\theoremstyle{plain}
\newtheorem{thm}{Theorem}[section] 
\newtheorem{lemma}[thm]{Lemma}     
\newtheorem{cor}[thm]{Corollary}
\newtheorem{prop}[thm]{Proposition}
\newtheorem{MainThm}{Theorem}
\theoremstyle{definition}
\newtheorem{defn}[thm]{Definition}
\newtheorem*{defn*}{Definition}
\newtheorem{fact}[thm]{Fact}
\newtheorem{claim}{Claim}[thm]
\newcommand{\Thfin}[1]{\mathrm{Th}_{\mathrm{fin}}(#1)}
\newcommand{\var}[1]{\mathcal{#1}}
\newcommand{\Th}[1]{\mathrm{Th}(#1)}
\newcommand{\alg}[1]{\boldsymbol{#1}}
\newcommand{\Con}[1]{\mathrm{Con}(#1)}
\newcommand{\Pol}[2]{\mathsf{Pol}_{#1}(#2)}
\newcommand{\HSP}[1]{\mathrm{HSP}(#1)}
\newcommand{\card}[1]{| #1 |}
\newcommand{\EQtwo}{\mathcal{E}_2}
\newcommand{\TC}[3]{\mathrm{C}(#1,#2;#3)}
\newcommand{\prectype}[1]{\stackrel{#1}{\prec}}
\newcommand{\typset}[1]{\mathrm{typ} \left\{ #1 \right\} }
\newcommand{\SSRad}[1]{\mathrm{Rad}_u(#1)}
\newcommand{\Rad}[1]{\mathrm{Rad}(#1)}
\newcommand{\aI}[1]{\mathbf{#1}}
\newcommand{\typ}[2]{\mathrm{typ}(#1,#2)}
\newcommand{\Cg}[3]{\mathrm{Cg}_{#1}(\langle #2,#3 \rangle)}
\newcommand{\Edge}{\stackrel{E}{\text{---}}}
\newcommand{\PolGrp}[2]{\mathfrak{S}_{#2}^{#1}}
\newcommand{\TwinGrp}[2]{\mathrm{T}_{#2}^{#1}}
\newcommand{\Eset}[3]{\mathrm{E}^{#1}(#2,#3)}
\newcommand{\HS}[1]{\mathrm{HS}(#1)}
\newcommand{\EQnum}{\arabic{section}.\arabic{thm}.\arabic{claim}}
\renewcommand{\bibname}{\textsc}
\renewcommand{\and}{\textup{and }}
\title[Residual finiteness of finitely decidable varieties]%
{Strong solvability and residual finiteness for finitely decidable varieties}
\author{Ralph McKenzie and Matthew Smedberg}
\address{Department of Mathematics, Vanderbilt University}
\email{rn.mckenzie@vanderbilt.edu}
\email{matthew.smedberg@vanderbilt.edu}
\begin{document}
\maketitle

\begin{abstract}
If \( \mathcal{V} \) is a finitely generated variety such that \( \Thfin{\mathcal{V}} \) is decidable, we show that \( \mathcal{V} \) is residually finite, and in fact has a finite bound on the sizes of subdirectly irreducible algebras. This result generalizes known results which assumed that \( \mathcal{V} \) has modular congruence lattices. Our proof of the theorem in its full generality proceeds by showing that strongly solvable radicals of algebras in \( \mathcal{V} \) are strongly abelian.
\end{abstract}

Let \( \var{V} \) be a class of mathematical structures. It is frequently of mathematical interest to compare the ``complexity'' of \( \var{V} \) along different scales, as a way of assessing the interaction of, for example, algebraic properties with logical ones.

In this investigation, we take up a question of this type: given a class which is reasonably simple from an algebraic perspective, how hard can algorithmic questions about this class be? To be more specific: we will consider classes \( \var{V} \) generated from a finite set of finite structures by applying a few standard algebraic operations, and ask whether either the class of all structures so generated (which is known to be computably axiomatizable) or the subclass consisting of just the finite structures, has decidable first-order theory. 

This problem in its full generality is still open, even assuming restrictive structural conditions on \( \var{V} \); however, we present here a surprisingly strong necessary condition for decidability of \( \Th{\var{V}_\mathrm{fin}} \), which eliminates a number of simplifying hypotheses from results obtained in the 1980s and 1990s.

We assume little background knowledge on the part of the reader; a nodding acquaintance with model- and computability-theoretic ideas and notations as might be encountered in a first graduate course in logic is the only true prerequisite. Our notation mostly follows contemporary texts like \cite{Rothmaler00}, \cite{Bergman11}.

The plan of the paper is as follows: In Section \ref{sec:Overview}, we lay out the notations and definitions needed for the investigation, after which, in Section \ref{sec:Results}, we can state our three main theorems. In Section \ref{sec:Comparable}, we prove Theorem \ref{MainThm:sigma comparable} and an important corollary (Corollary \ref{cor:sigma meet-irreducible}). Section \ref{sec:Abelian}, where Theorem \ref{MainThm:sigma strongly abelian} is proved, is the most difficult reading; by comparison, the proof of residual finiteness (Theorem \ref{MainThm:residual bound}) in Section \ref{sec:ResidualBound} is quick and transparent.

\section{Overview of fundamentals and preliminaries}\label{sec:Overview}
\setcounter{thm}{0}

\renewcommand{\theenumi}{\arabic{section}.\arabic{thm}.\roman{enumi}}

\subsection{Logical and algebraic definitions}

An \emph{algebra} is a first-order structure \( \alg{A} = \langle A; \cdots \rangle \) in a first-order language \( \mathcal{L} \) containing only function (and constant) symbols. The lattice of congruences of \( \alg{A} \) will be denoted \( \Con{\alg{A}} \). If \( \Con{\alg{A}} \) has a least nontrivial congruence \(\mu\), we call \( \alg{A} \) \emph{subdirectly irreducible} and \(\mu\) its \emph{monolith}. More generally, minimal nontrivial congruences are called \emph{atoms}.

A \emph{term operation} of \( \alg{A} \) is any finitary function \[ x_1, \ldots, x_n \mapsto t^{\alg{A}}(x_1, \ldots, x_n)\]on \(A\), for some \(\mathcal{L}\)-term \( t(v_1, \ldots, v_n) \). A \emph{polynomial operation} is a function \[ x_1, \ldots, x_k \mapsto t^{\alg{A}}(x_1, \ldots, x_k, a_{k+1}, \ldots, a_n)\]for some \( \mathcal{L}\)-term \(t\) and some elements \(a_i \in A\). The set of all polynomial operations of \(k\) or fewer variables is denoted \( \Pol{k}{\alg{A}} \). Unless otherwise specified, all first-order languages in this paper have only finitely many basic symbols, all of which are operations (or constants). (An important exception to this rule is the \emph{non-indexed algebras} described on page \pageref{defn:TCT}.)

The \emph{theory} of a first-order structure \( \alg{A} \) is the set of all \( \mathcal{L} \)-sentences true in \( \alg{A} \). If \( \mathcal{K} \) is a class of \( \mathcal{L} \)-structures, \( \Th{\mathcal{K}} \) is the set of all sentences true in all members of \( \mathcal{K} \). We write \( \mathcal{K}_\mathrm{fin} \) for the class of all finite members of \( \mathcal{K} \) and \( \Thfin{\mathcal{K}} \) for \( \Th{ \mathcal{K}_\mathrm{fin}} \). A class \(\mathcal{K} \) of \( \mathcal{L} \)-algebras is a \emph{variety} if it is axiomatized by some set of \emph{equations}, that is, sentences of the form \[ \forall \vec{v} \; t_1(\vec{v}) = t_2(\vec{v}) \]for some terms of the language. Equivalently, and more usefully for us, \( \mathcal{K} \) is a variety iff it is closed under taking direct products, subalgebras, and surjective homomorphic images. (Cf \cite{McKenzie_McNulty_Taylor}, \cite{Burris_Sanka}.) For a given algebra \( \alg{A} \) (resp. class \( \mathcal{K} \) of algebras) we denote the smallest variety containing it by \( \HSP{\alg{A}} \) (resp. \( \HSP{\mathcal{K}} \)).

If \( \var{V} \) is a variety and \( \kappa \) any cardinal, then \( \var{V} \) contains a free algebra on \( \kappa \) generators. If \( \kappa < \omega\), elements of this algebra are in canonical bijection (up to \(\Th{\var{V}}\)-equivalence) with \( \mathcal{L} \)-terms in \(\kappa\) variables.

\( \mathcal{V} \) is said to be \emph{residually \(\kappa\)} if for each \( \alg{A} \in \mathcal{V} \) and each \( a \neq b \in A \) there exists a homomorphism from \( \alg{A} \) onto a some algebra \( \alg{B} \) with \( \card{B} < \kappa\), separating \(a\) from \(b\). ``Residually \( \omega \)'' is usually called ``residually finite''. A \emph{residual bound} for \( \mathcal{V} \) is any cardinal \( \kappa \) such that \( \mathcal{V} \) is residually \( \kappa \). If every finitely generated \( \alg{A} \in \var{V} \) is finite, we say \( \var{V} \) is \emph{locally finite}.

For a given finite structure \( \alg{A} \), it is a trivial matter to determine whether a given first-order sentence holds in \( \alg{A} \); the same is not true for the problem of determining whether that same sentence holds throughout some variety containing \( \alg{A} \), such as \( \HSP{\alg{A}} \).

\begin{fact}\label{fact:defnFD}Let \( \alg{A} \) be any finite algebra, \( \mathcal{V} = \HSP{\alg{A}} \).
  \begin{enumerate}
    \item \( \mathcal{V} \) is locally finite and computably axiomatizable; it follows that \( \Th{\mathcal{V}} \) is computably enumerable. We will say that \( \mathcal{V} \) is \emph{decidable} if this set of sentences is computable.
    \item The \emph{complement} of \( \Thfin{\mathcal{V}} \), the set of all sentences \emph{falsified} in some finite member of \( \mathcal{V} \), is computably enumerable. We will say that \( \mathcal{V} \) is \emph{finitely decidable} if this set of sentences is computable.
  \end{enumerate}
\end{fact}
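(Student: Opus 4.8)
The plan is to extract both items from two classical ingredients: Birkhoff's HSP theorem and an explicit presentation of the free algebras of \( \var V = \HSP{\alg A} \).

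\emph{Computable axiomatizability and item (1).} By Birkhoff's theorem, \( \var V \) equals \( \mathrm{Mod}(\Sigma) \), where \( \Sigma \) is the set of all equations \( \forall\vec v\;\; s(\vec v) = t(\vec v) \) holding in \( \alg A \): the inclusion \( \var V \subseteq \mathrm{Mod}(\Sigma) \) is immediate because \( \mathrm H \), \( \mathrm S \), \( \mathrm P \) preserve equations, and the reverse inclusion is the substantive direction of Birkhoff. The crucial observation is that \( \Sigma \) is not merely computably enumerable but \emph{decidable}: since \( \alg A \) is finite, deciding whether \( \forall v_1\cdots v_k\;\; s = t \) holds in \( \alg A \) reduces to evaluating the term operations \( s^{\alg A} \) and \( t^{\alg A} \) on each of the \( \card A^{\,k} \) tuples over \( A \). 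So \( \var V \) is axiomatized by a decidable set of sentences, which is exactly what ``computably axiomatizable'' requires. For the c.e.\ statement about \( \Th{\var V} \): by the G\"odel completeness theorem \( \Th{\var V} = \Th{\mathrm{Mod}(\Sigma)} \) is precisely the set of first-order sentences formally derivable from \( \Sigma \), and the derivable consequences of a decidable premise set form a computably enumerable set (search through all formal proofs). For local finiteness, realize the free algebra \( \alg F_{\var V}(n) \) of \( \var V \) on \( n \) free generators in the usual way as (an isomorphic copy of) the subalgebra of the finite power \( \alg A^{A^n} \) generated by the \( n \) coordinate maps \( (a_1,\dots,a_n)\mapsto a_i \); then \( \card{\alg F_{\var V}(n)} \le \card{A}^{\,\card{A}^{\,n}} < \omega \), and since every \( n \)-generated member of \( \var V \) is a homomorphic image of \( \alg F_{\var V}(n) \), it too is finite.

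\emph{Item (2).} The idea is to enumerate the finite members of \( \var V \) effectively, up to isomorphism, and then run a halting search. From the presentation above, the operation tables of each \( \alg F_{\var V}(n) \) are computable outright from those of \( \alg A \); because \( \alg F_{\var V}(n) \) is finite, its congruence lattice is finite and computable, so one can enumerate all pairs \( (n,\theta) \) with \( \theta \in \Con{\alg F_{\var V}(n)} \) and form the quotients \( \alg F_{\var V}(n)/\theta \). By the local-finiteness argument, any finite algebra in \( \var V \) of cardinality at most \( m \) is a homomorphic image of \( \alg F_{\var V}(m) \), so the resulting list exhausts \( \var V_{\mathrm{fin}} \) up to isomorphism. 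Now, given a sentence \( \varphi \), walk this list; for each finite quotient \( \alg B \), decide --- a finite computation, \( \alg B \) being finite --- whether \( \alg B \models \neg\varphi \), and halt accepting \( \varphi \) the first time some \( \alg B \) does. This procedure halts on exactly the sentences falsified in some finite member of \( \var V \), so it computably enumerates the complement of \( \Thfin{\var V} \).

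I do not anticipate a genuine obstacle: this is a folklore fact and could reasonably be deferred to the references. The only points needing care are invoking Birkhoff in the sharp form \( \HSP{\alg A} = \mathrm{Mod}(\mathrm{Eq}(\alg A)) \), and making explicit that the free algebras \( \alg F_{\var V}(n) \), their congruence lattices, and hence all of \( \var V_{\mathrm{fin}} \), are effectively computable from the table of \( \alg A \). It is worth flagging the asymmetry the definition trades on: since the complement of \( \Thfin{\var V} \) is always c.e., ``\( \var V \) is finitely decidable'' is equivalent to ``\( \Thfin{\var V} \) is itself c.e.'', and this is the sense in which the main results of the paper can be read as obstructions to finite decidability. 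No analogous finite-witness argument is available for \( \Th{\var V} \), so decidability of the full theory is a genuinely separate matter.
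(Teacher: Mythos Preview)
Your argument is correct. The paper itself does not supply a proof of this fact: it is stated as background and implicitly deferred to standard references such as \cite{McKenzie_McNulty_Taylor} and \cite{Burris_Sanka}. Your write-up supplies exactly the folklore details one would expect---Birkhoff's theorem plus decidability of $\mathrm{Eq}(\alg A)$ for finite $\alg A$, the realization of $\alg F_{\var V}(n)$ inside $\alg A^{A^n}$ for local finiteness, and the effective enumeration of $\var V_{\mathrm{fin}}$ via quotients of free algebras---so there is nothing to compare against and nothing to correct.
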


There do exist finite algebras \( \alg{A} \) such that \( \HSP{\alg{A}} \) is undecidable and/or finitely undecidable. For example, by \cite{Zamyatin78}, any non-abelian finite group generates an undecidable variety; for many other instances of undecidable and/or finitely undecidable varieties, see \cite{Malcev60}, \cite{Ershov72}, \cite{Zamyatin76}, \cite{KIdziak86}, \cite{McKenzie_Valeriote}, \cite{K+PMIdziak88}, \cite{Idziak89i}, \cite{Idziak89ii}, and \cite{Jeong99}.

As the alert reader has seen in Fact \ref{fact:defnFD}, there is a fundamental asymmetry between decidability and finite decidability, as in the one case it is the set of provable sentences which is easily shown to be enumerable, while in the other it is the refutable sentences. This asymmetry is not just apparent: the two properties are in fact completely independent. Specific examples of the four possibilities are given in \cite{Szmielew55}, \cite{K+PMIdziak88}, \cite{Olshanskii91}, and \cite{Jeong99}.

The principal tool this investigation will employ in establishing undecidability is the method of interpretation. Very briefly, we will repeatedly establish that some class of structures \( \mathcal{K}\) in some first-order language is not finitely decidable by finding a ``uniformly definably isomorphic'' copy of a class \( \mathcal{G}_{\mathrm{fin}}\) in \(\mathcal{K}_{\mathrm{fin}}\), where \( \mathcal{G} \) is finitely axiomatizable and finitely undecidable. The reader is referred to standard texts \cite{Hodges} Chapter 5, \cite{Burris_Sanka} Section V.5, for more details. Observe that if an undecidable class \( \mathcal{G}_{\mathrm{fin}} \) interprets into \( \mathcal{K}_{\mathrm{fin}}\) as above, then not only \( \mathcal{K}\) but every class \(\mathcal{K}' \supset \mathcal{K}\) of structures in the language is finitely undecidable as well: we say that \( \mathcal{K} \) is \emph{hereditarily} finitely undecidable.

 The classes we will be interpreting will be the class of graphs and the class \( \EQtwo \), defined below. For this investigation, a \emph{graph} is a first-order structure \( \mathbb{G} = \langle V; E \rangle \), where \(E^{\mathbb{G}}\) is a symmetric, irreflexive binary relation. (It follows that graphs in our sense do not possess multiple edges between a single pair of vertices.) It was shown by Ershov and Rabin in the 1960s that graphs are both undecidable and finitely undecidable.

\( \EQtwo \)\label{defn:two equivalence relations} is the class of structures \( \alg{E} = \langle I; R_0, R_1 \rangle \) where each \(R_i \) is a binary predicate symbol whose interpretation in the structure is an equivalence relation on \(I\), such that \( R_0^{\alg{E}} \cap R_1^{\alg{E}} = \bot_I \). We will sometimes refer to \( \Th{\EQtwo} \) as the theory of two disjoint equivalence relations. Corollary 5.16 of \cite{Burris_Sanka} shows that the theory of this class is undecidable and finitely undecidable.

(In fact, it can be shown that for each of the above classes, \( \Th{\mathcal{K}} \) is \emph{computably inseparable} from the set of sentences finitely refutable in \( \mathcal{K} \); but we will not need this stronger property.)

\subsection{Abelian and solvable algebras and TCT}
Modern investigations in universal algebra are greatly aided by the linked toolboxes of the theory of solvable and strongly solvable algebras and congruences (see for example \cite{Freese_McKenzie}) and the ``tame congruence theory'' developed by the first author and David Hobby in \cite{Hobby_McKenzie}.

Let \( \alg{A} \) be any algebra, and \( \alpha, \beta, \gamma \) be congruences (or more generally, any binary relations) on \( A \). \( \alg{A} \) is said to satisfy the \emph{term condition} \( \TC{\alpha}{\beta}{\gamma} \) if the implication \begin{align*}
       t(\vec{a}_1,\vec{b}_1) &\equiv_\gamma t(\vec{a}_1, \vec{b}_2) \\ & \Downarrow \\
       t(\vec{a}_2,\vec{b}_1) &\equiv_\gamma t(\vec{a}_2, \vec{b}_2)
     \end{align*}is valid for all terms \(t\) and all tuples \( \vec{a}_1 \equiv_\alpha \vec{a}_2 \) and \( \vec{b}_1 \equiv_\beta \vec{b}_2 \). If \(R,S \subset A\), then we will write \( \TC{R}{S}{\gamma} \) when we mean \( \TC{R^2}{S^2}{\gamma} \). If \( \gamma \leq \beta \in \Con{\alg{A}}\) and \( \TC{\beta}{\beta}{\gamma} \), then we say that \( \beta \) is \emph{abelian} over \( \gamma \). If \( \TC{\beta}{\beta}{\bot_A} \) then we say that \( \beta \) is an abelian congruence. If \( \TC{\top_A}{\top_A}{\bot_A} \) then we say that \( \alg{A} \) is an abelian algebra.

\stepcounter{thm}
We can always transform a failure \begin{align*}
  t(\vec{a}_1, \vec{b}_1) &= t(\vec{a}_1, \vec{b}_2) \\ &\text{but} \\
  t(\vec{a}_2, \vec{b}_1) &\neq t(\vec{a}_2, \vec{b}_2)
\end{align*}of \( \TC{\alpha}{\beta}{\gamma} \) into one \begin{align*}
  s(a_1', \vec{b}_1') &= s(a_1', \vec{b}_2') \\ &\text{but} \tag{\thethm}\label{eq:TCsimplification}\\
  s(a_2', \vec{b}_1') &\neq s(a_2', \vec{b}_2')
\end{align*}where \(\alpha\)-shifting occurs in only one variable. The same is not true in general for the \(\beta\)-shifted variables; however, this is possible in the special case where all the elements in \( \vec{b}_1, \vec{b}_2\) are taken from some \(U \subset A \) such that every operation on \(U\) is realized by a polynomial of \( \alg{A} \). We leave the verification of this to the reader.

Another asymmetry between the roles played by the first two variables of the term condition has to do with congruence generation. If \(R\) is a binary relation on \(A\), then \( \TC{R}{\beta}{\gamma} \) holds iff \( \TC{\rho}{\beta}{\gamma} \) does, where \( \rho \) is the least congruence of \( \alg{A} \) identifying all the pairs in \(R \cup \gamma\). By comparison, \( \TC{\alpha}{R}{\gamma} \) holds iff \( \TC{\alpha}{\alg{S}}{\gamma} \), where \( \alg{S} \) is the reflexive, symmetric subalgebra of \( \alg{A}^2 \) generated by \(R\).

  If \( \gamma \leq \beta \in \Con{\alg{A}}\), we say that \( \beta \) satisfies the \emph{strong} term condition over \( \gamma \), or that \( \beta \) is \emph{strongly abelian} over \( \gamma \), if for all terms \(t\) and tuples \( \vec{a}_1 \equiv_\beta \vec{a}_2 \), \( \vec{b}_1 \equiv_\beta \vec{b}_2 \equiv_\beta \vec{b}_3 \), \begin{align*}
    t(\vec{a}_1, \vec{b}_1) &\equiv_\gamma t(\vec{a}_2, \vec{b}_2) \\ &\Downarrow \\
    t(\vec{a}_1, \vec{b}_3) &\equiv_\gamma t(\vec{a}_2, \vec{b}_3)
  \end{align*}If \( \TC{\beta}{\beta}{\gamma} \), this condition is equivalent to the apparently weaker condition \[ \vec{a}_1 \equiv_\beta \vec{a}_2 \: \& \: \vec{b}_1 \equiv_\beta \vec{b}_2 \: \& \: t(\vec{a}_1, \vec{b}_1) \equiv_\gamma t(\vec{a}_2, \vec{b}_2) \; \Rightarrow \; \forall i, j \; t(\vec{a}_1, \vec{b}_1) \equiv_\gamma t(\vec{a}_i, \vec{b}_j) \]which is easier to use.

If \( \alg{A} \) is a locally finite algebra and \( \alpha^- < \alpha^+ \in \Con{\alg{A}} \), we say that \( \alpha^+ \) is (strongly) \emph{solvable} over \( \alpha^- \) if every chain of congruences \[ \alpha^- = \beta_0 < \beta_1 < \cdots < \beta_{m-1} < \beta_m = \alpha^+ \]admits a refinement \[ \alpha^- = \gamma_0 < \gamma_1 < \cdots < \gamma_{n-1} < \gamma_n = \alpha^+ \]such that each \( \gamma_{i+1} \) is (strongly) abelian over \( \gamma_i \).

Let \( \alg{A} \) be a finite algebra and \( \alpha \prec \beta \) in \( \Con{\alg{A}} \) (that is, \( \beta \) is an upper cover of \( \alpha \) in the order-theoretic sense). For any subset \( W \subset A \), the \emph{non-indexed algebra}\label{defn:TCT} \( \alg{A}_{|W} \) \emph{induced by \(\alg{A}\) on \(W\)} is defined to have underlying set \(W\), and a basic operation \( f(v_1, \ldots, t_k) \) for each polynomial \(f \in \Pol{k}{\alg{A}} \) such that \( f(W^k) \subset W \). We do not usually wish to specify any more parsimonious signature for an induced algebra; even if the signature of \( \alg{A} \) was finite, \( \alg{A}_{|W} \) is not in general representable as a first-order structure in any finite language.

An \((\alpha,\beta)\)-\emph{minimal set} \(U \subset A\) is an inclusion-minimal polynomial image \(e(A)\) of the algebra, where \(e \in \Pol{1}{\alg{A}} \) is required to be \emph{idempotent} (\(e \circ e = e\)) and to preserve the \( \alpha \)-inequivalence of some pair \( \langle a,b \rangle \in \beta \setminus \alpha\). Clearly, every \((\alpha,\beta)\) minimal set has at least two elements. If \(U\) is \((\alpha,\beta)\)-minimal, a \(\beta_{|U}\)-class which properly contains two or more \( \alpha_{|U}\)-classes is called a \emph{trace}. The union of the traces included in \(U\) is called the \emph{body} of \(U\); the remainder is called the \emph{tail}.

\begin{thm}[(Fundamental Theorem of Tame Congruence Theory, \cite{Hobby_McKenzie} Theorem 2.8, Theorem 4.7, Lemma 4.8)]Let \( \alg{A} \) be a finite algebra with congruences \( \alpha \prec \beta \).
  \begin{enumerate}
    \item \label{enum:TCTtype wel defined} All \((\alpha,\beta)\)-minimal sets \(U_1, U_2\) are polynomially isomorphic, in the sense that there exists \(f \in \Pol{1}{\alg{A}} \) which maps \(U_1\) bijectively to \(U_2\) in such a way that every induced operation \[ t_2 \in U_2^{U_2^k} \]in the signature of \( \alg{A}_{|U_2}\) is the \(f\)-image of an operation \[ t_1 \in U_1^{U_1^k} \]in the signature of \( \alg{A}_{|U_1}\).
    \item Let \(N \subset U\) be any trace in an \((\alpha, \beta)\)-minimal set. If \( \alg{A}_{|N}/\alpha_{|N} \) is isomorphic to the two-element boolean algebra, the two-element lattice, or the two-element semilattice, then we say that the covering is of (respectively) boolean type (\(\alpha \prectype{3} \beta \)), lattice type (\( \alpha \prectype{4} \beta\)), or semilattice type (\( \alpha \prectype{5} \beta \)). (This is well-defined by (\ref{enum:TCTtype wel defined}).)
    \item If none of these possibilities occur, then \( \alg{A}_{|N}/\alpha_{|N} \) is an abelian algebra, and is either isomorphic to a finite module over some ring, in which case the cover is of affine type (\(\alpha \prectype{2} \beta\)); or isomorphic to a finite \(G\)-set for some finite group \(G\) (unary type, \(\alpha \prectype{1} \beta \)). In the former case, \( \beta \) is abelian over \( \alpha \) but not strongly abelian; in the latter, \( \beta \) is strongly abelian over \( \alpha \).
  \end{enumerate}
\end{thm}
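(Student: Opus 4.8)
The plan is to reduce the global statement to the structure of the algebra that \( \alg{A} \) induces on a single \( (\alpha,\beta) \)-minimal set, and then to invoke P\'alfy's classification of finite minimal algebras.

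I would begin with the tameness facts underlying part~(1). The key elementary observation is that if \( N \) is a trace in an \( (\alpha,\beta) \)-minimal set \( U \), then \( \mathrm{Cg}_{\alg A}(N^2) \vee \alpha = \beta \): the trace contains a pair from \( \beta \setminus \alpha \), and \( \beta \) covers \( \alpha \), so there is nothing strictly between. Now take two minimal sets \( U_1 = e_1(A) \), \( U_2 = e_2(A) \) with traces \( N_1, N_2 \). Connecting a pair of \( N_2 \) lying in \( \beta \setminus \alpha \) by a Mal'cev chain built from unary polynomial images of pairs in \( N_1 \), some link of the chain fails to be \( \alpha \)-related, so the polynomial \( p \) realizing that link separates two \( \alpha \)-classes of \( N_1 \). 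Composing \( p \) with \( e_2 \) and passing to an idempotent power yields, after one uses the inclusion-minimality of both \( U_1 \) and \( U_2 \) (together with the fact that a unary polynomial that is injective on a minimal set is inverted there by a polynomial), a polynomial bijection \( f \colon U_1 \to U_2 \); running the construction in the other direction and composing shows \( f \) is a polynomial isomorphism, and then any induced operation of \( \alg A_{|U_2} \) is seen to pull back through \( f \) to one of \( \alg A_{|U_1} \). This is~(1), and it also makes the type label defined below independent of all choices.

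For~(2) and the dichotomy opening~(3) I would fix a trace \( N \) and pass to \( \alg B := \alg A_{|N}/\alpha_{|N} \). First, \( \alg B \) is a \emph{minimal algebra} --- every unary polynomial of \( \alg B \) is a permutation or a constant --- since otherwise a suitable idempotent power of such a polynomial would be an idempotent polynomial of \( \alg A \) with strictly smaller range still separating a pair of \( \beta \setminus \alpha \), contradicting the minimality of \( U \). If \( \card{B} \ge 3 \), P\'alfy's theorem applies: a finite minimal algebra of size at least three is either polynomially equivalent to a module over a finite ring --- indeed a vector space over a finite field, the affine type (\( \alpha \prectype{2} \beta \)) --- or has every operation essentially unary, whence (the constants being present) it is term-equivalent to a finite \( G \)-set, the unary type (\( \alpha \prectype{1} \beta \)). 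If \( \card{B} = 2 \) I would instead run through the finitely many clones on a two-element set that contain both constants and verify, \emph{up to interchanging the two elements} --- which is precisely why the induced algebra must be treated without a fixed signature, since this identifies \( \wedge \) with \( \vee \) and the two orientations of the semilattice --- that each falls into one of the five listed cases.

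Finally, the abelian properties in~(3) follow from the explicit description of the affine and unary types. In the unary case every operation of \( \alg B \) depends on at most one variable, so \( t(\vec a_1, \vec b_1) \equiv_\gamma t(\vec a_2, \vec b_2) \) already forces \( t(\vec a_1, \vec b_3) \) and \( t(\vec a_2, \vec b_3) \) into the same \( \gamma \)-class; transporting this through the isomorphisms of~(1) and localizing a hypothetical strong-term-condition failure of \( \alg A \) into a trace shows \( \beta \) is strongly abelian over \( \alpha \). In the affine case the module identity makes \( t(\vec x, \vec y) - t(\vec x, \vec z) \) independent of \( \vec x \), giving abelianness, while the module addition itself violates the strong term condition in any nontrivial module --- e.g. over \( \mathrm{GF}(2) \) one has \( 0 + 0 = 1 + 1 \) but \( 0 + 0 \ne 1 + 0 \) --- and this failure again localizes back to \( \alg A \). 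I expect the main obstacle to be P\'alfy's classification of minimal algebras of size \( \ge 3 \), whose proof --- excluding every possibility except vector spaces and essentially-unary algebras --- is itself substantial; after that, the most delicate bookkeeping lies in the minimal-set manipulations of~(1) and in pushing term-condition statements back and forth between \( \alg B \) and the covering \( \alpha \prec \beta \) in \( \alg A \).
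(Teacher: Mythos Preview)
The paper does not give its own proof of this statement: it is stated as a background result and attributed, via the theorem header, to \cite{Hobby_McKenzie} (Theorem~2.8, Theorem~4.7, Lemma~4.8). There is therefore nothing in the paper to compare your proposal against.

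That said, your outline is broadly the standard argument from \cite{Hobby_McKenzie}: establish polynomial isomorphism of minimal sets by Mal'cev-chain arguments and minimality, observe that the induced algebra on a trace (modulo \(\alpha\)) is minimal, invoke P\'alfy's classification of finite minimal algebras, and then read off the abelian and strongly-abelian properties from the explicit descriptions of types~\(1\) and~\(2\). A few points of caution if you intend this as more than a sketch. First, in part~(1) the polynomial isomorphism is between the full minimal sets \(U_1, U_2\), not merely their traces; your chain argument as written produces a polynomial injection on a trace, and the step from there to a bijection \(U_1 \to U_2\) uses more of the machinery of \cite{Hobby_McKenzie} Chapter~2 (in particular the characterization of minimal sets via \((\alpha,\beta)\)-separating polynomials) than you have indicated. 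Second, the transfer of the (strong) term condition from the trace back to the full cover \(\alpha \prec \beta\) in \(\alg{A}\) is genuinely the content of \cite{Hobby_McKenzie} Chapter~5 and is not as immediate as ``localizing a hypothetical failure into a trace''; one needs that \(\beta\) is generated over \(\alpha\) by the traces and that the relevant term-condition properties propagate under that generation. These are real gaps in a full proof, though your identification of P\'alfy's theorem as the core ingredient is correct.
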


We will write \( \typset{\alg{A}} \subset \{1,2,3,4,5\}\) for the set of tame congruence types which appear in \( \Con{\alg{A}} \).

Let \(i \neq j \) be tame congruence types. We will say that the algebra \( \alg{A} \) satisfies the \emph{\( (i,j) \)-transfer principle} if, for all covering chains \[ \alpha_1 \prectype{i} \alpha_2 \prectype{j} \alpha_3 \]there exists \[ \alpha_1 \prectype{j} \beta_j \leq \alpha_3 \]and likewise \[ \alpha_1 \leq \beta_i \prectype{i} \alpha_3 \]

\begin{fact}\label{fact:basics}Let \( \var{V} \) be a finitely decidable variety.
  \begin{enumerate}
    \item\label{enum:basics45} \( \var{V} \) omits the lattice and semilattice tame congruence types.
    \item\label{enum:transfer} The (1,2), (2,1), (3,1), and (3,2) transfer principles hold throughout \(\var{V}\); in particular,
    \item If \( \alg{S} \in \var{V} \) is a finite subdirectly irreducible algebra with boolean-type monolith, then \( \typset{\alg{S}} = \{3\} \). If the monolith is affine, then \( \typset{\alg{S}} \subset \{2,3\} \), and if the monolith is unary, then \( \typset{\alg{S}} \subset \{1,3\} \).
    \item \label{enum:emptytail} If \( \alg{A} \in \var{V}\) and \( \alpha \prectype{2,3} \beta \), then all \( (\alpha,\beta)\)-minimal sets have no tail. In the boolean case, this means that each minimal set contains just two elements, and every possible operation from this set to itself is realized by a polynomial of the algebra.
  \end{enumerate}
\end{fact}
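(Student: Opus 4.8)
The plan is to treat Fact~\ref{fact:basics} as a consolidation of results from the literature on finitely decidable varieties---principally the work of Idziak and of Jeong cited above---supplemented by two short arguments internal to tame congruence theory. I would establish clauses (1) and (2) first, since clauses (3) and (4) then follow with comparatively little effort, and I would organize the proofs of (1) and (2) around the single technique that drives all such results, the method of interpretation: whenever the structural feature asserted fails, one builds, uniformly in a finite graph \(\mathbb{G}\) (or a finite member of \(\EQtwo\)), a finite algebra \(\alg{B}_{\mathbb{G}}\in\var{V}\) inside which the edge relation of \(\mathbb{G}\) (resp.\ the two disjoint equivalence relations) becomes first-order definable. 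Since the construction is uniform and preserves finiteness, \(\var{V}_{\mathrm{fin}}\) would then interpret a hereditarily finitely undecidable class, contradicting the hypothesis.

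For clause (1) I would suppose some finite \(\alg{A}\in\var{V}\) had a covering \(\alpha\prec\beta\) of lattice or semilattice type, pass to an \((\alpha,\beta)\)-minimal set \(U\) with a trace \(N\subset U\), so that (up to the induced operations) \(\alg{A}_{|N}/\alpha_{|N}\) carries a two-element lattice or semilattice, and then run the standard encoding: an idempotent operation behaving like meet and/or join is robust enough to express the adjacency relation of an arbitrary finite graph---or the two relations of a member of \(\EQtwo\)---inside a suitable finite member of \(\var{V}\). This is the argument of the cited papers, which I would reproduce rather than reinvent.

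For clause (2) I would run the same strategy starting from the \emph{failure} of a transfer principle. If, say, the \((2,1)\)-transfer principle failed, there would be a chain \(\alpha_1\prectype{2}\alpha_2\prectype{1}\alpha_3\) in some finite \(\alg{A}\in\var{V}\) admitting no type-\(1\) cover of \(\alpha_1\) below \(\alpha_3\); passing to the relevant minimal sets and induced algebras, one traps a module-like polynomial structure alongside a \(G\)-set-like one in a sufficiently entangled way that two disjoint definable equivalence relations appear---one resolving the affine coordinate, one the unary coordinate---whence an interpretation of \(\EQtwo\). The \((1,2)\), \((3,1)\) and \((3,2)\) cases are analogous, with the Boolean type contributing a fresh definable equivalence relation whenever its transfer downward past type~\(1\) or type~\(2\) fails. \textbf{I expect this clause to be the real obstacle:} in contrast to clause (1) the minimal sets here need not have two elements, so before the definable relations can be read off cleanly one must already have the empty-tail information of clause (4) in hand, and the polynomial bookkeeping needed to turn the configuration into honest first-order formulas is delicate. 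This is the content of \cite{KIdziak86,K+PMIdziak88,Idziak89i,Idziak89ii,Jeong99}, which I would cite rather than reconstruct.

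Finally, clauses (3) and (4) are deductions. For (3): given a finite subdirectly irreducible \(\alg{S}\in\var{V}\) with monolith \(\mu\) of type \(i\), clause (1) already gives \(\typset{\alg{S}}\subset\{1,2,3\}\); if some type \(j\in\{1,2\}\setminus\{i\}\) occurred, I would pick a covering of type \(j\), observe that it lies above \(\mu\) (otherwise it would itself be the monolith, of type \(j\neq i\)), connect \(\mu\) to it by a maximal chain of covers, and push a type-\(j\) cover down that chain using the four transfer principles of clause (2)---each step transfers a type from \(\{1,2\}\) and never type~\(3\), which is exactly why the four listed suffice and why type~\(3\) may persist in the spectrum---until a type-\(j\) cover sits directly above \(\mu\); one more transfer across \(\bot_S\prectype{i}\mu\prectype{j}(\cdot)\) then yields a type-\(j\) atom, contradicting the uniqueness of the monolith. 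This eliminates precisely the combinations not listed, leaving \(\{3\}\), \(\{2,3\}\) and \(\{1,3\}\). For (4): a nonempty tail in a type-\(2\) or type-\(3\) minimal set of a finite member of \(\var{V}\) again permits an interpretation of \(\EQtwo\) into \(\var{V}_{\mathrm{fin}}\) (cf.\ the cited works), so the tails are empty; in the Boolean case this forces the minimal set to consist of its single two-element trace, and then, the body being a full two-element Boolean algebra, every finitary operation on that set is induced by a polynomial of \(\alg{A}\).
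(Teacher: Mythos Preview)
Your overall architecture---cite the literature for clauses (1), (2), (4) and derive clause (3) from the transfer principles---is exactly what the paper does, and your explicit argument for (3), pushing a forbidden type-\(j\) cover down a maximal chain using the \((i,j)\) transfer principles with \(j\in\{1,2\}\), is correct and in fact more detailed than the paper's bare ``in particular''. So the mathematical plan is sound.

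The genuine gap is bibliographic, and for a Fact whose entire content is a consolidation of the literature, that matters. You attribute clause (2) (and implicitly clause (4)) to \cite{KIdziak86,K+PMIdziak88,Idziak89i,Idziak89ii,Jeong99}; none of those papers prove the transfer principles or the empty-tail property for general finitely decidable varieties---they are decidability analyses of specific varieties (Brouwerian semilattices, Heyting algebras) or examples separating decidability from finite decidability. The paper instead cites \cite{Hobby_McKenzie} Theorem~11.1 for clause (1), \cite{Valeriote_Willard} and \cite{Valeriote94} for clause (2), and \cite{Valeriote_Willard} again for clause (4). A reader following your references would not find the claimed proofs. A secondary wrinkle: you say the proof of clause (2) requires the empty-tail information of clause (4); in the actual literature (and in the paper's citation structure) both are established in \cite{Valeriote_Willard}, so there is no circularity, but your ordering of the dependencies does not reflect how the results are actually packaged.
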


\begin{proof}
  \eqref{enum:basics45} is proved in \cite{Hobby_McKenzie} Theorem 11.1; it is a consequence of the fact that (finite) graphs interpret semantically into each of \[ \HSP{\langle \{0,1\}; \land \rangle} \]and \[ \HSP{\langle \{0,1\}; \land, \lor \rangle} \]\eqref{enum:transfer} is proved in \cite{Valeriote_Willard} and \cite{Valeriote94}. \eqref{enum:emptytail} is also proved in \cite{Valeriote_Willard}.
\end{proof}

It follows by Theorem 8.5 of \cite{Hobby_McKenzie} that any locally finite, finitely decidable variety omitting the unary type is congruence-modular.

The following fact will be of use later in the paper:

\begin{thm}[\cite{Hobby_McKenzie} Chapter 7]\label{thm:SSsim is congruence}Let \( \alg{A} \) be any finite algebra.
  \begin{enumerate}
    \item Each of the relations
      \[ \alpha \stackrel{ss}{\sim} \beta \iff \alpha \text{ is connected to } \beta \text{ via covers of type 1} \]and
      \[ \alpha \stackrel{s}{\sim} \beta \iff \alpha \text{ is connected to } \beta \text{ via covers of types 1 and 2} \]is a lattice congruence of \( \Con{\alg{A}} \).

    \item If \( \alpha \leq \beta \) and \( \gamma \in \Con{\alg{A}} \) is any other congruence, and if the interval from \( \alpha \) to \( \beta \) contains only covers of type 1, then the same is true for each of the intervals \( \gamma \land \alpha \leq \gamma \land \beta\), \( \gamma \lor \alpha \leq \gamma \lor \beta \). 
  \end{enumerate}
\end{thm}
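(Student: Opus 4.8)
The plan is to prove (2) directly and then read off (1). Recall from the Fundamental Theorem of tame congruence theory that a prime quotient $\delta\prec\theta$ has type $1$ exactly when $\theta$ is strongly abelian over $\delta$; so saying that an interval $[\alpha,\beta]$ contains only covers of type $1$ amounts, along a maximal chain $\alpha=\rho_0\prec\dots\prec\rho_k=\beta$, to saying each $\rho_{i+1}$ is strongly abelian over $\rho_i$, i.e.\ that $\beta$ is strongly solvable over $\alpha$. For the meet half of (2) I would fix such a chain and verify that $\gamma\land\rho_{i+1}$ is strongly abelian over $\gamma\land\rho_i$ for every $i$, so that the chain $\gamma\land\alpha=\gamma\land\rho_0\le\dots\le\gamma\land\rho_k=\gamma\land\beta$ exhibits $\gamma\land\beta$ as strongly solvable over $\gamma\land\alpha$. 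The verification is a one-line calculation with the strong term condition: two tuples congruent modulo $\gamma\land\rho_{i+1}$ are congruent both modulo $\rho_{i+1}$ --- whence strong abelianness of $\rho_{i+1}$ over $\rho_i$ gives the desired conclusion modulo $\rho_i$ --- and modulo $\gamma$, where the conclusion is automatic since $\gamma$ is a congruence; intersecting yields the conclusion modulo $\gamma\land\rho_i$.

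Joining the same chain termwise with $\gamma$ reduces the join half of (2) to the single-cover statement: if $\delta\prec\theta$ has type $1$ and $\gamma$ is arbitrary, then $[\gamma\lor\delta,\gamma\lor\theta]$ contains only covers of type $1$. I expect this to be the \textbf{main obstacle}. Now $\gamma\lor\theta$ lies \emph{above} $\theta$, so the strong term condition of $\theta$ over $\delta$ no longer applies verbatim, and moreover $\gamma\lor\theta$ need not \emph{cover} $\gamma\lor\delta$, so one cannot merely invoke ``perspective prime quotients have equal type.'' My approach would be to refine $[\gamma\lor\delta,\gamma\lor\theta]$ to a maximal chain and show that every prime quotient $\mu\prec\nu$ on it has type $1$: a pair in $\nu\setminus\mu$ is joined by a path whose successive links alternate $\gamma$-steps and $\theta$-steps, and propagating a term operation along such a path lets the $\gamma$-links be absorbed modulo $\gamma$ (hence modulo $\nu$) while the $\theta$-links are controlled by the strong term condition of $\theta$ over $\delta$; the combinatorial heart --- essentially the minimal-set transfer machinery developed in \cite{Hobby_McKenzie} Chapter 6 --- is to show that a trace inside $[\gamma\lor\delta,\gamma\lor\theta]$ witnessing a cover of type other than $1$ would pull back to a trace inside the type-$1$ cover $[\delta,\theta]$ on which strong abelianness fails. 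Equivalently, one proves directly that the strongly solvable congruences are closed under the relative join encoded by $\stackrel{ss}{\sim}$.

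Granting (2), statement (1) is essentially formal. The relation $\alpha\stackrel{ss}{\sim}\beta$ is reflexive (the empty chain of covers), symmetric (reverse a chain) and transitive (concatenate chains) directly from its definition, and likewise $\stackrel{s}{\sim}$, so only compatibility with $\land$ and $\lor$ remains. If $\alpha\stackrel{ss}{\sim}\beta$ is witnessed by a zigzag $\alpha=\delta_0,\dots,\delta_n=\beta$ of type-$1$ covers, then (2) applied to each consecutive pair gives $\delta_i\land\gamma\stackrel{ss}{\sim}\delta_{i+1}\land\gamma$ and $\delta_i\lor\gamma\stackrel{ss}{\sim}\delta_{i+1}\lor\gamma$, and concatenating yields $\alpha\land\gamma\stackrel{ss}{\sim}\beta\land\gamma$ and $\alpha\lor\gamma\stackrel{ss}{\sim}\beta\lor\gamma$. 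The case of $\stackrel{s}{\sim}$ is identical once one has the obvious ``types $1$ and $2$'' analogue of (2): its meet half is the same term-condition calculation with ordinary abelianness in place of strong abelianness, and its join half is again the Chapter 6 minimal-set analysis.
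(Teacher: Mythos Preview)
The paper does not give its own proof of this statement: it is quoted verbatim as a background result from \cite{Hobby_McKenzie}, Chapter~7, and used thereafter without argument. So there is nothing in the paper to compare your attempt against.

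That said, your outline is a faithful sketch of how the Hobby--McKenzie proof actually goes. Your meet argument is correct as stated: the verification that $\gamma\land\rho_{i+1}$ is strongly abelian over $\gamma\land\rho_i$ really is the one-line term-condition calculation you describe, and chaining these gives strong solvability of $\gamma\land\beta$ over $\gamma\land\alpha$, hence all covers in that interval have type~$1$. You are also right that the join half is where the real content lies, and right to flag it as the obstacle: strong abelianness of $\theta$ over $\delta$ does \emph{not} directly yield strong abelianness of $\gamma\lor\theta$ over $\gamma\lor\delta$ by any term-condition manipulation, and the proof in \cite{Hobby_McKenzie} genuinely requires the minimal-set transfer results (perspectivity of prime quotients preserves type, Lemma~6.2 and Theorem~6.8 there) rather than the commutator calculus alone. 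Your derivation of (1) from (2) is routine and correct.
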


It follows that for every finite algebra \( \alg{A} \), the sets of congruences \( \stackrel{ss}{\sim} \)-equivalent (resp. \( \stackrel{s}{\sim} \)-equivalent) to \( \bot_A \) have largest elements, which we call the strongly solvable radical \( \SSRad{\alg{A}} \) and solvable radical \( \Rad{\alg{A}} \) of \( \alg{A} \).

\subsection{Powers and Subpowers}
Let \(\alg{A}\) be any algebra. A \emph{subpower} of \(\alg{A}\) is a subalgebra \( \alg{B} \leq \alg{A}^I\) for some index set \(I\). We use two notations for elements of powers and subpowers: the element with coordinate \(x^i\) at place \(i \in I\) may be denoted \( \aI{x} = \langle x^i \rangle_{i \in I} \); alternatively, if only a few elements \(a_1, a_2, \ldots\) of \(A\) appear as coordinates of \(\aI{x}\), we may instead use a direct sum notation \[ \aI{x} = {a_1}_{|I_1} \oplus {a_2}_{|I_2} \oplus \cdots\](where \(I_j\) is the set of indices where \(a_j\) appears). \( \alg{B} \leq \alg{A}^I \) is said to be \begin{itemize}
  \item \emph{subdirect} (notation: \(\alg{B} \leq_s \prod \alg{A}^I \)) if for each \(i \in I \) and each \(a \in A\) there exists \( \aI{x} \in B \) with \( x^i = a\), and
  \item\label{defn:diagonal subpower} \emph{diagonal} if for each \(a \in A\), the element \( \aI{a} = \langle a \rangle_{i \in I} \) belongs to \( B \). We will freely identify \(\alg{A}\) with its image under the diagonal embedding.
\end{itemize}

If \( \alg{A} \) is an algebra, \( U \subset A \), and \( \alg{B} \leq \alg{A}^I \), we will frequently be interested in subsets of the form \( U^I \cap B \). If the meaning is clear from context, we will usually abbreviate this to \( U^I \).

\begin{prop}\label{prop:power of minimal set}
  Let \( \alg{A} \) be any algebra, and let \( e \in \Pol{1}{\alg{A}} \) be idempotent (that is, \( e \circ e = e \)). Then if \( U = e(A) \), and if \( \alg{B} \leq \alg{A}^I\) is any diagonal subpower of \( \alg{A} \), then \( U^I \cap B \) is an \( A \)-definable subset of \(\alg{B}\).
\end{prop}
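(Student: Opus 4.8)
The plan is to observe that the idempotent polynomial \(e\), being a polynomial of \(\alg{A}\), lifts to a polynomial of the diagonal subpower \(\alg{B}\) which acts coordinatewise, and that \(U\) is precisely the fixed-point set of \(e\). Writing \(e(x) = t^{\alg{A}}(x, \vec{a})\) for an \(\mathcal{L}\)-term \(t\) and a parameter tuple \(\vec{a}\) from \(A\), the equation \(t(x, \vec{a}) = x\) then cuts out exactly \(U^I \cap B\) inside \(\alg{B}\), using only parameters from the diagonal copy of \(A\).

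First I would fix a term \(t(v, v_1, \ldots, v_n)\) and elements \(a_1, \ldots, a_n \in A\) with \(e(x) = t^{\alg{A}}(x, a_1, \ldots, a_n)\) for all \(x \in A\). Since \(\alg{B}\) is diagonal, each \(\aI{a_j} = \langle a_j \rangle_{i \in I}\) lies in \(B\), so \(\hat{e}(\aI{x}) := t^{\alg{B}}(\aI{x}, \aI{a_1}, \ldots, \aI{a_n})\) is a well-defined polynomial operation of \(\alg{B}\). Computing coordinatewise in \(\alg{A}^I\) gives
\[ \hat{e}(\aI{x}) = \langle t^{\alg{A}}(x^i, a_1, \ldots, a_n) \rangle_{i \in I} = \langle e(x^i) \rangle_{i \in I}. \]

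Next I would use idempotence of \(e\) to identify \(U\) with the fixed-point set of \(e\): since \(e \circ e = e\), every element of \(U = e(A)\) is fixed by \(e\), and conversely any fixed point of \(e\) lies in \(e(A)\). Hence for \(\aI{x} \in B\) we have \(\aI{x} \in U^I\) iff \(x^i = e(x^i)\) for every \(i \in I\), iff \(\hat{e}(\aI{x}) = \aI{x}\). Therefore
\[ U^I \cap B = \{ \aI{x} \in B : t^{\alg{B}}(\aI{x}, \aI{a_1}, \ldots, \aI{a_n}) = \aI{x} \}, \]
which exhibits \(U^I \cap B\) as the solution set in \(\alg{B}\) of an atomic \(\mathcal{L}\)-formula with parameters taken from the diagonal copy of \(A\), i.e. as an \(A\)-definable subset.

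There is no serious obstacle here; the only point requiring a moment's care is that the parameter tuple \(\vec{a}\) must be available inside \(\alg{B}\) for \(e\) to lift at all, which is exactly what the hypothesis that \(\alg{B}\) is a \emph{diagonal} subpower provides, and that the lift then acts coordinatewise, so that the single equation \(t(x,\vec a)=x\) over \(A\) simultaneously controls all coordinates.
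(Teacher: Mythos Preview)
Your proof is correct and follows exactly the same approach as the paper: lift \(e\) to a polynomial of \(\alg{B}\) using the diagonal parameters, and identify \(U^I \cap B\) as its fixed-point set. The paper's proof is just a two-sentence compression of what you wrote.
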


\begin{proof}
  Since \( \alg{B} \) contains the diagonal, the function \(\aI{e} = e^I \) is realized as a polynomial of \( \alg{B} \). \( U^I \cap B \) is the set of fixed points of this polynomial.
\end{proof}

Indeed, for any such diagonal subpower and for each \(k\), the map \begin{align*} \Pol{k}{\alg{A}} &\hookrightarrow \Pol{k}{\alg{B}} \\ f(v_1, \ldots, f_k) = t(v_1, \ldots, v_k, a_1, \ldots, a_\ell) &\mapsto t(v_1, \ldots, v_k, \aI{a}_1, \ldots, \aI{a}_\ell) = f^I \end{align*}is an embedding (of clones), which we will make continual use of.

\begin{lemma}\label{lemma:subpowers omit type 1}
  Let \( \alg{A}_i \), \(1 \leq i \leq p \) be finite algebras with trivial strongly solvable radical. Then every \[ \alg{B} \leq_s \prod_i \alg{A}_i \]has trivial strongly solvable radical.
\end{lemma}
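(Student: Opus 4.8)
The plan is to bound $\SSRad{\alg{B}}$ below the kernel of each coordinate projection and then intersect over the coordinates.

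First note that $\alg{B}$ is finite (it is a subalgebra of a finite product of finite algebras), so $\SSRad{\alg{B}}$ is defined. For $1 \leq i \leq p$ let $\eta_i \in \Con{\alg{B}}$ be the kernel of the projection $\alg{B} \to \alg{A}_i$. Since $\alg{B} \leq \prod_i \alg{A}_i$ we have $\bigwedge_i \eta_i = \bot_B$, and since $\alg{B}$ is subdirect each projection is onto, so $\alg{B}/\eta_i \cong \alg{A}_i$. Hence it is enough to show $\SSRad{\alg{B}} \leq \eta_i$ for every $i$.

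So fix $i$ and write $\sigma = \SSRad{\alg{B}}$. By the basic theory of the strongly solvable radical (\cite{Hobby_McKenzie}, Chapter 7, together with Theorem \ref{thm:SSsim is congruence}), the block of $\bot_B$ under $\stackrel{ss}{\sim}$ is the interval $[\bot_B, \sigma]$, and every prime quotient inside it has type $1$. I would then invoke Theorem \ref{thm:SSsim is congruence}(2) with $\gamma = \eta_i$: it gives that the interval $[\eta_i, \eta_i \vee \sigma] = [\eta_i \vee \bot_B, \eta_i \vee \sigma]$ again contains only covers of type $1$. Passing to the quotient, the canonical map $\alg{B} \to \alg{B}/\eta_i \cong \alg{A}_i$ restricts to a lattice isomorphism $[\eta_i, \top_B] \cong \Con{\alg{A}_i}$ under which the type of each prime quotient is preserved (\cite{Hobby_McKenzie}); it carries $[\eta_i, \eta_i \vee \sigma]$ onto $[\bot_{A_i}, \theta]$ with $\theta = (\eta_i \vee \sigma)/\eta_i$. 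Thus $[\bot_{A_i}, \theta]$ consists of type-$1$ covers only, so $\theta \stackrel{ss}{\sim} \bot_{A_i}$ and therefore $\theta \leq \SSRad{\alg{A}_i} = \bot_{A_i}$ by hypothesis. This forces $\eta_i \vee \sigma = \eta_i$, i.e.\ $\sigma \leq \eta_i$. Intersecting over $i$ yields $\SSRad{\alg{B}} = \sigma \leq \bigwedge_i \eta_i = \bot_B$.

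The one essential ingredient is Theorem \ref{thm:SSsim is congruence}(2), which is exactly what propagates the ``all covers of type $1$'' property from $[\bot_B, \sigma]$ across the join with $\eta_i$ and hence down into $\alg{A}_i$; everything else is routine tame-congruence bookkeeping. I expect the only point needing genuine care is the use of subdirectness: it is what guarantees that $\alg{B}/\eta_i$ is all of $\alg{A}_i$ rather than a proper subalgebra, and the conclusion really needs this, since a proper subalgebra of an algebra with trivial strongly solvable radical can acquire new congruences (and new types) and fail to have trivial radical itself.
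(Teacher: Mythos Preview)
Your proof is correct and follows essentially the same approach as the paper: both use Theorem~\ref{thm:SSsim is congruence} to push the type-$1$ property from $[\bot_B,\sigma]$ (or from a type-$1$ atom, in the paper's contrapositive phrasing) across the join with a projection kernel $\eta_i$, and then read off a contradiction with $\SSRad{\alg{A}_i}=\bot$. The paper argues by contrapositive from a single type-$1$ atom while you argue directly that $\sigma\leq\eta_i$ for every $i$, but the content is the same.
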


\begin{proof}
  We show the contrapositive: suppose that \( \bot_B \prectype{1} \alpha \) is an atom of \( \Con{\alg{B}} \). Then there is some projection congruence \( \eta_j \) such that \( \alpha \lor \eta_j > \eta_j \). By Theorem \ref{thm:SSsim is congruence}, since \( \bot_B \stackrel{ss}{\sim} \alpha \), \( \eta_j \stackrel{ss}{\sim} \alpha \lor \eta_j \); it follows that the strongly solvable radical of \( \alg{A}_j \) sits above \( \alpha \lor \eta_j \).
\end{proof}

  If \( \alg{A}_1, \ldots, \alg{A}_p, \alg{B} \) are as in the previous Lemma, and all belong to some finitely decidable variety, then we can conclude (via the transfer principles) that in fact \( \alg{B} \) has no unary-type covers anywhere in its congruence lattice. This remains true if we introduce finitely many constant symbols in such a way that each element of each \( A_i \) is named by at least one constant symbol; call these expansions \( \langle \alg{A}_i; A_i \rangle \). Lemma \ref{lemma:subpowers omit type 1} implies that \( \HSP{\{ \langle \alg{A}_i; A_i\rangle \}_{i=1}^p} \) is modular (since all minimal sets will have empty tails), and so has Day (or Gumm) terms.

  In particular, if we are considering a fixed finite \( \alg{B} \leq_s \prod_i \alg{A}_i \), we may introduce constant symbols for each element of \( B \) and interpret them in the \( \alg{A}_i \) via their coordinate projections. Then \( \langle \alg{B}; B \rangle \) has Day terms, which become Day polynomials when we reduct back out to the original language. It follows that all the nice properties of congruence-modular varieties, such as most of the theory of commutators, hold for \( \alg{B} \).

It is an open problem whether the finite decidability of \[ \HSP{\alg{A}_1, \ldots, \alg{A}_p} \] implies the finite decidability of \[ \HSP{\langle \alg{A}_1; A_1 \rangle, \ldots, \langle \alg{A}_p; A_p \rangle} \]The best we can say is that the latter variety must be \(\omega\)-structured, in the sense of \cite{McKenzie_Valeriote}.

\subsection{Main Results}\label{sec:Results}
Let \( \var{V} \) be a fixed locally finite, finitely decidable variety. In \cite{Idziak97}, it is shown that

\begin{thm}\label{thm:type2_abelian_centralizer}
  If \(\alg{S}\) is a subdirectly irreducible algebra in \(\var{V}\) with monolith \(\mu\) such that \(\typ{\bot}{\mu} = 2\), then the centralizer of \(\mu\) is an abelian congruence, and is in fact the solvable radical \(\Rad{\alg{S}}\). Moreover, every congruence of \( \alg{S} \) is comparable to \(\Rad{\alg{S}}\).
\end{thm}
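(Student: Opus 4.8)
\textit{Setup.} It suffices to treat finite \( \alg{S} \) (the general case localizes to the finite subalgebras of \( \alg{S} \), using local finiteness of \( \var{V} \)). Since \( \typ{\bot}{\mu}=2 \), Fact \ref{fact:basics} gives \( \typset{\alg{S}} \subseteq \{2,3\} \), so \( \alg{S} \) has no type-\(1\) covers and \( \SSRad{\alg{S}} = \bot \); by the discussion following Lemma \ref{lemma:subpowers omit type 1}, \( \langle \alg{S}; S \rangle \) then has Day terms, whence \( \alg{S} \) has Day polynomials, \( \Con{\alg{S}} \) is modular, and the modular commutator \( [\alpha,\beta] \) is available with its usual monotonicity, symmetry and join-distributivity. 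Let \( \delta \) be the centralizer of \( \mu \), i.e. the largest congruence with \( [\delta,\mu] = \bot \). As \( \mu \) is abelian, \( [\mu,\mu] = \bot \), so \( \mu \leq \delta \); and since \( \mu \) is the monolith, every nontrivial congruence contains \( \mu \), so \( \delta \) lies in \( [\mu,\top] = \Con{\alg{S}} \setminus \{\bot\} \). Three assertions remain: \( \delta = \Rad{\alg{S}} \); \( \delta \) is abelian; every congruence is comparable to \( \delta \). At bottom each is proved by excluding a polynomial configuration in \( \alg{S} \) that would furnish a uniformly definable copy of \( {\EQtwo}_{\mathrm{fin}} \) (or of finite graphs) inside \( \var{V}_{\mathrm{fin}} \), contradicting finite decidability; the construction of these interpretations is the hard part.

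\textit{Step 1: \( \delta = \Rad{\alg{S}} \).} For \( \delta \leq \Rad{\alg{S}} \) I would show that no prime quotient inside \( [\bot,\delta] \) has type \(3\); as type \(1\) is also absent, \( [\bot,\delta] \) is then all type-\(2\), so \( \bot \stackrel{s}{\sim} \delta \) (Theorem \ref{thm:SSsim is congruence}) and \( \delta \leq \Rad{\alg{S}} \). If \( \alpha \prectype{3} \beta \leq \delta \), then \( \alpha \neq \bot \) (otherwise \( \beta = \mu \), which has type \(2\)), so \( \mu \leq \alpha \), and \( \beta \leq \delta \) centralizes \( \mu \); by the empty-tail property (Fact \ref{fact:basics}) the \( (\alpha,\beta) \)-minimal sets are two-element and carry every self-map as a polynomial, and it is the interplay of such a Boolean minimal set with the affine, non-strongly-abelian trace of the centralized monolith that one converts into an interpretation of \( {\EQtwo}_{\mathrm{fin}} \). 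For \( \Rad{\alg{S}} \leq \delta \), suppose not and take a prime quotient \( \sigma \prec \tau \leq \Rad{\alg{S}} \) with \( \sigma \leq \delta \), \( \tau \not\leq \delta \); this cover has type \(2\), while \( \tau \not\leq \delta \) forces \( [\tau,\mu] = \mu \) (since \( [\tau,\mu] \neq \bot \), \( [\tau,\mu] \leq \tau \land \mu = \mu \), and \( \mu \) is an atom) — a solvable congruence acting nontrivially on the affine module \( \mu \); combined with the \( (1,2) \)- and \( (2,1) \)-transfer principles (Fact \ref{fact:basics}) this again yields a forbidden interpretation. Hence \( \delta = \Rad{\alg{S}} \).

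\textit{Step 2: \( \delta \) is abelian.} Now \( \delta = \Rad{\alg{S}} \) is solvable. Suppose \( [\delta,\delta] > \bot \); then \( \mu \leq [\delta,\delta] \leq \delta \), and since \( [\delta,\mu] = \bot \) the monolith lies in the centre \( \zeta \) of \( \delta \). One shows — again invoking finite decidability — that \( \delta \) is nilpotent of class at most \(2\), i.e. \( [\delta,\delta] \leq \zeta \); the commutator then induces a nontrivial bilinear pairing \( \delta/\zeta \times \delta/\zeta \to [\delta,\delta] \) valued in an affine, non-strongly-abelian type-\(2\) trace below \( \mu \), and such a pairing is the standard source of an interpretation of \( {\EQtwo}_{\mathrm{fin}} \). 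Therefore \( [\delta,\delta] = \bot \).

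\textit{Step 3: comparability.} Using the \( (3,2) \)-transfer principle one checks that no type-\(2\) cover sits immediately above \( \delta = \Rad{\alg{S}} \) (such a cover, by \( \bot \stackrel{s}{\sim} \delta \), would lie below \( \Rad{\alg{S}} \)); inducting upward, every cover with bottom \( \geq \delta \) has type \(3\), while every cover inside \( [\bot,\delta] \) has type \(2\). Since \( \Con{\alg{S}} \) is modular and omits type \(1\), being of type \(2\) (resp.\ type \(3\)) is invariant under perspectivity of prime quotients. If comparability failed, a short lattice argument would produce a \( \theta \) incomparable to \( \delta \) whose unique lower cover is \( \theta_0 = \theta \land \delta < \delta \); then in the height-two interval \( [\theta_0, \theta\lor\delta] \) the type-\(3\) edge \( [\theta_0,\theta] \) (perspective to \( [\delta,\theta\lor\delta] \)) abuts the type-\(2\) edge \( [\theta_0,\delta] \), and this must be ruled out. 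It is not a purely lattice-theoretic contradiction — the \( (2,3) \)-transfer principle is not among those available — so it calls for one more interpretation argument, exploiting the module structure of the abelian radical \( \delta \) and the subdirect irreducibility of \( \alg{S} \), or for a direct analysis of \( \alg{S} \) as an extension of the type-\(3\) quotient \( \alg{S}/\delta \) by \( \delta \). I expect this final contradiction, together with the constructions of the interpretations of \( {\EQtwo}_{\mathrm{fin}} \) in the first two steps, to be where essentially all the difficulty of the theorem lies.
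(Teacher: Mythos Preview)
The paper does not prove this theorem: it is stated as a known result of Idziak, with the attribution ``In \cite{Idziak97}, it is shown that'' immediately preceding the statement. There is therefore no proof in the paper to compare your proposal against; the theorem serves here only as context and motivation for Theorems~\ref{MainThm:sigma comparable} and~\ref{MainThm:sigma strongly abelian}, which are the type-$1$ analogues that the paper actually proves.

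Your outline is in the right spirit --- the original proof in \cite{Idziak97} does proceed through interpretation arguments of the general kind you describe --- but what you have written is a proof \emph{plan} rather than a proof. At each of the three steps you correctly identify the configuration to be excluded and then defer the actual interpretation construction (``this again yields a forbidden interpretation'', ``the standard source of an interpretation'', ``calls for one more interpretation argument''). Those constructions are the entire content of the theorem, and nothing you have written constrains how they would go. Two places in particular are doing unearned work: in Step~2 the assertion that finite decidability forces $\delta$ to be nilpotent of class at most~$2$ before you extract a bilinear pairing is itself a substantial intermediate result, not a consequence of anything you have established; and in Step~1 the claim that a type-$3$ cover below the centralizer interacting with the affine monolith yields an $\EQtwo$ interpretation needs an explicit construction, not merely the observation that both ingredients are present. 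If you want to see the shape such constructions take, the proofs of Lemma~\ref{lemma:disjoint centralizer} and Corollary~\ref{cor:sigma meet-irreducible} in this paper are the closest in-paper analogues.
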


Our first two main theorems generalize this result:

\begin{MainThm}\label{MainThm:sigma comparable}
  If \( \alg{S} \in \var{V} \) is a finite subdirectly irreducible algebra, then the strongly solvable radical \( \SSRad{\alg{S}} \) is comparable to every congruence on \( \alg{S} \).
\end{MainThm}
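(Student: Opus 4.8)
The plan is to argue by contradiction: assume some $\gamma \in \Con{\alg{S}}$ is incomparable with $\sigma := \SSRad{\alg{S}}$, and produce a configuration forbidden in a finitely decidable variety. First the easy reductions. If $\sigma = \bot_S$ then $\sigma \le \gamma$, so we may assume $\sigma > \bot_S$; then the monolith $\mu$ of $\alg{S}$ lies below $\sigma$, and because the interval $[\bot_S,\sigma]$ consists only of covers of type~$1$ we get $\typ{\bot_S}{\mu} = 1$, whence by Fact~\ref{fact:basics} $\typset{\alg{S}} \subseteq \{1,3\}$, i.e.\ every prime quotient of $\Con{\alg{S}}$ has type $1$ or $3$. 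Since $\gamma \ne \bot_S$ we also have $\mu \le \gamma$. Writing $\delta = \gamma \land \sigma$ and $\rho = \gamma \lor \sigma$, we obtain $\mu \le \delta < \gamma,\sigma < \rho$.

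Next I would pin down the types of the covers flanking $\delta$. The interval $[\delta,\sigma]$ is a subinterval of $[\bot_S,\sigma]$, so it too consists entirely of type-$1$ covers; in particular every congruence $\le \sigma$ is $\stackrel{ss}{\sim}\bot_S$. Applying Theorem~\ref{thm:SSsim is congruence}(2) to the all-type-$1$ interval $[\bot_S,\sigma]$ and the congruence $\gamma$ shows that $[\gamma,\rho]$ is all type-$1$ as well. On the other side, if $\epsilon$ is any atom of the interval $[\delta,\gamma]$, then $\epsilon \land \sigma = \delta$ (otherwise $\epsilon \le \gamma \land \sigma = \delta$), so $\epsilon \not\le \sigma = \SSRad{\alg{S}}$; hence $\delta \prectype{1} \epsilon$ is impossible, for it would give $\epsilon \stackrel{ss}{\sim} \delta \stackrel{ss}{\sim} \bot_S$ and therefore $\epsilon \le \SSRad{\alg{S}}$. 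Thus $\delta \prectype{3} \epsilon$. Fix such an $\epsilon \le \gamma$, and fix also an atom $\epsilon' \le \sigma$ of $[\delta,\sigma]$, so that $\delta \prectype{1} \epsilon'$; since $\epsilon' \le \sigma$ while $\epsilon \not\le \sigma$, we have $\epsilon \ne \epsilon'$.

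It remains to contradict the coexistence, over the common lower bound $\delta$, of the boolean-type cover $\delta \prectype{3} \epsilon \le \gamma$ and the unary-type cover $\delta \prectype{1} \epsilon' \le \sigma$; this is the step I expect to carry essentially all the weight of Section~\ref{sec:Comparable}, and it is the main obstacle. The ingredients I would assemble: (i) by Fact~\ref{fact:basics} the $(\delta,\epsilon)$-minimal sets have empty tail, so a $(\delta,\epsilon)$-minimal set is a two-element set $U = \{0,1\}$ with $\langle 0,1 \rangle \in \epsilon \setminus \delta$ (hence $\delta_{|U}$ trivial) on which \emph{every} operation $U^k \to U$ is the restriction of a polynomial of $\alg{S}$, so that $\alg{S}_{|U}$ is the full two-element boolean algebra, which is non-abelian; (ii) since $\epsilon \land \sigma = \delta$ we have $0 \not\equiv_\sigma 1$, so $\sigma_{|U}$ is trivial; (iii) a $(\delta,\epsilon')$-minimal set carries a trace $M$ on which $\alg{S}_{|M}/\delta_{|M}$ is a $G$-set, and a pair $\langle p,q \rangle \in \epsilon' \setminus \delta$ with $p,q \in M$ and $p \equiv_\sigma q$; and (iv) subdirect irreducibility forces $\mu \le \Cg{\alg{S}}{0}{1}$ (this congruence being nontrivial), so that the pair $\langle 0,1 \rangle$ can be carried by a polynomial of $\alg{S}$ onto a non-identical pair lying inside a $\mu$-trace. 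The aim is then to splice a non-abelian boolean polynomial living on $U$ together with the unary $G$-set behaviour near $M$ and the monolith, and thereby manufacture inside $\Con{\alg{S}}$ either a cover of semilattice or of lattice type, contradicting that $\var{V}$ omits those types (Fact~\ref{fact:basics}), or else a failure of one of the transfer principles of Fact~\ref{fact:basics}. Producing the single polynomial term that simultaneously witnesses the boolean behaviour on $U$ and the unary behaviour near $M$ is the crux; once any such forbidden configuration is exhibited, the incomparability hypothesis collapses and Theorem~\ref{MainThm:sigma comparable} follows.
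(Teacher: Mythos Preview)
Your reductions are clean and the configuration you isolate --- a single $\delta$ carrying both a boolean upper cover $\delta \prectype{3} \epsilon$ (with two-element minimal set $\{0,1\}$ supporting all boolean operations) and a unary upper cover $\delta \prectype{1} \epsilon'$ --- is exactly the right starting point, and matches the paper's setup. The gap is in what you do with it.

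The approach you sketch for the ``crux'' is not going to work, and is not what the paper does. You propose to manufacture, inside $\Con{\alg{S}}$ itself, either a type-$4$/$5$ cover or a violation of a transfer principle. But you have already argued $\typset{\alg{S}} \subseteq \{1,3\}$, and TCT types are intrinsic: no amount of splicing polynomials will conjure a semilattice or lattice cover where none exists. Likewise the transfer principles concern \emph{chains} $\alpha_1 \prec \alpha_2 \prec \alpha_3$, whereas you have two covers sharing a common bottom; nothing in Fact~\ref{fact:basics} forbids that bare configuration. So the contradiction cannot be extracted locally from $\Con{\alg{S}}$.

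The paper's argument is of a completely different character: it is an \emph{interpretation} argument, producing hereditary finite undecidability rather than a local TCT obstruction. The missing ingredients are (a) Lemma~\ref{lemma:zeta solvable} (the centralizer of the monolith is strongly solvable, hence $\{0,1\}$ does \emph{not} centralize $\mu$), (b) the technical Lemmas~\ref{lemma:binary witnesses}--\ref{lemma:!C(K,N) or !C(N,K)} manufacturing a binary polynomial witness to a centralizer failure between $\{0,1\}$ and a $(\bot,\mu)$-trace, and (c) Lemma~\ref{lemma:disjoint centralizer}, which shows by interpreting finite graphs into diagonal subpowers of $\alg{S}$ that the centralizer of $\{0,1\}$ must be trivial. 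With the centralizer trivial, the paper then runs a second graph interpretation (again in diagonal subpowers) using the two sets of generators $\chi^\beta_i$ and $\chi^\alpha_e$; Mar\'oti's Lemma controls how elements of $U^I$ can arise. None of this stays inside $\alg{S}$: the contradiction is with finite decidability of $\HSP{\alg{S}}$, not with any structural fact about $\Con{\alg{S}}$ alone.
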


\begin{MainThm}\label{MainThm:sigma strongly abelian}
  If \( \alg{S} \in \var{V} \) is a finite subdirectly irreducible algebra with unary-type monolith \(\mu\), then the centralizer of \(\mu\) equals \( \SSRad{\alg{S}} \) (which must also be \( \Rad{\alg{S}} \)), and is a strongly abelian congruence.
\end{MainThm}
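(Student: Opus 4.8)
Write $\sigma:=\SSRad{\alg S}$, and let $\zeta$ be the centralizer of $\mu$, i.e.\ the largest congruence $\alpha$ with $\TC{\alpha}{\mu}{\bot_S}$. The plan is to establish three things: $\sigma=\Rad{\alg S}$; $\sigma=\zeta$; and $\sigma$ is a strongly abelian congruence. The first is immediate. Since $\typ{\bot_S}{\mu}=1$, Fact~\ref{fact:basics} gives $\typset{\alg S}\subseteq\{1,3\}$, so $\alg S$ has no prime quotient of type $2$; hence the relations $\stackrel{ss}{\sim}$ and $\stackrel{s}{\sim}$ on $\Con{\alg S}$ coincide, their $\bot_S$-classes are the same set, and therefore $\SSRad{\alg S}=\Rad{\alg S}=\sigma$. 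Moreover $\bot_S\prectype{1}\mu$ puts $\mu$ in the $\stackrel{ss}{\sim}$-class of $\bot_S$, so $\mu\le\sigma$; and $\mu$ is strongly abelian over $\bot_S$, in particular abelian, so $\mu\le\zeta$ as well.

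The heart of the matter, and what I expect to be the main obstacle, is the third claim: it is precisely the assertion (proved in Section~\ref{sec:Abelian}, and for arbitrary finite algebras of $\var V$, not only subdirectly irreducible ones) on which the abstract rests. I would argue by contradiction with finite decidability. Suppose the strong term condition fails for $\SSRad{\alg A}$ over $\bot_A$ in some finite $\alg A\in\var V$. Using the reduction~\eqref{eq:TCsimplification} and tame congruence theory, one localizes the witnessing data to a single type-$1$ cover $\delta\prec\delta'$ inside $[\bot_A,\SSRad{\alg A}]$ (an interval all of whose prime quotients have type $1$), to a $\langle\delta,\delta'\rangle$-trace $N$ on which the induced algebra modulo $\delta$ is a finite $G$-set, and to a term $t$ realizing the failure with its $\delta$-shifted variable ranging over $N$. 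The goal is then to build, inside a suitably chosen finite member of $\var V$ --- obtained by passing to a diagonal subpower of $\alg A$ and naming every element by a constant, as in Section~\ref{sec:Overview} --- a uniformly first-order-definable copy of the class $\EQtwo$ of two disjoint equivalence relations (or, failing that, of the class of graphs), contradicting the finite undecidability of that class. Two difficulties I foresee: promoting a failure local to one trace into one visible at the top of the radical (this should invoke the $(3,1)$ transfer principle and the fine structure of $[\bot_A,\SSRad{\alg A}]$), and keeping the interpreted structure genuinely first order and uniform over the family of finite algebras being constructed.

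Granting that $\sigma$ is strongly abelian, the second claim follows. One inclusion is formal: $\TC{\sigma}{\sigma}{\bot_S}$ holds, hence so does $\TC{\sigma}{\mu}{\bot_S}$ since $\mu\le\sigma$, i.e.\ $\sigma$ centralizes $\mu$ and thus $\sigma\le\zeta$. For $\zeta\le\sigma$ I would invoke Theorem~\ref{MainThm:sigma comparable}, which makes $\sigma$ comparable with $\zeta$, so that it suffices to rule out $\sigma<\zeta$. By Corollary~\ref{cor:sigma meet-irreducible}, $\sigma$ is meet-irreducible with a unique upper cover $\sigma^{+}$; if $\sigma<\zeta$ then $\sigma^{+}\le\zeta$, while the cover $\sigma\prec\sigma^{+}$ is not of type $1$ (by maximality of $\sigma$ in its $\stackrel{ss}{\sim}$-class), nor of type $2$, $4$ or $5$ ($\var V$ omits the last two, and $\alg S$ has no type-$2$ quotient), hence has type $3$. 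By Fact~\ref{fact:basics}, a two-element $\langle\sigma,\sigma^{+}\rangle$-minimal set $U=\{p,q\}$ then realizes every operation $U^{k}\to U$ as a polynomial of $\alg S$, and $\TC{\sigma^{+}}{\sigma^{+}}{\sigma}$ fails; since $\mu$ is the monolith, $\Cg{\alg S}{p}{q}\ge\mu$, and the remaining step --- which I expect to be delicate --- is to convert this into a failure of $\TC{\sigma^{+}}{\mu}{\bot_S}$, contradicting $\sigma^{+}\le\zeta$. This descent is the strongly-solvable counterpart of the type-$2$ reasoning underlying Theorem~\ref{thm:type2_abelian_centralizer}, and, like the strong abelianness of $\sigma$, it relies on the particular behaviour of type-$1$ and type-$3$ covers available in a finitely decidable variety.
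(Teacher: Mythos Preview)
Your argument for $\sigma=\Rad{\alg S}$ is correct. For $\zeta\le\sigma$ you are working much too hard: Lemma~\ref{lemma:zeta solvable} (quoted from \cite{Idziak_Valeriote01}) already asserts that the centralizer of a unary-type monolith in a finitely decidable variety is strongly solvable, so $\zeta\le\sigma$ is immediate from the definition of the radical --- no descent from a type-$3$ cover, and no appeal to Theorem~\ref{MainThm:sigma comparable} or Corollary~\ref{cor:sigma meet-irreducible}, is needed here. The reverse inclusion $\sigma\le\zeta$ does follow from abelianness of $\sigma$, as you note.

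The real gap is in your plan for strong abelianness. The paper does not attack the strong term condition directly. It first proves that $\sigma$ is \emph{abelian} (Lemma~\ref{lemma:sigma abelian}): a minimal counterexample is subdirectly irreducible with $\TC{\sigma}{\sigma}{\mu}$, and three successive interpretation lemmas (Lemmas~\ref{lemma:sigma centralizes minimal sets}, \ref{lemma:twins act trivially}, \ref{lemma:C(sigma,sigma;bot)}) eliminate the remaining failure modes --- respectively $\neg\TC{\sigma}{\mu_{|U}}{\bot}$, a nontrivial action of the twin group $\TwinGrp{\alg S}{U}$ on a trace, and $\neg\TC{\sigma}{\sigma}{\bot}$ given both $\TC{\sigma}{\mu}{\bot}$ and $\TC{\mu}{\sigma}{\bot}$ --- each by interpreting graphs into diagonal subpowers (sometimes modulo an auxiliary congruence $\Theta$). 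Only then is strong abelianness obtained, via a purely algebraic bootstrap (Lemma~\ref{lemma:nonSA implies nonA}): an abelian but not strongly abelian strongly solvable congruence on $\alg F$ produces a non-abelian strongly solvable congruence in $\HS{\alg F^2}$, contradicting the abelian case just established. Your single-step ``localize a strong-term-condition failure to one type-$1$ cover and interpret $\EQtwo$'' conflates these two stages and does not match how the localization actually proceeds; the constructions hinge on $(\bot,\mu)$-minimal sets and the groups $\PolGrp{\alg S}{U}$, $\TwinGrp{\alg S}{U}$, not on a generic cover $\delta\prec\delta'$ inside the radical.
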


The third main theorem gives a strong structural critereon for finite decidability:

\begin{MainThm}\label{MainThm:residual bound}
  If \(\var{V}\) is any finitely decidable, finitely generated variety, then \( \var{V} \) has a finite residual bound; in particular, all algebras in \( \var{V} \) are residually finite.
\end{MainThm}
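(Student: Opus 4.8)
The plan is to reduce the statement to a bound — uniform in the fixed finite language of \( \var{V} \) — on the cardinalities of finite subdirectly irreducible members of \( \var{V} \), and then to produce such a bound by analysing the type of the monolith of a finite subdirectly irreducible \( \alg{S}\in\var{V} \). Theorems \ref{MainThm:sigma comparable} and \ref{MainThm:sigma strongly abelian} enter only when the monolith has unary type, which is exactly the configuration lying outside the congruence-modular techniques of the earlier work on finite decidability.

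The reduction from all subdirectly irreducibles to the finite ones is a routine compactness argument. Suppose every finite subdirectly irreducible in \( \var{V} \) has at most \(N\) elements; since the language is finite there are, up to isomorphism, only finitely many algebras of size \(\leq N\), say \( \alg{K}_1,\dots,\alg{K}_m \). Let \( \alg{S}\in\var{V} \) be subdirectly irreducible with monolith \( \mu=\Cg{\alg{S}}{a}{b} \). Every finite subalgebra of \( \alg{S} \) containing \(a,b\) admits a homomorphism onto a subdirectly irreducible quotient of size \(\leq N\) separating \(a\) from \(b\), hence a homomorphism separating \(a\) from \(b\) into one of the \( \alg{K}_i \); and the set of finite subalgebras admitting such a homomorphism into a fixed \( \alg{K}_i \) is closed under passing to subalgebras. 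As there are only finitely many \( \alg{K}_i \), and the finite subalgebras of \( \alg{S} \) containing \(a,b\) form a directed family, one of these sets must be the whole family, so every finite subalgebra of \( \alg{S} \) maps into a fixed \( \alg{K}_{i_0} \) separating \(a\) from \(b\); a compactness argument then yields such a homomorphism \( \alg{S}\to\alg{K}_{i_0} \). Since \( \mu \) is the monolith this homomorphism is injective, so \( \card{S}\leq N \), and \( \var{V} \) is residually \(N^{+}\).

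It remains to bound finite subdirectly irreducibles. Let \( \alg{S}\in\var{V} \) be finite and subdirectly irreducible with monolith \( \mu \); by Fact \ref{fact:basics}\eqref{enum:basics45}, \( \typ{\bot}{\mu}\in\{1,2,3\} \). If \( \typ{\bot}{\mu}\neq 1 \), then \( \SSRad{\alg{S}}=\bot_S \) — for a subdirectly irreducible algebra the strongly solvable radical is nontrivial exactly when the monolith has type \(1\) — and hence, by Lemma \ref{lemma:subpowers omit type 1} and the observation following it, \( \alg{S} \), its subpowers, and their quotients all omit the unary type; so \( \alg{S} \) lies in the congruence-modular regime in which the prior analysis of finitely decidable varieties (\cite{Idziak89i}, \cite{Idziak89ii}, \cite{McKenzie_Valeriote}) applies and bounds \( \card{S} \) uniformly in the language, that analysis interpreting an undecidable class as soon as a subdirectly irreducible grows past a threshold set by the signature alone. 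There remains the case \( \typ{\bot}{\mu}=1 \). Here Theorem \ref{MainThm:sigma strongly abelian} identifies \( \sigma:=\SSRad{\alg{S}} \) with \( \Rad{\alg{S}} \) and with the centralizer of \( \mu \), and asserts that \( \sigma \) is strongly abelian; Theorem \ref{MainThm:sigma comparable} asserts that \( \sigma \) is comparable to every congruence of \( \alg{S} \). Since \( \sigma \) is simultaneously the strongly solvable and the solvable radical, no cover just above it has type \(1\) or \(2\); propagating this upward through the \((3,1)\)- and \((3,2)\)-transfer principles of Fact \ref{fact:basics}\eqref{enum:transfer} shows that the interval \( [\sigma,\top_S] \) of \( \Con{\alg{S}} \) consists entirely of type-\(3\) covers, so \( \alg{S}/\sigma \) has trivial strongly solvable radical and sits in a congruence-modular, finitely decidable regime. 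The picture is therefore that \( \alg{S} \) is a strongly abelian radical \( \sigma \) — which, being strongly abelian, carries on and among its blocks the structure of a multisorted set acted on by finitely many unary operations — sitting beneath a congruence-modular quotient. One then argues, and this is where finite decidability is used essentially, that such an \( \alg{S} \) of unbounded size would permit a uniform interpretation, inside \( \var{V}_\mathrm{fin} \), of the finitely-undecidable class \( \EQtwo \) of two disjoint equivalence relations; the resulting contradiction bounds \( \card{S} \).

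The step I expect to be the genuine obstacle is this last one. Two things require real work there: quantitative control of the strongly abelian radical (the interpretation of \( \EQtwo \) out of large strongly abelian blocks), and control of \( \alg{S}/\sigma \) — a finitely decidable congruence-modular variety bounds only its \emph{subdirectly irreducible} members, not its finite algebras at large, so one must extract from the subdirect irreducibility of \( \alg{S} \), together with the comparability supplied by Theorem \ref{MainThm:sigma comparable}, enough information about \( \Con{\alg{S}} \) above \( \sigma \) to carry the bound through. Theorems \ref{MainThm:sigma comparable} and \ref{MainThm:sigma strongly abelian} are precisely what make this possible: Theorem \ref{MainThm:sigma strongly abelian} upgrades ``strongly solvable'' to ``strongly abelian'' and identifies the radical with the centralizer of the monolith, forcing \( \alg{S} \) into the stratified shape above, while Theorem \ref{MainThm:sigma comparable} guarantees that this stratification is a genuine interval decomposition of the congruence lattice rather than something more tangled. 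Dropping these two inputs is exactly what kept earlier residual-finiteness results confined to the congruence-modular case.
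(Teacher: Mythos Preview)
Your overall scaffold---reduce to finite subdirectly irreducibles, then split on the type of the monolith---matches the paper. The compactness reduction is fine and equivalent to the Quackenbush argument the paper cites. But both of your case analyses have genuine gaps.

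For monoliths of type \(2\) or \(3\), you defer to ``the prior analysis of finitely decidable varieties'' in the congruence-modular setting. This move does not go through: even when \( \SSRad{\alg{S}} = \bot \), the variety \( \HSP{\alg{S}} \) need not be congruence-modular, and whether it inherits finite decidability from \( \var{V} \) is explicitly flagged in the paper as an open problem (see the paragraph after Lemma~\ref{lemma:subpowers omit type 1}). The paper instead proves direct bounds: Lemma~\ref{lemma:type 3 bound} shows that a boolean-monolith SI already lies in \( \HS{\var{K}} \) by a short argument with a \( (\pi,\mu) \)-minimal set, and Lemma~\ref{lemma:type 2 bound} handles the affine case by first using Lemma~\ref{lemma:subpowers omit type 1} to obtain Day polynomials on a minimal representation \( \alg{B} \leq_s \prod \alg{A}_i \), then applying \cite{Freese_McKenzie} Theorem~10.1 to bound \( \card{\alg{S}/\zeta} \) and a module-theoretic argument (\cite{Kearnes91}) to bound the \( \zeta \)-blocks.

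For the unary-monolith case, your setup is correct: \( \sigma = \SSRad{\alg{S}} \) is strongly abelian by Theorem~\ref{MainThm:sigma strongly abelian}, and Corollary~\ref{cor:sigma meet-irreducible} makes \( \alg{S}/\sigma \) subdirectly irreducible with boolean monolith, so Lemma~\ref{lemma:type 3 bound} bounds \( \ell = \card{\alg{S}/\sigma} \). But the bound on \( \sigma \)-blocks is \emph{not} obtained by interpreting \( \EQtwo \); no further interpretation is needed once Theorems~\ref{MainThm:sigma comparable} and \ref{MainThm:sigma strongly abelian} are in hand. The paper gives a pure counting argument. Lemma~\ref{lemma:few independent variables} shows that strong abelianness of \( \sigma \) forces any polynomial \( t(v_0, \vec{s}_1, \ldots, \vec{s}_\ell) \) to depend, modulo \( \sigma \), on at most \( M = \log \card{\alg{F}_{\var{V}}(\ell+2)} \) variables from each \( \sigma \)-block. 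Then Lemma~\ref{lemma:type 1 bound} associates to each \( b \) in a \( \sigma \)-block \( B \) the set \( G(b) \subset \alg{F}_{\var{V}}(1 + \ell M) \) of terms that can send \( b \) to a fixed monolith element \( c \); the strong term condition makes \( b \mapsto G(b) \) injective, so \( \card{B} \leq 2^{\card{\alg{F}_{\var{V}}(1+\ell M)}} \). Your proposed interpretation route is not obviously wrong, but you have not indicated how to carry it out, and the paper's counting argument is both simpler and complete.
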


\section{On comparability of the strongly solvable radical}\label{sec:Comparable}
\setcounter{thm}{0}

The first section will be devoted to proving Theorem \ref{MainThm:sigma comparable}. We begin with two old facts that will be useful.

\begin{lemma}\label{lemma:zeta solvable}
  Let \( \alg{S} \) be a subdirectly irreducible algebra in a finitely decidable variety with unary-type monolith \(\mu\). Then the centralizer of \(\mu\), the greatest congruence \( \zeta \) such that \( \TC{\zeta}{\mu}{\bot} \), is strongly solvable.
\end{lemma}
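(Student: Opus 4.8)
The plan is to analyze the covers that appear inside the interval $[\bot_S, \zeta]$ of $\Con{\alg S}$ and show that all of them must be of unary type, since that is precisely what ``strongly solvable'' means by the definition on page \pageref{thm:SSsim is congruence}. First I would invoke Fact \ref{fact:basics}\eqref{enum:basics45}, which already eliminates types $4$ and $5$ from anywhere in $\Con{\alg S}$; so the only danger is a cover $\alpha \prectype{2} \beta$ or $\alpha \prectype{3} \beta$ occurring with $\bot_S \le \alpha \prec \beta \le \zeta$. The strategy is to derive a contradiction with the hypothesis that $\zeta$ centralizes the unary-type monolith $\mu$, i.e.\ with $\TC{\zeta}{\mu}{\bot_S}$.

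The key observation is that $\mu$, being the monolith, lies below every nontrivial congruence, in particular $\mu \le \beta$; and $\mu$ centralizes $\zeta \ge \beta$, so by symmetry considerations and the fact that $\TC{\zeta}{\mu}{\bot}$ forces $\mu$ to behave ``like a module/$G$-set direction'' relative to everything below $\zeta$. More concretely, I would argue as follows. Suppose $\alpha \prectype{2,3} \beta$ with $\beta \le \zeta$. Since $\mu \le \beta$ and the cover $\alpha \prec \beta$ is of type $2$ or $3$, one can look at an $(\alpha,\beta)$-minimal set $U$; by Fact \ref{fact:basics}\eqref{enum:emptytail} $U$ has empty tail, and its body carries either an affine or a boolean structure modulo $\alpha$. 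The centralizer hypothesis $\TC{\zeta}{\mu}{\bot}$, restricted to $U$, should be incompatible with the presence of the two-element Boolean algebra or a nontrivial module as $\alg A_{|N}/\alpha_{|N}$: the term operation witnessing a pseudo-meet (boolean case) or a nontrivial linear operation (affine case) on the trace $N$ can be combined with the monolith pair $\langle a,b\rangle \in \mu$ sitting inside (or mapped into) $U$ to produce a term $t$ violating $\TC{\zeta}{\mu}{\bot}$ — because shifting a $\zeta$-related coordinate changes the output off of $\bot$ while a $\mu$-shift in another coordinate is not absorbed. This is essentially the standard fact (Hobby–McKenzie, Chapter 4) that a type-$2$ or type-$3$ prime quotient below a congruence $\beta$ cannot be centralized by $\beta$ in a way compatible with a unary-type atom underneath; I would cite the relevant minimal-set machinery rather than rebuild it.

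Once every cover in $[\bot_S,\zeta]$ is forced to be of type $1$, the definition preceding the statement of $\SSRad{\alg S}$ gives $\bot_S \stackrel{ss}{\sim} \zeta$, hence $\zeta \le \SSRad{\alg S}$, and since the interval consists of type-$1$ covers the congruence $\zeta$ is strongly solvable over $\bot_S$, which is the assertion. The main obstacle I anticipate is the case analysis at a single type-$2$ or type-$3$ cover: one must carefully transport the monolith pair into an $(\alpha,\beta)$-minimal set $U$ (using that $\mu \le \beta$ and the polynomial-isomorphism clause of the Fundamental Theorem), choose the right idempotent polynomial and the right term from the induced structure on $U$, and verify that the resulting term condition failure genuinely involves a $\zeta$-shift and a $\mu$-shift in disjoint variable blocks — the bookkeeping in \eqref{eq:TCsimplification} about which shifts can be concentrated in one variable will be needed here. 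I would expect this to be where the real work lies; the passage from ``only type-$1$ covers below $\zeta$'' to the conclusion is immediate.
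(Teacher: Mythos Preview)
The paper does not prove this lemma at all: its entire proof is a one-line citation of \cite{Idziak_Valeriote01}, Theorem~4. So there is nothing to compare your argument to within the paper itself; what matters is whether your sketch could stand in for the Idziak--Valeriote argument.

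It cannot, as written. Your plan is to take a putative cover $\alpha \prectype{2,3} \beta$ with $\beta \le \zeta$, look at an $(\alpha,\beta)$-minimal set $U$, and combine the boolean or affine structure on $U$ with a monolith pair to produce a direct failure of $\TC{\zeta}{\mu}{\bot}$. The problem is that no such local contradiction exists in general: the mere presence of a nonabelian or affine prime quotient below $\zeta$ does \emph{not}, by itself, force $\zeta$ to fail to centralize $\mu$. Concretely, once $\alpha$ is nontrivial we have $\mu \le \alpha$, so any $\mu$-pair you try to transport into an $(\alpha,\beta)$-minimal set lands inside a single $\alpha$-class and is invisible to the induced boolean or module structure on the trace; your suggestion to ``map the monolith pair into $U$'' therefore yields nothing useful. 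The phrase ``this is essentially the standard fact (Hobby--McKenzie, Chapter~4)'' is not accurate --- there is no such off-the-shelf statement that a type-$2$ or type-$3$ prime quotient below $\zeta$ contradicts $\TC{\zeta}{\mu}{\bot}$.

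The Idziak--Valeriote proof is of a different character entirely: it does not derive an internal centralizer contradiction but instead shows that if $\zeta$ were not strongly solvable, one could semantically interpret a hereditarily finitely undecidable class into $\HSP{\alg S}$, contradicting finite decidability. That is the same flavor of argument as Lemmas~\ref{lemma:disjoint centralizer}, \ref{lemma:sigma centralizes minimal sets}, and \ref{lemma:C(sigma,sigma;bot)} later in this paper. If you want a self-contained proof here, you would need to build an interpretation, not a term-condition violation; your current outline does not contain that idea.
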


\begin{proof}
	This is proved in \cite{Idziak_Valeriote01}, Theorem 4.
\end{proof}

\begin{lemma}\label{lemma:Maroti} Let \( \alg{A} \) be a finite algebra with \( \bot_A \prectype{1} \delta \), and let \(U \) be \((\bot,\delta)\)-minimal.
  \begin{enumerate}
  \item If \( D_1, \ldots, D_k \) are \(\delta\)-classes, then every mapping \[ f: D_1 \times \cdots \times D_k \rightarrow U\](where \(f \in \Pol{k}{\alg{A}} \)) depends on no more than one of its variables.
  \item {\rm (Maroti's Lemma)} If \(\delta \leq \beta\) in \(\Con{\alg{A}}\) and \(\TC{\beta}{\delta_{|U}}{\bot}\), and \( B_1, \ldots, B_k \) are \(\beta \)-classes, then for every mapping \[ f: B_1 \times B_2 \times \cdots \times B_k \rightarrow U \](where \(f \in \Pol{k}{\alg{A}} \)) there exists \(1 \leq j \leq k\) so that \[\vec{x} \equiv_\delta \vec{y}\text{ and }x_j = y_j \Rightarrow f(\vec{x}) = f(\vec{y})\]
  \end{enumerate}
\end{lemma}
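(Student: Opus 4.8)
The plan is to derive part~(1) from the \(G\)-set structure of type-\(1\) traces, and then to bootstrap part~(2) --- the lemma proper --- from part~(1), using \(\TC{\beta}{\delta_{|U}}{\bot}\) as a device that propagates coordinate-dependence of \(f\) across the various \(\delta\)-classes lying inside one \(\beta\)-class.

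\emph{Part (1).} Given \(f\in\Pol{k}{\alg A}\) and \(\delta\)-classes \(D_1,\dots,D_k\) with \(f(\prod_l D_l)\subseteq U\): since \(\delta\) is a congruence, \(f(\prod_l D_l)\) lies in one \(\delta\)-class, hence --- being also inside \(U\) --- in one \(\delta_{|U}\)-class, which is either a singleton (so \(f\) is constant on \(\prod_l D_l\), and we are done) or a trace \(N\). If \(f\) depended on two of its variables on \(\prod_l D_l\), then, freezing the other coordinates, we would get a binary polynomial \(g\colon D_1\times D_2\to N\) depending on both arguments; but \(\bot_A\prectype{1}\delta\), so by the Fundamental Theorem \(\alg A_{|N}\) is polynomially equivalent to a finite \(G\)-set, every operation of which depends on at most one variable. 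The only remaining step is to fabricate from \(g\) a genuine binary operation of \(\alg A_{|N}\), which one does by using minimality of \(U\) together with the equality \(\delta=\Cg{\alg A}{a}{a'}\) (any distinct \(a,a'\in D_1\)) to produce unary polynomials folding \(D_1,D_2\) onto the trace compatibly with \(g\). This is classical, and I would cite \cite{Hobby_McKenzie}, Ch.~4, rather than reprove it.

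\emph{Part (2).} Call coordinate \(i\) \emph{active} if some \(\vec x\equiv_\delta\vec y\) in \(\prod_l B_l\), agreeing off coordinate \(i\), has \(f(\vec x)\neq f(\vec y)\); the claim is exactly that at most one coordinate is active. Applying part~(1) to the restriction of \(f\) to any \emph{slice} \(\prod_l D_l\) (\(D_l\subseteq B_l\) a \(\delta\)-class) shows that at most one coordinate is active \emph{within each slice}, so it suffices to prove a \emph{propagation} principle: if \(i\) is active via a witness in a slice \(\sigma\), and \(\sigma'\) agrees with \(\sigma\) in all coordinates but one, say coordinate \(c\neq i\), then \(i\) is active via a witness in \(\sigma'\). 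Granting this, if \(i\neq j\) were both active, witnessed in \(\sigma\) and \(\sigma'\) respectively, let \(\tau\) be the slice agreeing with \(\sigma\) in coordinate \(i\) and with \(\sigma'\) elsewhere: walking \(\sigma\to\tau\) changes only coordinates \(\neq i\), so repeated propagation keeps \(i\) active, while walking \(\sigma'\to\tau\) changes only coordinate \(i\neq j\), so \(j\) stays active. Then the restriction of \(f\) to \(\tau\) has two active coordinates, contradicting part~(1), and we are done.

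\emph{Propagation, and the main obstacle.} Let \(\vec x,\vec y\in\sigma\) witness activeness of \(i\); then \(x_i\neq y_i\) and \(x_i\equiv_\delta y_i\). Freezing all coordinates except \(i,c\) at their \(\vec x\)-values gives a polynomial \(g(v,u)\) mapping a product of a \(\beta\)-class (coordinate \(c\)) with a \(\delta\)-class (coordinate \(i\)) into \(U\), with \(g(x_c,x_i)=f(\vec x)\neq f(\vec y)=g(x_c,y_i)\); I want \(g(w,x_i)\neq g(w,y_i)\) for each \(w\) in the coordinate-\(c\) \(\delta\)-class of \(\sigma'\), noting \(w\equiv_\beta x_c\). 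Since the term condition is symmetric in its two \(\beta\)-related tuples, \(\TC{\beta}{\delta_{|U}}{\bot}\) applied with coordinate \(c\) as the \(\beta\)-shifted block and coordinate \(i\) as the \(\delta_{|U}\)-shifted block would yield the equivalence \(g(x_c,x_i)=g(x_c,y_i)\iff g(w,x_i)=g(w,y_i)\), and the failure of the left-hand side forces the failure of the right, producing the required witness in \(\sigma'\). The sticking point --- the step I expect to be the main obstacle --- is that \(\TC{\beta}{\delta_{|U}}{\bot}\) is deliberately weak: it constrains only term-condition failures whose \(\delta\)-shifted arguments already lie inside the minimal set \(U\), whereas \(x_i,y_i\) merely lie in a \(\beta\)-class of \(\alg A\). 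Bridging this gap --- replacing \(g\) by its composite with a polynomial carrying a \(\delta\)-pair of some \((\bot,\delta)\)-minimal set onto \(\{x_i,y_i\}\) (available because \(\delta\) is an atom), together with the fact that all \((\bot,\delta)\)-minimal sets are polynomially isomorphic so that \(\TC{\beta}{\delta_{|U}}{\bot}\) transfers to each of them; or, if no single such polynomial exists, threading the comparison along a Mal'cev chain for \(\langle x_i,y_i\rangle\in\Cg{\alg A}{n_0}{n_1}\) with \(n_0,n_1\) in a trace --- is the genuinely delicate heart of the argument. The slice-chaining combinatorics and the appeal to part~(1) are routine by comparison, and part~(1) itself is standard.
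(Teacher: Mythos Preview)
The paper does not actually prove this lemma: its ``proof'' consists entirely of citations --- part~(2) to Lemma~7.2 of \cite{IMV_Few_Models}, and part~(1) either as a special case of part~(2) or to \cite{Hobby_McKenzie}, Theorem~5.6. So there is no argument in the paper to compare yours against; you have supplied substantially more than the authors did.

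Your sketch is essentially correct and follows the natural line. Part~(1) is indeed classical, and the paper agrees, pointing to Hobby--McKenzie. For part~(2), your reformulation (``at most one active coordinate'') is equivalent to the stated conclusion, the slice-and-propagate combinatorics are sound, and the obstacle you flag --- that the \(\delta\)-shifted arguments \(x_i,y_i\) need not lie in \(U\) --- is the genuine one. Your proposed fix via a Mal'cev chain works cleanly: pick \(n_0 \neq n_1\) in a trace \(N \subseteq U\), so that \(\delta = \Cg{\alg{A}}{n_0}{n_1}\); for each link \(\{a_l, a_{l+1}\} = \{p_l(n_0), p_l(n_1)\}\) in the chain from \(x_i\) to \(y_i\), set \(h_l(v,n) = g(v, p_l(n))\). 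Now the \(\delta\)-shifted arguments \(n_0,n_1\) lie in \(U\), so \(\TC{\beta}{\delta_{|U}}{\bot}\) applies directly and (by the symmetry you invoke) gives \(g(x_c,a_l) = g(x_c,a_{l+1}) \iff g(w,a_l) = g(w,a_{l+1})\). Since \(g(x_c,x_i) \neq g(x_c,y_i)\), some link on the left fails, hence the corresponding link on the right fails, producing a witness \(a_l \neq a_{l+1}\) for activeness of \(i\) in \(\sigma'\). Note that propagation only requires \emph{some} witness in \(\sigma'\), not the original pair \(x_i,y_i\), so this suffices. One minor simplification: since you may take \(n_0,n_1\) inside the given \(U\) from the start, the remark about transferring \(\TC{\beta}{\delta_{|U}}{\bot}\) to other minimal sets is not needed.
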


\begin{proof}
  The second statement is Lemma 7.2 of \cite{IMV_Few_Models}; the first statement is a special case of the second (or can be proved independently, as in \cite{Hobby_McKenzie} Theorem 5.6).
\end{proof}

\begin{defn}[(\cite{Kearnes93} Definition 4.1)]\label{defn:coherent}
  Let \( \alpha \prec \beta \) be a congruence cover of the finite algebra \( \alg{A} \), and let \( \gamma \in \Con{\alg{A}} \). Let \(T\) denote the set of all \( (\alpha,\beta) \)-traces in \( \alg{A} \). We say that \( (\alpha,\beta) \) is \emph{\( \gamma \)-coherent} if \[ \bigwith_{N \in T} \TC{\gamma}{\beta_{|N}}{\alpha} \quad \Longrightarrow \quad \TC{\gamma}{\beta}{\alpha} \]If \( \alpha = \bot \) then we will say that \( \beta \) is \(\gamma\)-coherent. Note, that since all \( (\alpha,\beta) \)-traces are polynomially isomorphic, \( \TC{\gamma}{\beta_{|N}}{\alpha} \) holds for all \(N \in T \) iff it holds for any such \(N\).
\end{defn}

Our first technical lemma has nothing particular to do with decidability:
\begin{lemma}\label{lemma:binary witnesses}
  Let \( \alg{A} \) be any finite algebra with congruences \( \bot \prectype{1} \delta \) and \( \alpha \prectype{3} \beta \), such that \( \beta = \Cg{}{0}{1}\) for some (hence any) \( (\alpha,\beta) \)-trace \( \{0,1\} \). Assume further that \( \neg \TC{\beta}{\delta}{\bot}\). Then there exists a polynomial \(p(x,y) = p(x,p(x,y)) \) taking values in some \( (\bot,\delta) \)-minimal set \(U\), so that \begin{enumerate}
    \item If \( \delta \) is \( \beta \)-coherent, then \( p(0,y) \) collapses traces to points and \( p(1,u) = u \) for all \( u \in U \);
    \item If \( \delta \) is \( \beta \)-incoherent, then \(p(0,u) = u = p(1,u) \) for all \(u \in U \), but for some \( c \in U \), \( c \equiv_\delta d \), \(d \notin U \), \begin{align*}
      p(0,c) &= p(0,d) \\ &\text{but} \\
      p(1,c) &\neq p(1,d)
    \end{align*}witnesses the failure of centralization.
\end{enumerate}
\end{lemma}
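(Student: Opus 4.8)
The plan is to build the polynomial $p(x,y)$ by combining a polynomial that witnesses the coherence behaviour of $\delta$ relative to $\beta$ with the two-element structure living on an $(\alpha,\beta)$-trace $\{0,1\}$. First I would fix a $(\bot,\delta)$-minimal set $U$ together with an idempotent unary polynomial $e$ with $e(A)=U$, and a $(\bot,\delta)$-trace $N\subseteq U$. Since the monolith-adjacent cover $\alpha\prectype{3}\beta$ is of boolean type and $\beta=\Cg{}{0}{1}$, the pair $\{0,1\}$ generates $\beta$, and by Fact~\ref{fact:basics}\eqref{enum:emptytail} the $(\alpha,\beta)$-minimal sets are themselves just two-element sets on which every self-map is polynomial; this is what will let me ``program'' with $0$ and $1$ as if they were honest truth values.

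Next, the hypothesis $\neg\TC{\beta}{\delta}{\bot}$, together with the remark following \eqref{eq:TCsimplification} and the paragraph on congruence generation (so that $\beta$-shifting can be confined to one variable and $\delta$-shifting can be taken inside $U$, since every operation on $U$ is polynomial), yields a polynomial $q$ and elements witnessing a term-condition failure with the $\beta$-variable a single variable ranging over $\{0,1\}$ and the $\delta$-shift taking place between $c\equiv_\delta d$ with $c\in U$. Post-composing with $e$ and pre-composing the $\delta$-variable with a retraction onto $U$-coordinates, I would massage $q$ into a binary polynomial $p_0(x,y)$ with $p_0(\{0,1\}\times U)\subseteq U$ such that $p_0(0,c)=p_0(0,d)$ but $p_0(1,c)\neq p_0(1,d)$. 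To get idempotency in the second variable, i.e. $p(x,y)=p(x,p(x,y))$, I would iterate: on a finite set the sequence $y\mapsto p_0(x,y)\mapsto p_0(x,p_0(x,y))\mapsto\cdots$ is eventually periodic; replacing $p_0$ by a suitable iterate $p_0^{(m)}$ in the second coordinate makes each unary map $p_0^{(m)}(a,-)\restriction U$ a retraction onto its image, and a uniform choice of $m$ (a common multiple of the periods over the finitely many $a\in\{0,1\}$) works for both $x=0$ and $x=1$ simultaneously while preserving the inequality at $x=1$ — here one must check the iterate does not accidentally re-identify $p_0(1,c)$ with $p_0(1,d)$, which follows because those two points already lie in $U$ and the map $p_0(1,-)$ restricted to $U$, being a composite of polynomials mapping $U$ to $U$, is injective or collapses in a controlled way by $(\bot,\delta)$-minimality.

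With such a $p$ in hand I would split into the two cases according to Definition~\ref{defn:coherent}. In the coherent case I want $p(0,-)$ to collapse traces and $p(1,-)$ to be the identity on $U$. The point is that $\delta$ being $\beta$-coherent means: if $\TC{\beta}{\delta_{|N}}{\bot}$ held it would force $\TC{\beta}{\delta}{\bot}$, which is false; hence $\TC{\beta}{\delta_{|N}}{\bot}$ itself fails, so the witnessing failure can be taken with $c,d\in N$ inside the trace. Then $p(0,-)$ identifies $c$ with $d$, two distinct points of the trace $N$; since the induced algebra $\alg{A}_{|N}/\alpha_{|N}$ is (up to the type-$1$ analysis) a $G$-set or two-element set, a unary polynomial that identifies two distinct trace-points must collapse the whole trace to a point, and by Lemma~\ref{lemma:Maroti}(1) the value depends on at most one variable so this behaviour is uniform; composing with an automorphism-type polynomial I can further arrange $p(1,-)$ to be idempotent-with-full-image, i.e. the identity, on $U$. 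In the incoherent case the witnessing $d$ must lie \emph{outside} every trace — i.e. in the tail — precisely because $\TC{\beta}{\delta_{|N}}{\bot}$ \emph{does} hold (that is the meaning of incoherence when the conclusion $\TC{\beta}{\delta}{\bot}$ fails), so no failure is available with both coordinates in a trace; then I arrange $p(0,-)=p(1,-)=\mathrm{id}$ on $U$ by the same iterate-and-retract trick, keeping the tail element $d$ around to carry the inequality $p(1,c)\neq p(1,d)$.

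The main obstacle I anticipate is the simultaneous bookkeeping in the incoherent case: forcing $p$ to be the identity on all of $U$ in \emph{both} coordinates, \emph{and} $p(x,y)=p(x,p(x,y))$, \emph{and} still retaining a genuine centralization failure with one leg $d$ in the tail, without the normalizations quietly destroying the inequality. This is delicate because the retraction onto $U$ and the iteration in $y$ both act on $d$, and one must verify that $d$ (or a suitable replacement $\delta$-equivalent to $c$ and outside $U$) survives with $p(1,c)\neq p(1,d)$ — this is where I would lean on the ``no tail'' structure from Fact~\ref{fact:basics}\eqref{enum:emptytail} for the type-$3$ cover and on Maroti's Lemma to control how polynomials into $U$ can depend on $\delta$-blocks, ensuring the collapse $p(0,c)=p(0,d)$ is forced while the type-$3$ side keeps $c,d$ apart at $x=1$.
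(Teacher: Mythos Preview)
Your overall architecture is right---split on coherence, reduce to a binary polynomial in a $\{0,1\}$-variable and a single $\delta$-variable, then iterate for idempotency---but two steps do not go through as written.

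First, the reduction of the $\delta$-shift to one variable. You invoke the remark after \eqref{eq:TCsimplification}, which requires that every operation on the set carrying the shifted tuple be realized by a polynomial. That holds for the boolean trace $\{0,1\}$, but \emph{not} for the type-$1$ minimal set $U$: the induced algebra $\alg{A}_{|U}$ is essentially unary on traces and is nowhere near polynomially complete. The correct tool here is Lemma~\ref{lemma:Maroti}(1): any polynomial map from a product of $\delta$-classes into $U$ depends on at most one variable. The paper uses this to isolate a single active $\delta$-coordinate from the multi-variable witness $t(x,\vec{y})$; without it your passage from $t$ to a binary $p_0$ is unjustified.

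Second, and more seriously, your incoherent case is missing its main idea. You assert that iterating will make both $p(0,-)$ and $p(1,-)$ the identity on $U$, but iteration only yields the identity when the map is already a \emph{permutation} of $U$; you give no reason why $p(0,-)$ should be one (in the coherent case it certainly is not). The paper's mechanism is a walk through traces: after reducing to $q(x,y)$ with a failure at $c_k \equiv_\delta d_k$, one strings a chain $c_k=a_0,a_1,\ldots,a_\ell=d_k$ with each consecutive pair in a trace $N_j\subset U_j$. At the first index $i$ where $q(1,-)$ separates $a_i$ from $a_{i+1}$, the standing hypothesis $\TC{\{0,1\}}{N_i}{\bot}$ forces $q(0,-)$ to separate them too, so \emph{both} $q(0,-)$ and $q(1,-)$ are injective on $N_i$ and hence, after composing with $e_i$, permutations of $U_i$. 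One then takes $c=a_i$, $d=d_k$, and replaces $U$ by $U_i$. The same walk, together with trace-centralization, is also what shows that the single active coordinate for $t(0,\vec{y})$ and for $t(1,\vec{y})$ coincide---a nontrivial claim you have not addressed.

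A minor terminological point: ``in the tail'' is not what you want. The tail of $U$ is a subset of $U$, whereas the statement requires $d\notin U$. Since $c\in N$ and $c\equiv_\delta d$, having $d\in U$ would put $d$ in the $\delta_{|U}$-class of $c$, namely $N$; so $d\notin N$ already forces $d\notin U$, and the tail plays no role.
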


\begin{proof}
  Suppose first that \( \delta \) is \(\beta\)-coherent. Then for some \( (\bot,\delta)\)-trace \(N\) included in some minimal set \(U\), we have \( \neg \TC{\beta}{N}{\bot} \). Since \(\beta\) is generated by \(\{0,1\}\), \( \TC{\{0,1\}}{N}{\bot} \) must already be false.

  Choose a witnessing package \begin{align*}
    t(0,\vec{c}) &= t(0,\vec{d}) \\ &\text{but} \\
    t(1,\vec{c}) &\neq t(1,\vec{d})
  \end{align*}where we may choose \(t\) so that its range lies entirely in \(U\). The polynomial mapping \(t(1,\vec{y})\) is essentially unary as a mapping from \(\vec{N} \) into \(U\); say it depends on \(y_1\), and let \( p(x,y) = t(x,y,c_2, c_3, \dots) \). Then \(p(1,c_1) = t(1,\vec{c}) \neq t(1,\vec{d}) = p(1,d_1) \) while \( p(0,c_1) = p(0,d_1) \). Iterating \(p\) in the second variable if necessary, we get a polynomial satisfying the Lemma.

  The other case requires a bit more work.

  Assume now that for all traces \(N\), we have \( \TC{\beta}{N}{\bot} \). As in the first case, \( \neg \TC{\beta}{\delta}{\bot} \) implies that \( \neg \TC{\{0,1\}}{\delta}{\bot} \) already. Take a witnessing package \begin{align*}
    t(0,\vec{c}) &= t(0,\vec{d}) \\ &\text{but} \\
    t(1,\vec{c}) &\neq t(1,\vec{d})
  \end{align*}where we may assume that the image of \(t\) is contained in some \((\bot,\delta)\)-minimal set \(U_0\). The map \( t(0,\vec{y}): {c_1}/\delta \times {c_2}/\delta \times \dots \rightarrow U_0 \) depends only one one variable, say \(y_{k_0}\), and likewise \(t(1,\vec{y})\) on \(y_{k_1}\).

  \begin{claim}
    \(k_0 = k_1\)
  \end{claim}

  Suppose the Claim were false. Let \(q(x,y) = t(x,c_1, \dots, c_{k_1 - 1}, y, c_{k_1 + 1}, \dots )\). Then \(q(0,c_{k_1}) = q(0,y) \) for all \(y \equiv_\delta c_1\).

\addtocounter{claim}{1}
  Now, since \( c_{k_1} \equiv_\delta d_{k_1} \), there exists a sequence \[\tag{\EQnum} \label{eq:walk from ck to dk}c_{k_1} = a_0, a_1, \dots, a_\ell = d_{k_1}\]where each pair \( \{a_i, a_{i+1}\} \) belong to a \( (\bot,\delta) \)-trace \(N_i\) (\(i < \ell\)) included in a minimal set \(U_i = e_i(A)\). Since \(q(1,a_0) \neq q(1,a_{\ell}) \), there must exist some \(i < \ell\) such that \(q(1,a_i) \neq q(1,a_{i + 1}) \). But we have already seen that \(q(0,a_i) = q(0,a_{i+1}) \), contradicting \( \TC{\{0,1\}}{\delta_{|N_i}}{\bot} \). This proves the Claim, and we may set \(k := k_0 = k_1 \).

  Let \(a_0, a_1, \dots, a_{\ell} \) be the sequence defined in (\ref{eq:walk from ck to dk}); our as\-sump\-tion that \( \{0,1\} \) centralizes \(N_i\) means that for each \(i < \ell\), \(q(0,y)\) is injective on \(N_i\) iff \(p(1,y)\) is.

  Let \(i\) be the first index for which \(q(1,a_i) \neq q(1,a_{i+1})\); then \begin{align*}
    q(0,d_k) &=    q(0,c_k) = q(0,a_0) = q(0,a_1) = \ldots = q(0,a_i) \\ &\text{but} \\
    q(1,d_k) &\neq q(1,c_k) = q(1,a_0) = q(1,a_1) = \ldots = q(1,a_i)
  \end{align*}Then with \(c = a_i \), \(d = d_k\), \(U = U_i\), and \(p(v_0,v_1)\) equalling an iterate of \(e_i \circ q(v_0,v_1) \) such that \(p(x,p(x,y)) = p(x,y)\) for all \(x,y \in A\), the conclusions of the Lemma are satisfied.
\end{proof}

\begin{lemma}\label{lemma:!C(K,N) or !C(N,K)}
  If \( \bot_{\alg{A}} \prectype{1} \delta \), \( \alpha \prectype{3} \beta \), \( K = \{0,1\} \), and \(N \subset U\) are as in the statement of Lemma \ref{lemma:binary witnesses}, then at least one of \( \TC{K}{N}{\bot} \) and \( \TC{N}{K}{\bot} \) must fail. In both cases, the failure is witnessed by a binary polynomial which takes \( K \times N \) into \(U\).
\end{lemma}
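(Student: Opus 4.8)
The plan is to split into cases according to whether $\delta$ is $\beta$-coherent in the sense of Definition~\ref{defn:coherent} and then read the desired failure off the appropriate clause of Lemma~\ref{lemma:binary witnesses}. If $\delta$ is $\beta$-coherent, Lemma~\ref{lemma:binary witnesses}(1) supplies a polynomial $p(x,y)=p(x,p(x,y))$ whose range lies in $U$, with $p(0,\cdot)$ collapsing every $(\bot,\delta)$-trace --- in particular $N$ --- to a single point while $p(1,\cdot)$ is the identity on $U$. Viewing $p$ as having first variable in $K$ and second in $N$, we get $p(K\times N)\subseteq U$; and for any two distinct $n_1,n_2\in N$ we have $p(0,n_1)=p(0,n_2)$ whereas $p(1,n_1)=n_1\ne n_2=p(1,n_2)$. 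Thus $p$ exhibits a failure of $\TC{K}{N}{\bot}$ (at $x=0$ the value is independent of the $N$-variable, at $x=1$ it is not), which finishes the coherent case.

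The incoherent case carries the real content. Lemma~\ref{lemma:binary witnesses}(2) supplies $p(x,y)=p(x,p(x,y))$ valued in $U$, with $p(0,u)=u=p(1,u)$ for all $u\in U$, together with elements $c\in U$, $d\equiv_\delta c$, $d\notin U$ such that $p(0,c)=p(0,d)$ but $p(1,c)\ne p(1,d)$. Now $p$ by itself \emph{cannot} witness a failure of $\TC{N}{K}{\bot}$, since it is the identity on $U$ and hence $p(0,n)=n=p(1,n)$ for every $n\in N$; so the separation visible at the external point $d$ must be transported into $N$. I would do this as follows. Join $c$ to $d$ by a walk $c=b_0,b_1,\dots,b_r=d$ in which each consecutive pair $\{b_j,b_{j+1}\}$ lies in some $(\bot,\delta)$-trace $M_j$ (available because $c\equiv_\delta d$). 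The property ``$p(0,b_j)=p(1,b_j)$'' holds at $j=0$ (as $b_0=c\in U$) and fails at $j=r$ (as $b_r=d$), so for some $j$ it holds at $b_j$ and fails at $b_{j+1}$; set $M=M_j$. Since all $(\bot,\delta)$-traces are polynomially isomorphic (Fundamental Theorem of Tame Congruence Theory), fix a polynomial $\sigma$ carrying $N$ bijectively onto $M$ and put $r(x,w):=p(x,\sigma(w))$. Then $r(K\times N)\subseteq U$ because $p$ is valued in $U$; and if $n_1\ne n_2$ in $N$ are the $\sigma$-preimages of $b_j,b_{j+1}$, then $r(0,n_1)=p(0,b_j)=p(1,b_j)=r(1,n_1)$ while $r(0,n_2)=p(0,b_{j+1})\ne p(1,b_{j+1})=r(1,n_2)$. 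Hence $r$ witnesses a failure of $\TC{N}{K}{\bot}$ (at $w=n_1$ the value is independent of the $K$-variable, at $w=n_2$ it is not), as required.

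I expect this relocation in the incoherent case to be the one genuinely delicate step: one has to recognize that, because $p$ is trivial on $U$, a failure of $\TC{N}{K}{\bot}$ must be sought \emph{outside} $U$ along a $\delta$-path, and then pulled back into the prescribed trace $N$ by a polynomial isomorphism of traces --- the composite $r=p\circ\sigma$ staying inside $U$ only because $p$ takes every one of its values in $U$ no matter what is substituted. A slightly less explicit alternative is to stop once the failure sits in $M$ and invoke the congruence-generation remark following~\eqref{eq:TCsimplification}: any $(\bot,\delta)$-trace generates $\delta$, so $\TC{N}{K}{\bot}\iff\TC{M}{K}{\bot}\iff\TC{\delta}{K}{\bot}$; but since the statement asks for a binary polynomial into $U$ whose variables range over $K$ and $N$, exhibiting $r$ directly is cleaner.
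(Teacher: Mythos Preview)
Your proof is correct and follows essentially the same approach as the paper: split on whether $\delta$ is $\beta$-coherent, read off the polynomial from Lemma~\ref{lemma:binary witnesses}, and in the incoherent case walk from $c$ to $d$ through traces to locate the first step where $p(0,\cdot)$ and $p(1,\cdot)$ disagree, then transport that step back into $N$.

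The only noteworthy difference is in how the transport is done. The paper first observes that $p(K,c/\delta)\subset N$ and then, using the incoherence hypothesis $\TC{K}{N_j}{\bot}$, argues that both $p(0,\cdot)$ and $p(1,\cdot)$ are polynomial isomorphisms from $N_j$ onto $N$; it then takes the inverse of $p(0,\cdot)$ as the map $N\to N_j$. You instead invoke a generic polynomial isomorphism $\sigma\colon N\to M_j$ from the Fundamental Theorem and set $r(x,w)=p(x,\sigma(w))$. Your route is marginally cleaner: it sidesteps the verification that $p(0,\cdot)$ is bijective on $N_j$ and works uniformly for any trace $N\subset U$, not just the one containing $c$. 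The paper's route, on the other hand, keeps everything internal to the polynomial $p$ already in hand.
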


\begin{proof}
  In the case where \( \delta \) is \(\beta\)-coherent, the polynomial \(p\) found in that Lemma witnesses \( \neg \TC{K}{N}{\bot} \).

  So let \( \TC{\beta}{N}{\bot} \) for all \( (\bot,\delta)\)-traces \(N\), and fix witnesses \begin{align*}
    c = p(0,c) &= p(0,d) \\ &\text{but} \\
    c = p(1,c) &\neq p(1,d)
  \end{align*}where \(c \in U\), \( c \equiv_\delta d \notin U\), and the range of \(p\) is contained in \(U\). We aim to show that \( \TC{N}{K}{\bot} \) fails, and that its failure is witnessed by a binary polynomial of the claimed kind.

  Let \[ c = a_0, a_1, \dots, a_{\ell - 1}, a_\ell = d \]be a walk from \(c\) to \(d\) through traces (see the discussion following Equation \eqref{eq:walk from ck to dk}).  Since \[p(0,a_i) \equiv_\delta p(0,a_0) = c = p(1,a_0) \equiv_\delta p(1,a_i) \]for all \(i \leq \ell\), we know that \(p(K,c/\delta) \subset N\). Now let \( \{a_j, a_{j+1}\} \subset N_j \) be the first step where \begin{align*}
    p(0,a_j)    &=    p(1,a_j) \\ &\text{but} \\
    p(0,a_{j+1}) &\neq p(1,a_{j+1})
  \end{align*}By hypothesis, \(j > 0\). It follows that at least one, and hence both, of \(p(0,y) \) and \(p(1,y)\) are polynomial isomorphisms from \(N_j\) to \(N\). Let \( q \in \Pol{1}{\alg{A}} \) be the inverse isomorphism to \( p(0,y) \), where \(q(a) = a_j \) and \( q(a') = a_{j+1} \). Then \begin{align*}
    p(0,q(a)) = p(0,a_j)    &=    p(1,a_j) = p(1,q(a)) \\ &\text{but} \\
    p(0,q(a')) = p(0,a_{j+1}) &\neq p(1,a_{j+1}) = p(1,q(a'))
  \end{align*}so that \(p(x,q(y))\) witnesses \( \neg \TC{N}{K}{\bot} \) as required.
\end{proof}

We are now ready to start generating undecidable problems:

\begin{lemma}\label{lemma:disjoint centralizer}
  Let \( \alg{A} \) be a finite algebra, \( \bot \prectype{1} \delta \) and \( \alpha \prectype{3} \beta \), and let \(K = \{0,1\} \) be \( (\alpha,\beta) \)-minimal, where \(\beta = \Cg{}{0}{1}\). If \( \neg \TC{K}{\delta}{\bot} \) and \( \TC{\delta}{K}{\bot} \), then \( \HSP{\alg{A}} \) is hereditarily finitely undecidable.
\end{lemma}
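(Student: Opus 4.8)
The plan is to interpret the class $\EQtwo_{\mathrm{fin}}$ of finite structures carrying two disjoint equivalence relations --- which is finitely undecidable --- into $\HSP{\alg{A}}_{\mathrm{fin}}$; by the remark on interpretations above, this already shows that $\HSP{\alg{A}}$ is hereditarily finitely undecidable. Concretely, I would attach to each finite $\mathbf{E} = \langle X; R_0, R_1 \rangle \in \EQtwo$, computably and uniformly in $\mathbf{E}$, a finite \emph{diagonal} subpower $\alg{B}_{\mathbf{E}} \leq \alg{A}^X$ --- generated by the diagonal copy of $\alg{A}$ together with a bounded number of elements coding $R_0$ and $R_1$ --- and a fixed list of first-order formulas $\varphi_{\mathrm{dom}}(v)$, $\varphi_0(v,w)$, $\varphi_1(v,w)$ in the language of $\alg{A}$ expanded by finitely many constants, independent of $\mathbf{E}$, such that $\langle \varphi_{\mathrm{dom}}^{\alg{B}_{\mathbf{E}}}; \varphi_0^{\alg{B}_{\mathbf{E}}}, \varphi_1^{\alg{B}_{\mathbf{E}}} \rangle \cong \mathbf{E}$. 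The domain $\varphi_{\mathrm{dom}}^{\alg{B}_{\mathbf{E}}}$ will be $U^X \cap B_{\mathbf{E}}$ for a fixed $(\bot,\delta)$-minimal set $U$, which is $A$-definable in $\alg{B}_{\mathbf{E}}$ by Proposition \ref{prop:power of minimal set} as soon as we name a constant for an idempotent unary polynomial $e$ of $\alg{A}$ with $e(A) = U$.

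The heart of the construction is the extraction of the coding gadget. Using Lemmas \ref{lemma:binary witnesses} and \ref{lemma:!C(K,N) or !C(N,K)} I would first fix a binary polynomial $p(x,y) = p(x,p(x,y))$ of $\alg{A}$ whose range lies in $U$, a $(\bot,\delta)$-minimal set with trace $N$, such that $p(1,\cdot)$ is injective on $U$ (the identity on $U$ when $\delta$ is $\beta$-coherent, a polynomial isomorphism of a trace otherwise), while $p(0,\cdot)$ is constant on each $(\bot,\delta)$-trace; thus there are $c \ne c'$ in $N$ with $p(0,c) = p(0,c')$ but $p(1,c) \ne p(1,c')$, the concrete witness of $\neg\TC{K}{\delta}{\bot}$. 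Because $\alg{B}_{\mathbf{E}}$ contains the diagonal, $p^X$ acts coordinatewise, so for a tuple $\aI{x} \in \{0,1\}^X$ the unary operation $\aI{y} \mapsto p^X(\aI{x},\aI{y})$ on $U^X$ ``keeps the coordinates $i$ with $x^i = 1$ and crushes the traces in the coordinates with $x^i = 0$''. I would then encode the $R_1$-classes of $\mathbf{E}$ by a family of such $\{0,1\}^X$-masks among the generators of $\alg{B}_{\mathbf{E}}$ and the $R_0$-structure through $\delta$-adjacency of coordinates, so that $\varphi_1$ recovers $R_1$ from the ``$K$-direction'' --- which masks survive a given $p^X(\aI{x},\cdot)$ --- and $\varphi_0$ recovers $R_0$ from the ``$\delta$-direction'' --- which pairs of domain elements are linked by unary polynomials of $\alg{A}$ moving a point within a trace. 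The remaining constants name $0, 1, c, c', e$ and the finitely many unary polynomials of $\alg{A}$ witnessing $\delta$-adjacency on $N$.

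The main obstacle --- and the sole use of the hypothesis $\TC{\delta}{K}{\bot}$ --- is checking that the two relations so defined are \emph{disjoint} equivalence relations. Reflexivity and symmetry are built in; transitivity follows from the idempotency $p(x,p(x,y)) = p(x,y)$ together with the walk-through-traces arguments already used in Lemmas \ref{lemma:binary witnesses}--\ref{lemma:!C(K,N) or !C(N,K)}; but disjointness amounts to showing that a pair of distinct domain elements related by both $\varphi_0^{\alg{B}_{\mathbf{E}}}$ and $\varphi_1^{\alg{B}_{\mathbf{E}}}$ would manufacture a failure of $\TC{\delta}{K}{\bot}$ inside $\alg{B}_{\mathbf{E}}$, hence --- via the clone embedding $\Pol{k}{\alg{A}} \hookrightarrow \Pol{k}{\alg{B}_{\mathbf{E}}}$ --- inside $\alg{A}$ itself. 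This is exactly where the asymmetry of the hypotheses is essential: $\neg\TC{K}{\delta}{\bot}$ supplies a nondegenerate gadget, while $\TC{\delta}{K}{\bot}$ forbids the cross-talk that would otherwise collapse the coded structure or merge $R_0$ with $R_1$. Granting disjointness and the $\mathbf{E}$-independence of $\varphi_{\mathrm{dom}}, \varphi_0, \varphi_1$, the usual relativization (\cite{Hodges} Ch.\ 5, \cite{Burris_Sanka} \S V.5) converts the map $\mathbf{E} \mapsto \alg{B}_{\mathbf{E}}$ into a reduction of $\Th{\EQtwo_{\mathrm{fin}}}$ to $\Thfin{\HSP{\alg{A}}}$, and the theorem follows.
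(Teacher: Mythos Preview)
Your sketch assembles the right raw materials---the binary polynomial $p$ from Lemmas \ref{lemma:binary witnesses}--\ref{lemma:!C(K,N) or !C(N,K)}, diagonal subpowers, Proposition \ref{prop:power of minimal set}---but the interpretation you outline does not go through as stated, and the paper's argument is organized quite differently.

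The paper interprets \emph{graphs}, not $\EQtwo$, and the index set is not $V$ but the doubled set $I = V^\pm$. The domain of the interpretation is the set of \emph{atoms of the boolean algebra} $\{0,1\}^I \cap D$, which the paper shows is canonically $\alg{2}^V$ (not $\alg{2}^I$) precisely because every generator in $K^I$ is constant on each pair $\{v^+,v^-\}$ and $\TC{\delta}{K}{\bot}$ forbids any nonconstant polynomial $N \to K$ that could manufacture new elements of $K^I$ from the $\delta$-generators. Edges are then recovered by asking which two-vertex patterns $d_{|\{w_1^+,w_2^+\}} \oplus c_{|\mathrm{else}}$ actually lie in $D$; the hypothesis $\TC{\delta}{K}{\bot}$ is used a second time, and more substantially, to show that any such element must arise from a single edge-generator $\chi^\delta_e$ via an essentially unary map. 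That is the real workhorse step, and it has no analogue in your outline.

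By contrast, your proposed domain $U^X \cap B_{\mathbf E}$ is not in bijection with $X$ (it is typically much larger), so the claimed isomorphism with $\mathbf E$ cannot hold without an unspecified quotient. Your recipe for $\varphi_0$---``pairs linked by unary polynomials of $\alg{A}$ moving a point within a trace''---cannot separate coordinates at all, since unary polynomials of $\alg{A}$ act identically at every coordinate of the power. To make an $\EQtwo$-style interpretation work you would need a second boolean-algebra-like mechanism to pick out $R_0$-classes definably, but the unary minimal set $U$ supplies no such structure; this is exactly why the paper reserves the $\EQtwo$ interpretation for Corollary \ref{cor:sigma meet-irreducible}, where \emph{two} boolean-type minimal sets $K_0,K_1$ are available. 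Finally, ``a bounded number of elements coding $R_0$ and $R_1$'' cannot be right: the number of generators must grow with the number of equivalence classes, though the number of \emph{parameters in the defining formulas} stays bounded.
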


In other words, the centralizer of a boolean neighborhood must be disjoint from any of the unary-type atoms (or at least those which that neighborhood does not itself centralize), if \( \alg{A} \) is to live in a finitely decidable variety.

\begin{proof}
  Fix a \( (\bot,\mu) \)-minimal set \(U\). By Lemma \ref{lemma:!C(K,N) or !C(N,K)}, for any \((\bot,\delta)\)-trace \(N \subset U\), at least one of \(  \TC{K}{N}{\bot} \) or \( \TC{N}{K}{\bot} \) must fail. But if \( \neg \TC{N}{K}{\bot} \) then \( \neg \TC{\delta}{K}{\bot} \), contrary to the assumptions of the Lemma. 

Hence \( \neg \TC{K}{N}{\bot}\). Choose a witnessing package \begin{align*}
    q(0,c) &= q(0,d) \\ &\text{but} \\
    q(1,c) &\neq q(1,d)
  \end{align*}Without loss of generality, we can assume that \(q(1,u) = u \) for all \(u \in U\).

  Our plan is to semantically interpret the class of graphs with at least three vertices into diagonal subpowers of \( \alg{S} \). So let \( \mathbb{G} = \langle V, E \rangle \) be such a graph, and let \(I = V \times \{+,-\} = V^\pm \). Define \( \alg{D} = \alg{D}(\mathbb{G}) \leq \alg{A}^I\) to be generated by the diagonal together with the points \begin{align*}
    \chi^\beta_v &:= 1_{|\{v^+,v^-\}} \oplus 0_{|\mathrm{else}} \qquad (\text{all } v \in V) \\
    \chi^\delta_e &:= d_{|\{v^+,w^+\}} \oplus c_{|\mathrm{else}} \qquad (\text{all } e = \langle v,w \rangle \in E )\\
    \chi^\delta_{V^+} &:= d_{|V^+} \oplus c_{|V^-}
  \end{align*}Let \( \vec{\chi}^\beta \) and \( \vec{\chi}^\delta \) enumerate the respective sets of generators.
	
  Observe that there cannot be any nonconstant polynomial map from \(N\) to \( \{0,1\} \). This implies that \( D \cap \{0,1\}^I \) consists of all points which are constant on each set \( \{v^+, v^- \} \); in other words, \( \alg{D}_{|\{0,1\}^I} \) is canonically isomorphic to the boolean algebra \( \alg{2}^V \). This subset is definable (Proposition \ref{prop:power of minimal set}), as is its set of atoms \( \{ \chi^\beta_v \colon v \in V \} \); by abuse of language, we will allow ourselves to quantify over these atoms by saying things like ``there exists a vertex \( \chi^\beta_v \)...''

\begin{claim}\label{claim:edges (disjoint centralizer)}
  The set of those \( \aI{x} \in D \) of the form \( d_{|\{w_1^+,w_2^+\}} \oplus c_{|\mathrm{else}} \) for two distinct vertices \(w_1,w_2 \in V\) is definable (using the parameter \( \chi^\delta_{V^+} \)).
\end{claim}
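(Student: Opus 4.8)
The plan is to exhibit the asserted set as the image, under a single definable unary operation of $\alg{D}$, of the set of ``weight-two'' elements of the definable boolean algebra $\alg{D}_{|\{0,1\}^I}\cong\alg{2}^V$.

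First I would collect what is already in place: by Proposition \ref{prop:power of minimal set} the set $\{0,1\}^I\cap D$ is $A$-definable, and we have identified $\alg{D}_{|\{0,1\}^I}$ with $\alg{2}^V$ in such a way that its atoms are exactly the vertex elements $\chi^\beta_v$; since ``being an atom'' and ``covering an atom'' are first-order over the lattice reduct, the set $P_2$ of those $\aI{b}$ that are joins of exactly two distinct atoms is definable in $\alg{D}$ (and $\card{V}\geq 3$ leaves room for this notion to be nonvacuous). Next I would introduce the operation $\Phi(\aI{x}):=q^I(\aI{x},\chi^\delta_{V^+})$, where $q$ is the binary polynomial fixed earlier in the proof --- range inside $U$, $q(1,u)=u$ for $u\in U$, and $q(0,c)=q(0,d)$ --- transported to $\alg{D}$ along the clone embedding $\Pol{2}{\alg{A}}\hookrightarrow\Pol{2}{\alg{D}}$; this is a definable operation once we name $\chi^\delta_{V^+}$ and the finitely many coefficients of $q$ by constants. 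The substance of the argument is then a coordinatewise evaluation: if $\aI{b}\in\alg{D}_{|\{0,1\}^I}$ codes a subset $W\subseteq V$, then on a coordinate $w^+$ with $w\in W$ one reads $q(1,d)=d$, on a coordinate $w^-$ with $w\in W$ one reads $q(1,c)=c$, and on every remaining coordinate one reads the common value $q(0,c)=q(0,d)$; granting (see below) that this common value is $c$, this yields $\Phi(\aI{b})=d_{|\{w^+:w\in W\}}\oplus c_{|\mathrm{else}}$. In particular $\Phi$ carries $P_2$ into $D$ and onto exactly the set of $d_{|\{w_1^+,w_2^+\}}\oplus c_{|\mathrm{else}}$ with $w_1\neq w_2\in V$.

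To finish I would verify that this image is literally the set named in the Claim. For one inclusion, every element of $\Phi(P_2)$ lies in $D$, being the $\Phi$-image of an element of $\alg{D}_{|\{0,1\}^I}\subseteq D$, and has the displayed shape by the computation. For the reverse inclusion, any $\aI{x}\in D$ of the shape $d_{|\{w_1^+,w_2^+\}}\oplus c_{|\mathrm{else}}$ agrees in every coordinate with $\Phi(\chi^\beta_{w_1}\lor\chi^\beta_{w_2})$ and hence equals it, while $\chi^\beta_{w_1}\lor\chi^\beta_{w_2}\in P_2$. Thus the asserted set equals $\{\Phi(\aI{b}):\aI{b}\in P_2\}$, which is first-order definable with the single parameter $\chi^\delta_{V^+}$ (together with the constants naming elements of $A$ used above), as required.

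I expect the real obstacle to be the harmless-looking clause that the common value $q(0,c)=q(0,d)$ may be taken to equal $c$ --- so that the ``$\mathrm{else}$'' coordinates of $\Phi(\aI{b})$ come out to $c$ rather than to some stray point of the minimal set $U$. The polynomial $q$ delivered by Lemma \ref{lemma:!C(K,N) or !C(N,K)} is only pre-normalized so that $q(1,\cdot)$ is the identity on $U$; one still has to argue, from the structure of the unary-type minimal set $U$ and the fact that $q(0,\cdot)$ is constant on the trace $N$, that this constant value lies in $N$, whereupon we simply rechristen it $c$ and choose a fresh partner $d\neq c$ in $N$, every other clause of the witnessing package surviving intact. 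Once that normalization is secured, all that remains is the routine coordinate bookkeeping sketched above.
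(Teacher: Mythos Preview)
Your approach differs from the paper's: the paper characterizes the desired set implicitly, by requiring that $\aI{x}\in U^I$ satisfy two equations
\[
q\bigl(\chi^\beta_{w_1}+\chi^\beta_{w_2},\aI{x}\bigr)=q\bigl(\chi^\beta_{w_1}+\chi^\beta_{w_2},\chi^\delta_{V^+}\bigr)
\quad\text{and}\quad
q\bigl((\chi^\beta_{w_1}+\chi^\beta_{w_2})',\aI{x}\bigr)=q\bigl((\chi^\beta_{w_1}+\chi^\beta_{w_2})',\aI{c}\bigr),
\]
and existentially quantifies over the two atoms; you instead try to produce the set explicitly as $\Phi(P_2)$ for the definable unary map $\Phi(\aI{b})=q^I(\aI{b},\chi^\delta_{V^+})$.

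Unfortunately the ``harmless-looking'' normalization $q(0,c)=c$ that your argument needs is not harmless; it is in fact incompatible with the standing hypothesis $\TC{\delta}{K}{\bot}$ of Lemma~\ref{lemma:disjoint centralizer}. Suppose after your renaming we have $q(0,c)=c$; since $q(1,c)=c$ as well, we have $q(0,c)=q(1,c)$. Applying $\TC{\delta}{K}{\bot}$ with the $\delta$-shift $c\mapsto d$ (and the $K$-variable running over $\{0,1\}$) forces $q(0,d)=q(1,d)=d$; but $q(0,d)=q(0,c)=c\neq d$, a contradiction. So the collapsed value $m_0=q(0,c)=q(0,d)$ can never lie in $N$, and in particular cannot be rechosen as $c$. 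Without the normalization your $\Phi(\aI{b})$ lands at $d$ on $\{w_1^+,w_2^+\}$, at $c$ on $\{w_1^-,w_2^-\}$, and at $m_0\neq c$ on all remaining coordinates, which is not the shape the Claim asks for.

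There is a second, structural reason to be suspicious of the parametrizing strategy: were it to succeed, it would prove more than the Claim --- it would show that $d_{|\{w_1^+,w_2^+\}}\oplus c_{|\mathrm{else}}\in D$ for \emph{every} pair $w_1\neq w_2$, since it equals $\Phi(\chi^\beta_{w_1}\lor\chi^\beta_{w_2})\in D$. But the very next Claim in the proof establishes that such an element lies in $D$ only when $w_1\Edge w_2$, and the whole interpretation of $\mathbb{G}$ rests on exactly that distinction. The paper's implicit characterization avoids this trap because it never manufactures the element; it only tests whether a given $\aI{x}\in D$ has the right coordinate pattern, using $q(1,\cdot)=\mathrm{id}_U$ on the coordinates singled out by the boolean atom and its complement.
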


It is sufficient to show that for \( \aI{x} \in U^I \cap D \), \begin{align}
\stepcounter{claim}		
  q \left( \chi^\beta_{w_1} + \chi^\beta_{w_2}, \aI{x} \right) &= q \left( \chi^\beta_{w_1} + \chi^\beta_{w_2}, \chi^\delta_{V^+} \right) \tag{\EQnum} \label{eq:vw}\\ &\text{and} \notag \\
\stepcounter{claim}
  q \left( (\chi^\beta_{w_1} + \chi^\beta_{w_2})', \aI{x} \right) &= q \left( (\chi^\beta_{w_1} + \chi^\beta_{w_2})', c \right) \tag{\EQnum} \label{eq:vw'}
\end{align}(where \(+\) is boolean join and \('\) is boolean complement) iff \( \aI{x} = d_{|\{w_1^+,w_2^+\}} \oplus c_{|\mathrm{else}} \).

The direction (\(\Leftarrow\)) is a straightforward computation. For the forward direction, \[
i \in \{ w_1^+, w_2^+\} \quad \Longrightarrow \quad x^i = q(1,x^i) = q(1,(\chi^\delta_{V^+})^i) = d \]from equation (\ref{eq:vw}), and similarly \[
i \in \{ w_1^-, w_2^- \} \quad \Longrightarrow \quad x^i = q(1,x^i) = q(1,(\chi^\delta_{V^+})^i) = c \]while equation (\ref{eq:vw'}) yields \[
i \notin \{w_1^\pm, w_2^\pm \} \quad \Longrightarrow \quad x^i = p(1,x^i) = p(1,c) = c\]The proof of the claim is then accomplished by existentially quantifying \(\chi^\beta_{w_1},\chi^\beta_{w_2}\).

\begin{claim}
  If \( \aI{x} = d_{|\{w_1^+,w_2^+\}} \oplus c_{|\mathrm{else}} \in D \) then \( w_1 \Edge w_2 \).
\end{claim}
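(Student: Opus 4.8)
The plan is to show that the only element of \(D\) exhibiting the value \(d\) on \(\{w_1^+,w_2^+\}\) and \(c\) on every other coordinate is built using the edge-generator \(\chi^\delta_{\langle w_1,w_2\rangle}\), whose existence presupposes that \(\langle w_1,w_2\rangle\in E\). Since \(D\) is generated by the diagonal together with the displayed generators, I would first write \(\aI{x}=f(\chi^\beta_{v_1},\dots,\chi^\beta_{v_k},\chi^\delta_{e_1},\dots,\chi^\delta_{e_l},\chi^\delta_{V^+})\) evaluated coordinatewise, where \(f\) is a polynomial of \(\alg{A}\) obtained from a generating term by absorbing the diagonal constants. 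Because every coordinate of \(\aI{x}\) lies in \(\{c,d\}\subset U\), precomposing \(f\) with the idempotent \(e\) lets me assume \(f\) takes its values in \(U\). I then record the elementary combinatorics: at any place \(v^-\) the \(\delta\)-generator coordinates are all \(c\); at \(v^+\) the \(\chi^\delta_{V^+}\)-coordinate equals \(d\) and the \(\chi^\delta_{e_j}\)-coordinate equals \(d\) precisely when \(v\in e_j\); and the \(\beta\)-generator coordinates at \(v^+\) and \(v^-\) agree, recording only which \(v_j\) (if any) equals \(v\).

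The first substantive step is to eliminate the \(\beta\)-generators. Since \(x^{v^-}=c\) for every \(v\), the value \(f\) takes when its \(\delta\)-slots are all \(c\) does not depend on which of the occurring \(\beta\)-patterns is plugged into its \(\beta\)-slots; as these patterns are coordinatewise in \(K=\{0,1\}\) while the \(\delta\)-generator coordinates all lie in a single \(\delta\)-class, the hypothesis \(\TC{\delta}{K}{\bot}\) propagates this independence from the all-\(c\) pattern to every \(\delta\)-pattern. Hence \(\aI{x}\) is given coordinatewise by a polynomial \(\tilde f\) in the \(\delta\)-generator coordinates alone, i.e.\ a polynomial from a power of a single \(\delta\)-class into \(U\); since \(\bot\prectype{1}\delta\) and \(U\) is \((\bot,\delta)\)-minimal, Lemma \ref{lemma:Maroti}(1) forces \(\tilde f\) to depend on at most one coordinate. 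Thus there is a single generator \(g\) among \(\chi^\delta_{e_1},\dots,\chi^\delta_{e_l},\chi^\delta_{V^+}\) and a one-variable map \(\phi\colon\{c,d\}\to U\) with \(x^{v^\epsilon}=\phi(\text{the }g\text{-coordinate at }v^\epsilon)\).

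It remains to identify \(g\). The \(-\) places give \(\phi(c)=c\); the places \(w_1^+\) and \(w_2^+\), where \(\aI{x}=d\neq\phi(c)\), force the \(g\)-coordinate there to be \(d\), so that \(\phi(d)=d\) and \(g\) is either \(\chi^\delta_{V^+}\) or an edge-generator \(\chi^\delta_{e_g}\) with \(w_1\in e_g\) (respectively \(w_2\in e_g\)). Finally, choosing a third vertex \(z\notin\{w_1,w_2\}\) — available because the interpreted graphs have at least three vertices — and evaluating at \(z^+\), where \(\aI{x}=c\neq\phi(d)\), forces the \(g\)-coordinate at \(z^+\) to be \(c\); since \(\chi^\delta_{V^+}\) has coordinate \(d\) at every place \(v^+\), this excludes \(g=\chi^\delta_{V^+}\). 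Therefore \(g=\chi^\delta_{e_g}\) with \(w_1,w_2\in e_g\), and because an edge of \(\mathbb{G}\) has exactly two endpoints, \(e_g=\langle w_1,w_2\rangle\in E\); that is, \(w_1\Edge w_2\).

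The step I expect to be the main obstacle is the elimination of the \(\beta\)-generators: one must confirm that \(\TC{\delta}{K}{\bot}\) genuinely applies to \(f\) together with its parameters, and handle the degenerate cases in which no \(\beta\)-generator occurs, or every vertex is one of the \(v_j\) and so no place carries the trivial \(\beta\)-pattern — in the latter case one first uses \(\TC{\delta}{K}{\bot}\) to collapse all the single-support \(\beta\)-patterns to one another. Once \(\aI{x}\) has been expressed through a polynomial into \(U\) that depends on a single \(\delta\)-generator, the remainder is the short case analysis steered by the three places \(w_1^+\), \(w_2^+\), \(z^+\).
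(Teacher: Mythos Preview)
Your proposal is correct and follows essentially the same route as the paper: write \(\aI{x}\) as a polynomial in the generators with values in \(U\); use the equalities at the \(v^-\) coordinates (where all \(\delta\)-slots are \(c\)) together with \(\TC{\delta}{K}{\bot}\) to replace the \(\beta\)-pattern by a fixed one; invoke the essentially-unary property of maps from a product of \(\delta\)-classes into \(U\); and finish with the coordinate count using \(|V|\geq 3\). Your self-flagged concern about ``degenerate'' \(\beta\)-patterns is not an issue in the paper's setup, since \(\vec{\chi}^\beta\) enumerates \emph{all} the \(\chi^\beta_v\), so the \(\beta\)-pattern at every coordinate \(v^\pm\) is the standard basis vector \(e_v\), and these are exactly the patterns whose agreement you establish at the \(v^-\) places.
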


To see this, let \( \aI{x} = d_{|\{w_1^+,w_2^+\}} \oplus c_{|\mathrm{else}} = t( \vec{\chi}^\beta, \vec{\chi}^\delta) \in D \) for some polynomial \(t \in \Pol{\card{V} + \card{E} + 1}{\alg{A}}\). Without loss of generality, \(t\)'s image is contained in \(U\). By inspecting the \(v^-\) coordinates, we see that for any \(v \in V\) \[ t(0,\ldots,0,1,0,\ldots, c, \ldots, c) = c \](the \(1\) occuring in the \(v^{\mathrm{th}}\) place). Fix any \(w \in V\); then \[ x^{v^-} = t\left( (\vec{\chi}^\beta )^{w^-}, ( \vec{\chi}^\delta )^{v^-} \right)\]Moreover, since \( (\vec{\chi}^\beta)^{v^+} = (\vec{\chi}^\beta)^{v^-} \) for all \(v\) and \( \TC{\delta}{\{0,1\}}{\bot} \), one has \begin{align*}
  t\left( (\vec{\chi}^\beta)^{v^+}, (\vec{\chi}^\delta)^{v^-} \right) = t\left( (\vec{\chi}^\beta)^{v^-} \right. &,\left. (\vec{\chi}^\delta)^{v^-} \right) = t\left( (\vec{\chi}^\beta)^{w^-}, (\vec{\chi}^\delta)^{v^-} \right) \\ &\Downarrow \\
  t\left( (\vec{\chi}^\beta)^{v^+}, (\vec{\chi}^\delta)^{v^+} \right) &= t\left( (\vec{\chi}^\beta)^{w^-}, (\vec{\chi}^\delta)^{v^+} \right)
\end{align*}In other words, \[ x^i = t \left( (\vec{\chi}^\beta)^{w^-}, (\vec{\chi}^\delta)^i \right) \]for all \(i \in I\).
	
But as a polynomial on \(U\), \( t( (\vec{\chi}^\beta)^{w^-},\vec{y} ) \) depends only on one variable, say \(t((\vec{\chi}^\beta)^{w^-},\vec{y}) = f( y_k )\), with \(y_k\) corresponding to a generator \( \chi^\delta_k \in \vec{\chi}^\delta \); \(k\) is either an edge of \( \mathbb{G} \) or \( V^+ \). Since \(c,d\) are taken from the same trace and \(f\) does not collapse traces to points, we must have that \( \aI{x} = f(\chi^\delta_k) \) and \( x^i = x^j \) iff \( (\chi^\delta_k)^i = (\chi^\delta_e)^j \) for all \(i,j \in I \); since \( \card{V} > 2 \) and \(\aI{x} \) has \(d\) at only has two coordinates (out of at least six), \( k \) must be the edge  \(\langle w_1,w_2 \rangle \). This proves the Claim.
	
We can now complete the semantic interpretation: \( V \) is defined as the atoms of \( \{0,1\}^I \cap D \), and \( v \Edge w \) iff there exists \( \aI{x} \) as in Claim \ref{claim:edges (disjoint centralizer)} such that \( \chi^\beta_v \) and \( \chi^\beta_w \) are the two atoms witnessing the truth of the formula in that Claim.
\end{proof}

We are ready for the main result of this section:
\setcounter{claim}{0}
\renewcommand{\theclaim}{\ref{MainThm:sigma comparable}.\arabic{claim}}

\begin{proof}[Proof of Theorem \ref{MainThm:sigma comparable}]
  Let \( \alg{S} \) be subdirectly irreducible, with unary-type monolith \(\mu\); let \( \beta \) be incomparable to the strongly solvable radical \( \sigma \). Without loss of generality (see Fact \ref{fact:basics}), \( \typset{\alg{S}} = \{ 1,3 \}\), and some lower cover of \(  \beta \) is (strictly) below \( \sigma \). Choose \( \beta \land \sigma \prectype{1} \alpha \leq \sigma \); clearly \(  \beta \land \sigma = \alpha \land \beta =: \alpha \beta \prectype{3}  \beta \).
		
  Choose an \( ( \alpha \beta,  \beta ) \)-minimal set, which we may take without loss of generality to be polynomially isomorphic to the two-element boolean algebra \( \{0,1\} \); similarly without loss of generality, \(  \beta = \Cg{}{0}{1} \); also choose a \( (\alpha \beta, \alpha) \)-minimal set \(U\) containing elements \( c \equiv_{\alpha \setminus \alpha \beta} d \).
		
  Now, by Lemma \ref{lemma:zeta solvable}, the centralizer of \( \mu \) is solvable; hence \( \neg \TC{ \{0,1\}}{\mu}{\bot} \). By Lemma \ref{lemma:disjoint centralizer}, we may assume that the centralizer of \( \{0,1\} \) is the trivial congruence: for any \( a_1 \neq a_2 \) in \(\alg{S} \), there exists a polynomial \( t(x,\vec{y}) \) and tuples \( \vec{b}_0, \vec{b}_1 \) from \( \{0,1\} \) so that \begin{align*}
    t(a_1,\vec{b}_0) &=    t(a_1, \vec{b}_1) \\ &\text{but} \\
    t(a_2,\vec{b}_0) &\neq t(a_2, \vec{b}_1)
  \end{align*}Since \( \alg{S}_{|\{0,1\}} \) is a boolean algebra, the discussion after equation \eqref{eq:TCsimplification} shows that we can transform this package into one using a binary polynomial:\begin{align*}
    s(a_1,0) &= s(a_1,1) \\ &\text{but} \\
    s(a_2,0) &\neq s(a_2,1)
  \end{align*}witnessing that \( \{ a_1, a_2 \} \) does not centralize \( \{0,1\} \).
	
  Our strategy is to interpret the class of graphs with at least five vertices into \( \HSP{\alg{S}} \), so let \( \mathbb{G} = \langle I, E \rangle \) be any graph. Define \( \alg{D} = \alg{D}(\mathbb{G}) \leq \alg{S}^I \) to be the subalgebra generated by the constants together with all points \[ \chi^\beta_i := 1_{|i} \oplus 0_{|\mathrm{else}} \qquad (i \in I)\]and \[ \chi^\alpha_e := d_{|\{i,j\}} \oplus c_{|\mathrm{else}} \qquad (e = \{i,j\} \in E) \]By the usual arguments, \( \{0,1\}^I \subseteq D \) is a definable subset, as is the set of its atoms.
		
  Let \( \chi^\beta_i \) be any atom in \( \{0,1\}^I \). Let \( \aI{y}, \aI{z} \) be any elements of \( \alg{D} \). Then \begin{align*}
    p(0, \aI{y}) &= p(\chi^\beta_i,\aI{y}) \\ &\Updownarrow \\
    p(0, \aI{z}) &= p(\chi^\beta_i,\aI{z})
  \end{align*}for all \( p \in \Pol{2}{\alg{S}} \) if and only if \(y^i\) and \(z^i\) are congruent modulo the centralizer of \( \{0,1\} \), i.e. are equal. But \( \alg{S} \) only has finitely many binary polynomial operations; hence the above condition is a first-order property \( \Phi(\chi^\beta_i, \aI{y}, \aI{z}) \): we have proved
	
  \begin{claim}
    If \( s \in S \), \(i \in I \), \( \aI{y} \in D \) then \(y^i = s \) iff \( \Phi(\chi^\beta_i,\aI{y},s) \).
  \end{claim}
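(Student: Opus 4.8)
The plan is to read the Claim straight off the biconditional characterization recorded just before it, so the real work is in justifying that characterization. Recall that, for an atom $\chi^\beta_i$ of $\{0,1\}^I\cap D$ and any $\aI{y},\aI{z}\in D$, $\Phi(\chi^\beta_i,\aI{y},\aI{z})$ is the formula
\[
\bigwedge_{p\in\Pol{2}{\alg{S}}}\Bigl(p(0,\aI{y})=p(\chi^\beta_i,\aI{y}) \;\longleftrightarrow\; p(0,\aI{z})=p(\chi^\beta_i,\aI{z})\Bigr),
\]
a finite conjunction since $\alg{S}$ has only finitely many binary polynomials, hence a bona fide first-order formula with parameters. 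I would show $\Phi(\chi^\beta_i,\aI{y},\aI{z})$ holds iff $y^i=z^i$; the Claim then results by taking $\aI{z}$ to be the diagonal image of $s$, which lies in $D$ (a diagonal subpower) and has $i$-th coordinate $s$.

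The first step is a coordinatewise reduction. Since $\chi^\beta_i$ agrees with the constant $0$ in every coordinate except the $i$-th, where it equals $1$, for any $p\in\Pol{2}{\alg{S}}$ and any $\aI{w}\in S^I$ the tuples $p(0,\aI{w})$ and $p(\chi^\beta_i,\aI{w})$ can disagree only in coordinate $i$; hence $p(0,\aI{w})=p(\chi^\beta_i,\aI{w})$ iff $p(0,w^i)=p(1,w^i)$. Thus $\Phi(\chi^\beta_i,\aI{y},\aI{z})$ is equivalent to: for every binary polynomial $p$, $p(0,y^i)=p(1,y^i)$ iff $p(0,z^i)=p(1,z^i)$ — a condition on the single coordinates $y^i,z^i$.

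The implication ($\Leftarrow$) is trivial: if $y^i=z^i$ the two sides of each biconditional are the same statement. For ($\Rightarrow$), argue contrapositively: if $y^i\neq s$ then, since (by the standing reductions of this proof, obtained from Lemmas~\ref{lemma:zeta solvable} and~\ref{lemma:disjoint centralizer}) the centralizer of $\{0,1\}$ is trivial, and since $\{0,1\}$ is a boolean-type trace with empty tail (Fact~\ref{fact:basics}) so that every operation on it is a polynomial of $\alg{S}$, the reduction following \eqref{eq:TCsimplification} produces a \emph{binary} polynomial $q$ with $q(y^i,0)=q(y^i,1)$ but $q(s,0)\neq q(s,1)$. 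Setting $p(x,v):=q(v,x)\in\Pol{2}{\alg{S}}$ gives $p(0,y^i)=p(1,y^i)$ and $p(0,s)\neq p(1,s)$, so this $p$ falsifies the biconditional and $\neg\Phi(\chi^\beta_i,\aI{y},s)$.

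The Claim is thus a bookkeeping corollary; its only substantive input is the triviality of the centralizer of $\{0,1\}$, which is not re-proved here but set up earlier (in a subdirectly irreducible algebra a nontrivial centralizer would contain the monolith, contradicting Lemma~\ref{lemma:disjoint centralizer}). The step I would watch most carefully therefore precedes the Claim: reducing the centralizer's term condition — whose $\beta$-part ranges over the boolean trace $\{0,1\}$ — to a single binary polynomial evaluated at $0$ and $1$, which is valid exactly because that trace has empty tail and realizes all of its operations as polynomials of $\alg{S}$.
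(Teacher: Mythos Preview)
Your proposal is correct and follows essentially the same approach as the paper. The paper argues, in the paragraph immediately preceding the Claim, that the biconditional over all binary polynomials holds iff $y^i$ and $z^i$ are congruent modulo the centralizer of $\{0,1\}$, which is trivial; you make the coordinatewise reduction and the contrapositive more explicit, but the substance---triviality of the centralizer plus the reduction to a binary witness via the boolean structure of $\alg{S}_{|\{0,1\}}$---is identical.
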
Or in plainer English: \( \alg{D} \) knows its own product structure.

  In particular: the set of those \( \aI{x} \in U^I \cap D\) of the form \( d_{|\{i_0,i_1\}} \oplus c_{|\mathrm{else}} \) for precisely two vertices \( i_0, i_1 \), is a definable subset. The generators \( \chi^\alpha_e \) belong to this set; we will be done if we can show that

  \begin{claim}
    If \(i_0 \neq i_1 \) and \(\aI{x} = d_{|\{i_0,i_1\}} \oplus c_{|\mathrm{else}} \in D\) then \( i_0 \Edge i_1 \).
  \end{claim}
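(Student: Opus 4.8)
The plan is to mirror the proof of Claim~2 in Lemma~\ref{lemma:disjoint centralizer}. Since $\alg{D}$ is generated by the diagonal, the $\chi^{\beta}_i$ and the $\chi^{\alpha}_e$, write $\aI{x} = t(\vec{\chi}^{\beta},\vec{\chi}^{\alpha})$ for a polynomial $t$ of $\alg{S}$ (the diagonal constants absorbed into $t$), and replace $t$ by $e\circ t$, where $e\in\Pol{1}{\alg{S}}$ is idempotent with image $U$; this is harmless since each $x^i\in\{c,d\}\subseteq U$, so we may assume the range of $t$ lies in $U$. Coordinatewise, $x^i = t\bigl((\vec{\chi}^{\beta})^i,(\vec{\chi}^{\alpha})^i\bigr)$ for every $i\in I$, where $(\vec{\chi}^{\beta})^i$ is the $i$-th ``unit'' tuple over $\{0,1\}$ and $(\vec{\chi}^{\alpha})^i$ is the edge-incidence tuple of $i$ over $\{c,d\}$. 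I would pass at once to $\bar{\alg{S}} := \alg{S}/(\alpha\wedge\beta)$, in which $\bar\alpha,\bar\beta$ are incomparable atoms of types $1$ and $3$, $\bar{U}$ is a $(\bot,\bar\alpha)$-minimal set, $\bar 0\neq\bar 1$, $\bar c\neq\bar d$, and $\bar x^i = \bar d$ exactly for $i\in\{i_0,i_1\}$, and carry out the rest of the argument there (the equation descends to $\bar t$).

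\emph{Step 1.} Show that $\bar t$, on the inputs at hand, does not essentially depend on the $\chi^{\beta}$-block and depends on at most one variable of the $\chi^{\alpha}$-block. As $i$ varies, the $\chi^{\beta}$-coordinates stay within a single $\bar\beta$-class (the cover $\alpha\beta\prectype{3}\beta$ is boolean and, by Fact~\ref{fact:basics}, has empty tail, so $\{0,1\}$ is a single trace) and the $\chi^{\alpha}$-coordinates within a single $\bar\alpha$-class; thus $\bar t$ restricts to a polynomial map on a product of $(\bar\alpha\vee\bar\beta)$-classes with range in the unary-type minimal set $\bar U$. I would extract the claimed essential unariness from the incompatibility of boolean-type ($3$) and unary-type ($1$) local structure, Maroti's Lemma (Lemma~\ref{lemma:Maroti}) applied to the $\bar\alpha$-block, and the transfer principles available in a finitely decidable variety (Fact~\ref{fact:basics}). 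The outcome is a single edge $e_0\in E$ with $\bar{\aI{x}} = \bar g\bigl(\bar\chi^{\alpha}_{e_0}\bigr)$, where $\bar g$ is the unary polynomial of $\bar{\alg{S}}$ obtained from $\bar t$ by fixing all other arguments at their base values.

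\emph{Step 2.} Conclude as in Lemma~\ref{lemma:disjoint centralizer}. The polynomial $\bar g$ cannot collapse the trace containing $\{\bar c,\bar d\}$, since otherwise $\bar{\aI{x}}$ would be constant, contradicting $i_0\neq i_1$; hence $\bar g(\bar c)\neq\bar g(\bar d)$ and $\bar x^i = \bar x^j \iff (\bar\chi^{\alpha}_{e_0})^i = (\bar\chi^{\alpha}_{e_0})^j$. Therefore the partition of $I$ into the fibres of $\bar{\aI{x}}$ --- namely $\{i_0,i_1\}$ and its complement, of sizes $2$ and $|I|-2\geq 3$ --- coincides with the partition into the fibres of $\bar\chi^{\alpha}_{e_0}$ --- namely $e_0$ and $I\setminus e_0$, of sizes $2$ and $|I|-2$. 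Since $2\neq|I|-2$ the two-element blocks must match, so $\{i_0,i_1\} = e_0\in E$, that is, $i_0\Edge i_1$. (This is where $|I|\geq 5$ is used.)

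The main obstacle is Step~1, the neutralisation of the $\chi^{\beta}$-block. In Lemma~\ref{lemma:disjoint centralizer} the analogous reduction was driven by the hypothesis $\TC{\delta}{K}{\bot}$ (so that the $\beta$-block actually centralised) together with the vertex-doubling $v\mapsto v^{\pm}$, and neither device is present here --- indeed, by Lemma~\ref{lemma:disjoint centralizer} itself, the centralizer of $\{0,1\}$ has been reduced to the trivial congruence. The reduction must therefore be squeezed out of the structural constraints on polynomial maps from a boolean-type neighbourhood into a unary-type neighbourhood inside a finitely decidable variety; I expect the transfer principles, Lemma~\ref{lemma:!C(K,N) or !C(N,K)}, and perhaps Kearnes's coherence notion (Definition~\ref{defn:coherent}) to carry that weight, and this to be the delicate heart of the proof.
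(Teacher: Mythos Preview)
Your Step~2 is fine and matches the paper. The gap is Step~1: you correctly isolate the neutralisation of the $\chi^\beta$-block as the crux, but then reach for heavy machinery (transfer principles, Lemma~\ref{lemma:!C(K,N) or !C(N,K)}, coherence) without an actual argument, and explicitly concede you do not have one. None of that machinery is needed, and it is not clear any of it would suffice: there is no general principle forbidding a polynomial into a type-$1$ minimal set from depending on inputs drawn from a type-$3$ neighbourhood.

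The trick you are missing is a two-line congruence chase, and it does not require passing to any quotient first. Freeze the $\chi^\beta$-block at the value it takes at coordinate $i_0$. For any $j\in I$,
\[
x^j = f\bigl((\vec\chi^\beta)^j,(\vec\chi^\alpha)^j\bigr) \;\equiv_\beta\; f\bigl((\vec\chi^\beta)^{i_0},(\vec\chi^\alpha)^j\bigr) \;\equiv_\alpha\; f\bigl((\vec\chi^\beta)^{i_0},(\vec\chi^\alpha)^{i_0}\bigr) = x^{i_0},
\]
the first step because the $\chi^\beta$-generators vary only within $\beta$, the second because the $\chi^\alpha$-generators vary only within $\alpha$. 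Since $x^j \equiv_\alpha x^{i_0}$ already (both lie in $\{c,d\}$), the middle term is both $\beta$- and $\alpha$-congruent to $x^j$, hence $\alpha\beta$-congruent to it. Thus modulo $\alpha\beta$ one has $\aI{x}\equiv_{\alpha\beta} f\bigl((\vec\chi^\beta)^{i_0},\vec\chi^\alpha\bigr)$, and the $\beta$-block is gone. Now Lemma~\ref{lemma:Maroti}(1), applied in $\alg{S}/\alpha\beta$ to the type-$1$ atom $\bar\alpha$, gives dependence on a single edge variable, and your Step~2 finishes. The point is not any incompatibility of types~$1$ and~$3$, but simply that $\aI{x}$ has all coordinates in a single $\alpha$-class while the $\chi^\beta$-block can only move things by $\beta$.
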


  So let \[ \aI{x} = d_{|\{i_0,i_1\}} \oplus c_{|\mathrm{else}} = f(\vec{\chi}^\beta, \vec{\chi}^\alpha)\]belong to \(D\), where \( f \in \Pol{\card{I} + \card{E}}{\alg{S}} \) takes values in \(U\) and \( \vec{\chi}^\alpha \), \( \vec{\chi}^\beta \) enumerate the two sets of generators.

  Let \(j \in I\) be any vertex. Then \[ x^j = f \left( (\vec{\chi}^\beta)^j, (\vec{\chi}^\alpha)^j \right) \equiv_\beta f \left( (\vec{\chi}^\beta)^{i_0}, (\vec{\chi}^\alpha)^j \right) \equiv_\alpha f \left( (\vec{\chi}^\beta)^{i_0}, (\vec{\chi}^\alpha)^{i_0} \right) = x^{i_0} \equiv_\alpha x^j \]Hence \[ \aI{x} \equiv_{\alpha \beta} f \left( (\vec{\chi}^\beta)^{i_0}, \vec{\chi}^\alpha \right) \]But considered as a mapping from \(\alpha\)-classes into \(U\), \(f((\vec{\chi}^\beta)^{i_0}, \vec{v})\) depends modulo \( \alpha \beta \) on no more than one of the edge-variables, say \(f((\vec{\chi}^\beta)^{i_0},\vec{v}) = g(v_e) \) for some \(e  = \{j_0,j_1\} \in E\); since \( \aI{x} \) is not constant modulo \( \alpha \beta \), \( g \) cannot collapse traces to points, implying that \( \aI{x} = g(\chi^\beta_e)\) has the same \( \alpha \beta \)-equivalence pattern as \( \chi^\alpha_e \). The two equal coordinates of \( \aI{x} \) must match two equal coordinates of \( \chi^\alpha_e \) such that all other coordinates have a different value; since \( \card{V} > 4 \), the only set of such coordinates is \( \{j_0, j_1\} \);  but this implies \( \aI{x} = \chi^\alpha_e \), as desired.
\end{proof}
\renewcommand{\theclaim}{\arabic{section}.\arabic{thm}.\arabic{claim}}

The investigations of \emph{congruence modular} finitely decidable varieties identified quite early how constrained the congruence geometry of such varieties must be. In particular, it was discovered that the congruences above the solvable radical of a subdirectly irreducible algebra in such a variety were forced to be linearly ordered. Theorem \ref{MainThm:sigma comparable} allows us to remove the hypothesis of modularity:

\begin{cor}\label{cor:sigma meet-irreducible}
  Let \( \alg{S} \) be a finite subdirectly irreducible algebra with unary-type monolith. If the congruence interval above the solvable radical of \( \alg{S} \) is not linearly ordered, then \( \HSP{\alg{S}} \) is hereditarily finitely undecidable.
\end{cor}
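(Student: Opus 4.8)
The plan is to interpret \(\EQtwo\), the (finitely axiomatizable, hereditarily finitely undecidable) class of two disjoint equivalence relations, semantically into the finite members of \(\HSP{\alg{S}}\); as in Theorem \ref{MainThm:sigma comparable} one could use graphs instead, but the two ``independent'' pieces of congruence geometry furnished by the hypothesis match \(\EQtwo\) more naturally. I start with the standard reductions. By Fact \ref{fact:basics} --- and since each failure of its conclusions is itself witnessed by an interpretation --- I may assume the transfer principles hold in \(\var{V}:=\HSP{\alg{S}}\); the unary monolith \(\mu\) then forces \(\typset{\alg{S}}=\{1,3\}\). As \(\var{V}\) omits type \(2\), the relations \(\stackrel{s}{\sim}\) and \(\stackrel{ss}{\sim}\) agree on \(\Con{\alg{S}}\), so \(\Rad{\alg{S}}=\SSRad{\alg{S}}=:\sigma\), and by Theorem \ref{MainThm:sigma comparable} \(\sigma\) is comparable to every congruence. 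Using that \(\stackrel{ss}{\sim}\) is a lattice congruence and the \((3,1)\)-transfer principle (Theorem \ref{thm:SSsim is congruence}, Fact \ref{fact:basics}), every cover with bottom \(\geq\sigma\) is of type \(3\) and every type-\(1\) cover lies in \([\bot,\sigma]\); thus \(\Con{\alg{S}}\) is the ordinal sum of \([\bot,\sigma]\) with the ``type-\(3\) part'' \([\sigma,\top_S]\), and the Corollary's hypothesis is exactly that \([\sigma,\top_S]\) is not a chain.

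Since a finite non-chain lattice has an element with two distinct upper covers, I get congruences \(\gamma\prec\beta_1\), \(\gamma\prec\beta_2\) with \(\beta_1\neq\beta_2\) and \(\sigma\leq\gamma\), both of type \(3\); by Fact \ref{fact:basics} each has a two-element minimal set \(\{0_i,1_i\}\) carrying every self-map as a polynomial of \(\alg{S}\), and in \(\alg{S}/\gamma\) the images \(\bar\beta_1,\bar\beta_2\) are atoms with \(\bar\beta_1\wedge\bar\beta_2=\bot\). I also retain the monolith \(\mu\leq\sigma\leq\gamma\) and a two-element \((\bot,\mu)\)-trace \(\{c,d\}\). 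Arguing as in the proof of Theorem \ref{MainThm:sigma comparable}, I reduce to the case in which each \(\{0_i,1_i\}\) has trivial centralizer: with \(\beta:=\Cg{}{0_i}{1_i}\) and \(\alpha:=\beta\wedge\gamma\) one has \(\alpha\prectype{3}\beta\), \(\{0_i,1_i\}\) an \((\alpha,\beta)\)-trace, and \(\Cg{}{0_i}{1_i}\not\leq\gamma\); since the centralizer of \(\mu\) is strongly solvable (Lemma \ref{lemma:zeta solvable}) and hence \(\leq\sigma\leq\gamma\), the set \(\{0_i,1_i\}\) does not centralize \(\mu\), so were \(\mu\) to centralize \(\{0_i,1_i\}\) we would be done by Lemma \ref{lemma:disjoint centralizer} --- and subdirect irreducibility of \(\alg{S}\) makes any nontrivial centralizer of \(\{0_i,1_i\}\) contain \(\mu\). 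Triviality of these centralizers is what will let \(\alg{D}(\mathbb{E})\) ``know its own product structure'' through a first-order formula built from the finitely many binary polynomials of \(\alg{S}\), exactly as in the cited proof; similar bookkeeping with Lemma \ref{lemma:!C(K,N) or !C(N,K)} applied to \(\{0_i,1_i\}\), \(\mu\), and \(\{c,d\}\) secures the analogous facts for the \(\mu\)-gadget, whose type-\(1\) carrier admits no boolean operations (Lemma \ref{lemma:Maroti}) and so stays small.

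With this in hand I build a diagonal subpower \(\alg{D}(\mathbb{E})\leq\alg{S}^I\), for \(\mathbb{E}=\langle I;R_0,R_1\rangle\in\EQtwo\), generated by the constants together with \(\beta_1\)-flags \(1_{1|C}\oplus 0_{1|\mathrm{else}}\) (one per \(R_0\)-class \(C\)), \(\beta_2\)-flags \(1_{2|C'}\oplus 0_{2|\mathrm{else}}\) (one per \(R_1\)-class \(C'\)), and \(\mu\)-flags \(d_{|\{j\}}\oplus c_{|\mathrm{else}}\) naming the coordinates \(j\in I\). Proposition \ref{prop:power of minimal set} makes the carriers \(\{0_1,1_1\}^I\cap D\), \(\{0_2,1_2\}^I\cap D\), \(\{c,d\}^I\cap D\) definable; \(I\) is then recovered as a definable set of coordinate-names, \(R_0\) (resp.\ \(R_1\)) as ``\(j\) and \(j'\) lie in a common \(\beta_1\)-flag'' (resp.\ a common \(\beta_2\)-flag), and the disjointness \(R_0\cap R_1=\bot_I\) is forced by \(\bar\beta_1\wedge\bar\beta_2=\bot\) together with the recovered product structure. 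I expect the main obstacle to be proving that \(\{0_i,1_i\}^I\cap D\) consists of \emph{exactly} the \(R_i\)-saturated indicators: the choice of \(\EQtwo\) disposes of one half, since a boolean combination of \(R_i\)-class indicators is again an \(R_i\)-saturated indicator and the type-\(3\) carriers are closed under \emph{all} boolean operations (Fact \ref{fact:basics}), but the hard residue is excluding the possibility that some polynomial of \(\alg{S}\) carries one gadget alphabet nonconstantly into another (say \(\{0_1,1_1\}\) into \(\{0_2,1_2\}\) or into \(\{c,d\}\)), which would let the flag-systems contaminate each other and destroy the definability of coordinates and of ``\(\psi\) has value \(1_i\) at coordinate \(j\)''. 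I anticipate ruling this out by one more application of Lemma \ref{lemma:disjoint centralizer} to such an intertwining polynomial together with \(\mu\), or of the coherence dichotomy of Lemma \ref{lemma:binary witnesses}, after which the semantic interpretation of \(\EQtwo\) into \(\var{V}_{\mathrm{fin}}\) closes in the customary fashion.
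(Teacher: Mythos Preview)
Your broad plan --- interpret \(\EQtwo\) using two type-\(3\) minimal sets above the radical --- is the paper's own, but your execution diverges in ways that create the very gap you flag at the end. First, the contamination you worry about for the type-\(3\) carriers is a non-issue and does not need Lemma~\ref{lemma:binary witnesses}: if \(i,j\) lie in the same \(R_0\)-class then every generator is \(\beta_2\)-congruent at \(i\) and \(j\) (the \(\beta_1\)-flags are literally equal there, and \(\mu\leq\gamma<\beta_2\)), so any \(\aI{x}\in D\) has \(x^i\equiv_{\beta_2}x^j\); since \(\langle 0_1,1_1\rangle\in\beta_2\) would force \(\Cg{}{0_1}{1_1}\leq\beta_1\wedge\beta_2=\gamma\), an element of \(\{0_1,1_1\}^I\cap D\) must be \(R_0\)-saturated. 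Symmetrically for \(\{0_2,1_2\}^I\).

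The genuine problem is your \(\mu\)-flag scheme for naming coordinates. Nothing forbids a polynomial from mapping \(\{0_1,1_1\}\) nonconstantly into the \(\mu\)-trace: since \(c\equiv_\mu d\) while \(0_1\not\equiv_\mu 1_1\), such a map merely collapses modulo \(\mu\), which polynomials may freely do. Your set \(\{c,d\}^I\cap D\) can therefore be contaminated by the \(\beta_i\)-flags, and neither Lemma~\ref{lemma:disjoint centralizer} nor Lemma~\ref{lemma:binary witnesses} helps --- those control how type-\(3\) data fails to centralize type-\(1\) data, not the shape of polynomial images landing in a type-\(1\) trace. Maroti's Lemma does not save you either, because the inputs \(0_i,1_i\) are not \(\mu\)-congruent.

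The paper avoids this entirely by three moves you omit. It first invokes Idziak's modular result to reduce to the case where \(\sigma\) itself has two upper covers, then quotients by a meet-irreducible subcover of \(\sigma\) (available by Theorem~\ref{MainThm:sigma comparable}) so that \(\sigma=\mu\); now the \(K_a\) are \((\mu,\alpha_a)\)-minimal. Second, it defines \(\alg{D}(\alg{E})\) not by generators but as the set of all \(\aI{x}\) that are \(\alpha_0\)-constant on \(R_1\)-blocks and \(\alpha_1\)-constant on \(R_0\)-blocks, which gives \(K_a^I\cap D\cong\alg{2}^{I/R_a}\) for free. Third --- and this is what replaces your \(\mu\)-flags --- it recovers \(I\) as the set of pairs of atoms \(\langle\aI{y},\aI{z}\rangle\) (one from each boolean algebra) whose supports intersect; the intersection relation \(\bowtie\) is defined by a single explicit polynomial \(p_1(q(\cdot),\cdot)\), where \(q\) injects \(K_0\) into a \((\bot,\mu)\)-trace \(N\) (extracted from \(\neg\TC{\mu}{K_0}{\bot}\)) and \(p_1\) witnesses \(\neg\TC{N}{K_1}{\bot}\). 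The reduction \(\sigma=\mu\) is exactly what makes this bridge available.
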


\begin{proof}
  Due to the transfer prinicples (see Fact \ref{fact:basics}), we already know that \( \typset{\alg{S}} \subset \{ 1,3 \}\); without loss of generality, the solvable radical \( \mathrm{Rad}(\alg{S}) < \top_S \) and every cover above \( \Rad{\alg{S}} \) has boolean type. If \( \Rad{\alg{S}} \) were to have just one upper cover, then \( \alg{S}/\Rad{\alg{S}} \) would be subdirectly irreducible with boolean monolith; Idziak's characterization (\cite{Idziak97}) implies then the whole interval \( [\mathrm{Rad}(\alg{S}),\top] \) would be a chain. Hence it suffices to show that the radical having at least two upper covers \( \alpha_0, \alpha_1 \) leads to a contradiction.

  Theorem \ref{MainThm:sigma comparable} implies that every subcover of \( \Rad{\alg{S}} \) is meet-irreducible, so without loss of generality (by passing to a quotient by such a subcover) we may assume that \( \bot \stackrel{1}{\prec} \Rad{\alg{S}} =: \mu \).  Let \( K_a = \{ 0_a, 1_a \} \) be respectively \( (\mu, \alpha_a) \)-minimal sets (\(a \in \{0,1\} \)).

  We know that \( \neg \TC{K_a}{\mu}{\bot} \) for \(a = 0,1\), since each of these sets generate a congruence above the centralizer of \(\mu\). By Lemma \ref{lemma:disjoint centralizer}, we may also assume that \( \neg \TC{\mu}{K_a}{\bot} \). Let \begin{align*}
    p_0(c,0_0) &= p_0(c,1_0) \\ &\text{but} \\
    p_0(d,0_0) &\neq p_0(d,1_0)
\end{align*}witness this latter failure. Observe that \( p_0(d,0_0) \equiv_\mu p_0(d,1_0) \); hence there exists \( q \in \Pol{1}{\alg{S}} \) taking \( K_0 \) injectively into some \( (\bot, \mu)\)-trace \(N\). Since \( \mu = \Cg{\alg{S}}{q(0_0)}{q(1_0)} \), we must have \( \neg \TC{\{ q(0_0), q(1_0)\}}{K_1}{\bot} \). Choose a witnessing package \begin{align*}
    p_1(q(0_0),0_1) &= p_1(q(0_0),1_1) \\ &\text{but} \\
    p_1(q(1_0),0_1) &\neq p_1(q(1_0),1_1)
\end{align*}

  Our strategy is to interpret the undecidable class \( \EQtwo \) (see page \pageref{defn:two equivalence relations}) into the diagonal subpowers of \( \alg{S} \). So let \( \alg{E} = \langle I; R_0, R_1 \rangle \models \EQtwo \), and define a diagonal subpower \( \alg{D} = \alg{D}(\alg{E}) \leq \alg{S}^I \) as the subalgebra consisting of all \( \aI{x} \in \alg{S}^I \) such that \( \aI{x} \) is \( \alpha_0 \)-constant on each block of \( R_1 \) and \( \alpha_1 \)-constant on each block of \(R_0\). Note that, since \(0_0, 1_0\) are \(\alpha_0\)-congruent but not \( \alpha_1\), a point \( \aI{x} \in K_0^I \) belongs to \( \alg{D} \) iff it is constant on each \( R_0 \)-block. We conclude that \( \alg{D}_{|K_0} \) is canonically isomorphic to the boolean algebra \( \alg{2}^{I/R_0} \); the corresponding facts hold mutatis mutandis for \(K_1^I\). Furthermore, these two subsets are uniformly definable (by Lemma \ref{prop:power of minimal set}). Let \( \mathrm{AT}_a(v) \) be a formula asserting that \(v\) is an atom of the boolean algebra \(\alg{D}_{|K_a}\), and let \(H\) be the (definable) set of pairs \( \langle \aI{y}, \aI{z} \rangle \) such that \( \aI{y} = {1_0}_{|B_y} \oplus {0_0}_{|\mathrm{else}} \) is a \(K_0\)-atom and \( \aI{z} = {1_1}_{|B_z} \oplus {0_1}_{|\mathrm{else}} \) is a \(K_1\)-atom.

  Now, for each pair \( \langle \aI{y}, \aI{z} \rangle \in H\), the blocks coded by the two points are either empty or share one \( i \in I \). Write \( \aI{y} \bowtie \aI{z} \) if the intersection is nonempty. It suffices to show that the relation \( \aI{y} \bowtie \aI{z} \) is definable. Why is this so? Since \( R_0 \cap R_1 \) is trivial, every \( i \in I \) corresponds canonically to exactly one \( \langle \aI{y}_i, \aI{z}_i \rangle \in H\), namely \( \aI{y}_i = {(1_0)}_{|i / R_0} \oplus {(0_0)}_{|\mathrm{else}}\) and \( \aI{z}_i = {(1_1)}_{|i / R_1} \oplus {(0_1)}_{|\mathrm{else}}\). These two points are \(\bowtie\)-related by construction. But if \(\bowtie\) is definable, the structure \( \alg{E} \) can be recovered on the underlying set \( \bowtie \; = \{ \langle \aI{y}_i, \aI{z}_i \rangle \colon i \in I \} \) using the first-order theory of \( \alg{D} \), since \( \langle i, j \rangle \in R_0 \) (resp \(R_1 \)) iff \( \aI{y}_i = \aI{y}_j \) (resp \( \aI{z}_i = \aI{z}_j \)).

  To this end, observe: if \( \langle \aI{y}, \aI{z} \rangle \in H \) and \( i \in I \), \[ p_1(q(y^i),z^i) \neq p_1(q(y^i),0_1) \iff z^i \neq 0_1 \text{ and } y^i \neq 0_0 \]It follows that\begin{align*}
    p_1(q(\aI{y}), \aI{z}) &\neq p_1(q(\aI{y}),0_1) \\ &\Updownarrow \\
    p_1(q(y^i),z^i) &\neq p_1(q(y^i),0_1) \text{ for some } i \in I \\ &\Updownarrow \\
    y^i = 1_0 &\text{ and } z^i = 1_1 \text{ for some } i \in I \\ &\Updownarrow \\
    \aI{y} &\bowtie \aI{z} \hskip 1cm \qedhere
  \end{align*}
\end{proof}

\section{The strongly solvable radical is strongly abelian}\label{sec:Abelian}
\setcounter{thm}{0}

In this section, we prove Theorem \ref{MainThm:sigma strongly abelian}. The proof proceeds through three increasingly complex semantic interpretation constructions, all of a highly tame-congruence-theoretic nature.

\begin{lemma}\label{lemma:sigma centralizes minimal sets}
  Let \(\alg{S}\) be a subdirectly irreducible algebra with unary-type monolith \(\mu\) and strongly solvable radical \(\sigma\) which is abelian over \(\mu\) but not over \(\bot_S\). Let \(U = e(S)\) be any \((\bot_S,\mu)\)-minimal set. If \(\TC{\sigma}{\mu_{|U}}{\bot}\) fails in \(\alg{S}\), then \(\HSP{\alg{S}}\) is hereditarily finitely undecidable.
\end{lemma}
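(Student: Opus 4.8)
The plan is to mimic the interpretation strategy of Lemma~\ref{lemma:disjoint centralizer} and the proof of Theorem~\ref{MainThm:sigma comparable}, but now driving the undecidability off of a failure of $\TC{\sigma}{\mu_{|U}}{\bot}$ rather than of $\TC{K}{\delta}{\bot}$ for a boolean neighborhood. First I would extract a concrete witness of the hypothesis: since $\TC{\sigma}{\mu_{|U}}{\bot}$ fails and (by the remark following \eqref{eq:TCsimplification}, applicable because every operation on the minimal set $U$ is realized by a polynomial of $\alg{S}$, using Fact~\ref{fact:basics}\eqref{enum:emptytail}) the $\mu$-shifted variables can be collapsed into $U$, there is a polynomial $p(x,\vec{y})$, elements $a_1 \equiv_\sigma a_2$ and tuples $\vec{n}_1 \equiv_\mu \vec{n}_2$ from a $(\bot,\mu)$-trace $N \subset U$, with
\[
  p(a_1,\vec{n}_1) = p(a_1,\vec{n}_2) \quad\text{but}\quad p(a_2,\vec{n}_1) \neq p(a_2,\vec{n}_2),
\]
and I may take the range of $p$ inside $U$ and iterate so that $e \circ p$ is idempotent in the trace-variable(s). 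Because $\sigma$ is abelian over $\mu$ (so $\TC{\sigma}{\mu}{\mu}$ holds) but the above fails over $\bot$, the two values $p(a_2,\vec{n}_1), p(a_2,\vec{n}_2)$ are $\mu$-related but distinct, hence lie in a common trace; using the polynomial isomorphism of traces (Fundamental Theorem, part~\eqref{enum:TCTtype wel defined}) I can normalize so that the witness lives entirely inside a single trace $N$, and reduce $p$ to a binary polynomial $p(x,y)$ with $x$ ranging over a $\sigma$-pair $\{a_1,a_2\}$ and $y$ over $N$.

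Next I would set up the diagonal subpower. Given a graph $\mathbb{G} = \langle I; E\rangle$ with at least (say) five vertices, let $\alg{D} = \alg{D}(\mathbb{G}) \leq \alg{S}^I$ be generated by the diagonal together with points that code vertices by $\sigma$-patterns and edges by $\mu$-patterns inside $N$, in the style $\chi_i^\sigma := {a_2}_{|\{i\}} \oplus {a_1}_{|\text{else}}$ and $\chi_e^\mu := (n_2)_{|\{i,j\}} \oplus (n_1)_{|\text{else}}$ for $e = \{i,j\}$, plus one auxiliary point naming $N^I$. The key definability inputs are Proposition~\ref{prop:power of minimal set} (so $U^I \cap D$, and then $N^I \cap D$ via the trace, is $A$-definable), the clone embedding $\Pol{k}{\alg{A}} \hookrightarrow \Pol{k}{\alg{B}}$ (so $p$ acts coordinatewise), and the fact that $\alg{S}$ has only finitely many binary polynomials, which lets me express ``$\alg{D}$ knows its own product structure'' as a first-order formula exactly as in the proof of Theorem~\ref{MainThm:sigma comparable}. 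From there one shows the set of $\aI{x} \in N^I \cap D$ of the form $(n_2)_{|\{i_0,i_1\}} \oplus (n_1)_{|\text{else}}$ for two distinct vertices is definable, and then that any such $\aI{x}$ actually arising in $D$ forces $i_0 \Edge i_1$: decompose $\aI{x} = f(\vec{\chi}^\sigma, \vec{\chi}^\mu)$ with $f$ valued in $U$, push through $\sigma$ on the vertex-coordinates and $\mu$ on the edge-coordinates to see that modulo $\bot$ (not merely modulo $\mu$!) $\aI{x}$ depends on at most one edge-variable --- here is where I invoke the trace-level Maroti/Hobby--McKenzie dependence lemma (Lemma~\ref{lemma:Maroti}), since a map of a product of $\mu$-classes into a $(\bot,\mu)$-minimal set depends on $\leq 1$ variable --- and then a counting argument on the support of $\aI{x}$ (two coordinates equal to $n_2$ out of $\geq 5$) pins the relevant edge-variable to $\langle i_0,i_1\rangle$. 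This completes the semantic interpretation of graphs into $\HSP{\alg{S}}_{\mathrm{fin}}$, giving hereditary finite undecidability.

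The main obstacle I anticipate is the normalization in the first paragraph: getting the failure of $\TC{\sigma}{\mu_{|U}}{\bot}$ down to a \emph{single-trace}, \emph{binary} witness $p(x,y)$ with $p(a_1,y)$ collapsing $N$ to a point while $p(a_2,y)$ is injective on $N$ (the exact analogue of the clean witnesses produced in Lemmas~\ref{lemma:binary witnesses} and~\ref{lemma:!C(K,N) or !C(N,K)}), while simultaneously keeping $p(a_i, \cdot)$ a ``nice'' self-map of $U$. The delicate point is that $\sigma$ need not be an atom, so $\{a_1,a_2\}$ need not generate anything as tame as a boolean $K$; I expect to handle this by walking $a_1$ to $a_2$ through type-$1$ traces (as in the walk \eqref{eq:walk from ck to dk}) and locating the first step where centralization of $N$ breaks, thereby replacing the $\sigma$-pair by a type-$1$-trace pair, after which the abelianness of $\sigma$ over $\mu$ and the empty-tail property of $(\bot,\mu)$-minimal sets make the trace-isomorphism bookkeeping go through. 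The graph-interpretation machinery of the second and third paragraphs is then essentially routine given the template already established in this section and the previous one.
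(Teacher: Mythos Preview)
Your plan has a genuine gap in the interpretation step, and the paper's proof is quite different from what you outline.

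First a minor point: your invocation of Fact~\ref{fact:basics}\eqref{enum:emptytail} to collapse the $\mu$-variables is wrong --- that fact is only for types $2$ and $3$, whereas $U$ here is a type-$1$ minimal set and need not have empty tail, nor every operation realized by a polynomial. The reduction to a binary polynomial is still available, but the correct reason is Lemma~\ref{lemma:Maroti}(i) (equivalently, strong abelianness of $\mu$): a polynomial into $U$ restricted to a product of $\mu$-classes depends on at most one variable.

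The real problem is your graph interpretation. You propose vertex generators $\chi_i^\sigma = {a_2}_{|\{i\}} \oplus {a_1}_{|\text{else}}$ and then appeal to the ``$\alg{D}$ knows its own product structure'' trick from the proof of Theorem~\ref{MainThm:sigma comparable}. But that trick rests on two features of the boolean setting which are absent here: (a) the set $\{0,1\}^I\cap D$ is a definable boolean algebra, so its atoms --- and hence the vertex markers --- are first-order accessible; and (b) the centralizer of the boolean pair $\{0,1\}$ is trivial (via Lemma~\ref{lemma:disjoint centralizer}), so the formula $\Phi(\chi_i^\beta,\aI{y},\aI{z})$ really detects $y^i=z^i$. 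Your pair $\{a_1,a_2\}$ lives in a type-$1$ trace, carries no boolean operations, and its centralizer is typically large (indeed $\TC{\sigma}{\sigma}{\mu}$ holds by hypothesis), so neither (a) nor (b) is available. Without a definable supply of vertex atoms you cannot isolate the set of ``two-spike'' elements of $N^I\cap D$, and your edge-detection argument never gets off the ground. The ``push through $\sigma$ on the vertex-coordinates'' move likewise fails: changing the $\sigma$-generators only pins $\aI{x}$ down modulo $\sigma$, not modulo $\bot$, and $\sigma_{|U}$ need not be trivial.

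The paper avoids this entirely. It first locates the cover $\theta_0\prectype{1}\theta_1\leq\sigma$ where centralization of $\mu_{|U}$ breaks, and works with a $(\theta_0,\theta_1)$-minimal set $U'=e'(S)$ with trace representatives $a_0,a_1$. The binary polynomial $q$ then satisfies: for each $u'\in U'$, $q(u',\cdot)$ is either the identity on $U$ (``permutational'') or collapses each $\mu$-block of $U$ to a point, and this dichotomy is $\theta_0$-invariant. The index set is $V\sqcup\{\infty\}$; vertex and edge generators live in $\{a_0,a_1\}^I\subset (U')^I$, always permutational at $\infty$, and one extra parameter $\chi_\infty\in U^I$ marks the distinguished coordinate. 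Instead of trying to define atoms, the paper defines the set $\Gamma\subset (U')^I$ by two equations in $q$, and a preorder $\aI{x}\ll\aI{y}$ by comparing kernels of $q(\aI{x},\cdot)$ and $q(\aI{y},\cdot)$ on $U^I$. One shows $\aI{x}\ll\aI{y}$ iff $\mathrm{supp}(\aI{x})\supseteq\mathrm{supp}(\aI{y})$ (where support means the set of permutational coordinates); the resulting height-two poset on $\Gamma/{\sim}$ has the diagonal $\aI{a}_1$ at the bottom, the edge generators at level one, the vertex generators at level two, and the incidence of $\mathbb{G}$ is read off directly. No boolean algebra and no product-structure formula are needed.
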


\begin{proof}
  Since \(\TC{\mu}{\mu_{|U}}{\bot}\) always holds, we may climb the congruence lattice until we get a cover \(\mu \leq \theta_0 \stackrel{1}{\prec} \theta_1 \leq \sigma\) such that \(\TC{\theta_0}{\mu_{|U}}{\bot}\) holds and \(\TC{\theta_1}{\mu_{|U}}{\bot}\) does not. Fix a \((\theta_0,\theta_1)\)-minimal set \(U' = e'(S)\) with trace \(N'\) containing \(\theta_0\)-inequivalent elements \(a_0,a_1\). Since these elements generate \(\theta_1\) over \(\theta_0\), already \(\neg \TC{\Cg{}{a_0}{a_1}}{\mu_{|U}}{\bot}\), and we may take a witnessing package \begin{align*}
    t(a_0,\vec{b}_0) &= t(a_0,\vec{b}_1) \\ &\text{but} \\
    t(a_1,\vec{b}_0) &\neq t(a_1,\vec{b}_1)
  \end{align*}There is no loss of generality in assuming that the image of \(t\) is contained in \(U\).

  Since \(\mu\) is strongly abelian, we may assume that \(\vec{b}_0\) and \(\vec{b}_1\) differ only in one place (say the first), so that for \(q(v_1,v_2) = t(v_1,v_2,b^2,\ldots,)\), the polynomial \(q(a_0,x)\) is constant on \(\mu_{|U}\)-blocks while the polynomial \(q(a_1,x)\) permutes \(U\). (Observe that \(q(x,y) \in U\) for any \(x,y \in S\).) Of course we may by iterating \(q\) guarantee that for each \(u' \in U'\), the operation \(q(u',x)\) is idempotent. The same argument shows that for each \(u' \in U'\), \(q(u',x)\) is either the identity on \(U\) (in which case we call \(u'\) \emph{permutational}) or else squashes each \(\mu\)-block of \(U\) to a point (at which we call \(u'\) \emph{collapsing}). Since \(\TC{\theta_0}{\mu_{|U}}{\bot}\), these two properties are invariant under \(\theta_0\)-congruence.

  Let \(N \subseteq U\) be any trace; we have that \(q(a_0,N) = m_0\) for some \(m_0 \in U\). In fact, since \(\sigma\) is abelian over \(\mu\), we have \begin{align*}
    q(a_0,m_0) &= q(a_1,m_0) \\ &\Downarrow \\
    m_0 = q(a_0,u) &\equiv_\mu q(a_1,u) = u \qquad \text{ for any } u \in N
  \end{align*}and thus \(m_0 \in N\); more generally, we have that the polynomial \(v_1 \mapsto q(a_0,v_1) \) retracts each trace down to one of its points. Since \(N\) was a trace, there exists some \(m_1 \neq m_0\) in \(N\), which we fix for future use.

  We want to semantically embed graphs into the diagonal subpowers of \(\alg{S}\), so let \(\mathbb{G} = \langle V,E \rangle\) be a graph. Our index set \(I\) will equal \(V \sqcup \{ \infty \}\). Our subpower \(\alg{S}[\mathbb{G}]\) will be the subalgebra of \(\alg{S}^I\) generated by the diagonal together with \begin{itemize}
  \item for each vertex \(v \in V\), the element \[\aI{g}_v = {a_1}_{|\{v,\infty\}} \oplus {a_0}_{|\mathrm{else}}\]
  \item for each edge \(\{v_1,v_2 \} \in E\), the element \[ \aI{g}_{v_1 v_2} = {a_1}_{|\{v_1,v_2,\infty\}} \oplus {a_0}_{|\mathrm{else}}\]and
  \item the element \[\chi_\infty = {m_0}_{|V} \oplus {m_1}_{|\infty}\]
  \end{itemize}

  Recall our notational convention (page \pageref{defn:diagonal subpower}) that for \(s \in S\) we will use a boldface \(\aI{s}\) to denote the corresponding diagonal element; let \( \vec{\aI{s}} \) be a fixed enumeration of these diagonal elements. Observe that each generator, and hence every element of \(\alg{S}[\mathbb{G}]\), is constant modulo \(\theta_1\); and that \(\chi_\infty\) is also constant mod \(\theta_0\) (indeed, mod \(\mu\)). 
	
  \begin{claim}\label{claim:two_theta_values}
    Every element of \((U')^I \cap \alg{S}[\mathbb{G}]\) assumes at most two values\(\pmod {\theta_0}\), with one supported either on all of \(I\), or on \(\{v,\infty \}\) (for some \(v \in V\)), or on \(\{v_1,v_2,\infty\}\) (for some \(v_1 \Edge v_2\)).
  \end{claim}

  (As on page \pageref{defn:diagonal subpower}, we will drop the ``\( {} \cap \alg{S}[\mathbb{G}] \)'' when the context is unambiguous.)

  Let \(\aI{x} = t(\aI{g}_v, \ldots, \aI{g}_{v_1 v_2},\ldots, \chi_\infty, \vec{\aI{s}})\) represent an arbitrary element of \(\alg{S}[\mathbb{G}]\) all of whose coordinates lie in \(U'\). Without loss of generality (by precomposing with \(e'\)) \(t\) respects \(U'\); but then this operation is sensitive (mod \(\theta_0\)) to changes (mod \(\theta_1\)) in no more than one of its variables. Since all generators are constant (mod \(\theta_1\)), we conclude that the blocks of \(I\) on which \(\aI{x}\) is constant (mod \(\theta_0 \)) coincide with those of whichever generator sits at the active place. The claim follows immediately.

  We now identify a subset \(\Gamma\) of the universe, definable (using parameters for the diagonal elements and \(\chi_\infty\)) and a definable preorder \(\ll\) on \(\Gamma\).

  Set \[ \Gamma = \left\{ \aI{x} \in (U')^I : q(\aI{x},\aI{m}_0) = \aI{m}_0 \; \& \; q(\aI{x},\chi_\infty) = \chi_\infty \right\} \]and preorder it by \[ \aI{x} \ll \aI{y} \iff \forall \aI{u}, \aI{v} \in U^I \; q(\aI{x},\aI{u}) = q(\aI{x},\aI{v}) \rightarrow q(\aI{y},\aI{u}) = q(\aI{y},\aI{v}) \]Since the sets \(U^I\) and \((U')^I\) are definable (Proposition \ref{prop:power of minimal set}), it follws that \( \ll \) and its associated equivalence relation \( \sim \) are definable too. Let \(\mathrm{EQ}(v_1,v_2)\) be a formula defining the equivalence \(\sim\).

  The second conjunct defining \(\Gamma\) implies that if \(\aI{x} \in \Gamma\) then \(\aI{x}\) is permutational at infinity. (So, for example, \(\Gamma\) contains \(\aI{a}_1\) but not \( \aI{a}_0 \).) The first implies that any non-permutational factor of \(\aI{x}\) must collapse \(N\) to \(m_0\).	If \(\aI{x} \in \Gamma\), \(\aI{u}_1,\aI{u}_2 \in U^I\), and \(x^i\) is not permutational, then \(q(\aI{x},\aI{u}_1) = q(\aI{x},\aI{u}_2)\) implies \(u_1^i \equiv_\mu u_2^i\).

  \begin{claim}
    For \(\aI{x} \in \Gamma\), define \begin{align*} \mathrm{supp}(\aI{x}) &= \{i \in I : x^i \text{ is permutational} \} \\ &= \{i \in I : q(x^i,m_1) = m_1 \} \end{align*} (We already know that each support is either \(I\) or one of the sets \(\{v_1,v_2,\infty \}\) (\(v_1 \Edge v_2\)) or \(\{v,\infty\}\) (\(v \in G\)).) Then \[\aI{x} \ll \aI{y} \iff \mathrm{supp}(\aI{x}) \supseteq \mathrm{supp}(\aI{y}) \]
  \end{claim}

  (\(\Rightarrow\)): If \(v \in \mathrm{supp}(\aI{y}) \setminus \mathrm{supp}(\aI{x})\), take \(\aI{u} = q(\aI{g}_v,\aI{m}_1)\). Then \begin{align*} q(\aI{x},\aI{u}) = \chi_\infty &= q(\aI{x},\chi_\infty) \\ &\text{but} \\ q(\aI{y},\aI{u})_{|v} = q(y^v,m_1) = m_1 &\neq m_0 = q(\aI{y},\chi_\infty)_{|v} \end{align*}so \(\aI{x} \not \ll \aI{y}\).
	
  (\(\Leftarrow\)): For \(\aI{t},\aI{u} \in U^I\), \(q(\aI{x},\aI{t}) = q(\aI{x},\aI{u})\) is equivalent to \[ \aI{t}_{|\mathrm{supp}(\aI{x})} = \aI{u}_{|\mathrm{supp}(\aI{x})} \text{ and for } v \notin \mathrm{supp}(\aI{x}),\: t^v \equiv_\mu u^v\]which implies \[ \aI{t}_{|\mathrm{supp}(\aI{y})} = \aI{u}_{|\mathrm{supp}(\aI{y})} \text{ and for } v \notin \mathrm{supp}(\aI{y}),\: t^v \equiv_\mu u^v\]which is equivalent to \(q(\aI{y},\aI{t}) = q(\aI{y},\aI{u})\).
  
  As an immediate consequence, we have that every \(\aI{x} \in \Gamma\) is \(\sim\) to exactly one of \(\{ \aI{a}_1 \} \cup \{ \aI{g}_{v_1 v_2} : v_1 \Edge v_2 \} \cup \{ \aI{g}_v : v \in V \} \). The quotient partial order on \(\Gamma / \sim\) has height two, with \(\aI{a}_1\) at level zero, all the edges at level one and all the vertices at level two. 
	
  Let \(\mathrm{WHO}(v_1)\) be a formula asserting that \(v_1 \in \Gamma\) and \(v_1\) is at \(\ll\)-level two. We have just observed that the map \(w \mapsto \aI{g}_w / \sim\) is a bijection of \(V\) with the extension of \(\mathrm{WHO}(v_1)\) modulo \(\sim\) (which was already found to be a definable equivalence relation). Let \(\mathrm{EDGE}(v_1,v_2)\) be a formula asserting that there exists \(\aI{y} \in \Gamma\) at \(\ll\)-level one such that \(\aI{y} \ll v_1 \; \& \; \aI{y} \ll v_2\). Then these formulas recover the structure of \(\mathbb{G}\).
\end{proof}

The conclusions of the following lemma can be shown to hold for either of the solvable radical, or the strongly solvable radical, of any finite algebra \(\alg{A} \); however, the proof of this more general theorem is no more enlightening for our purposes, so we omit it.

\begin{lemma}\label{lemma:sigma_definable}
  If \(\alg{A}\) is any finite algebra in a finitely decidable variety with strongly solvable radical \(\sigma\), there exists a first-order formula with parameters from \(A\) which defines the congruence \(\sigma^I / \Theta\), uniformly for all \(\alg{D}/\Theta\), where \(I\) is any index set, \(\Delta \leq \alg{D} \leq \alg{A}^I\) is any diagonal subpower, and \(\Theta \leq \sigma^I \cap \alg{D} \in \Con{\alg{D}}\).
\end{lemma}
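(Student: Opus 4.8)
The plan is to realize $\sigma$ --- which, being a congruence of the finite algebra $\alg{A}$, is certainly first-order definable \emph{in} $\alg{A}$ --- by means of finitely many polynomials and minimal sets of $\alg{A}$ that remain available, uniformly, inside every diagonal subpower and inside every quotient of such a subpower by a congruence below $\sigma^{I}$.

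\textbf{Structural preliminaries.} Since $\sigma = \SSRad{\alg{A}}$, the quotient $\alg{A}/\sigma$ has trivial strongly solvable radical, so by Lemma \ref{lemma:subpowers omit type 1} together with the transfer principles (Fact \ref{fact:basics}) it omits the unary type and $\typset{\alg{A}/\sigma} \subseteq \{2,3\}$. Write $\sigma = \bigcap_{j=1}^{m} \gamma_{j}$, where $\gamma_{1}, \ldots, \gamma_{m}$ are the completely meet-irreducible congruences of $\alg{A}$ lying above $\sigma$; then each $\alg{A}/\gamma_{j}$ is finite and subdirectly irreducible with monolith $\gamma_{j}^{*}/\gamma_{j}$ of type $2$ or $3$, and by Fact \ref{fact:basics} every $(\gamma_{j}, \gamma_{j}^{*})$-minimal set $U_{j} = e_{j}(A)$ has empty tail --- in the boolean case, a two-element set all of whose self-maps are polynomials of $\alg{A}$. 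Because $\sigma^{I} = \bigcap_{j} \gamma_{j}^{I}$ and $\Theta \leq \sigma^{I} \leq \gamma_{j}^{I}$ for every $j$, it suffices to produce, for each $j$, a first-order formula $\chi_{j}(x,y)$ with parameters from $A$ defining $\gamma_{j}^{I}/\Theta$ uniformly over all $\alg{D}/\Theta$; the formula sought for $\sigma^{I}/\Theta$ is then $\bigwedge_{j} \chi_{j}$.

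\textbf{The boolean case.} Fix $j$ with $\gamma_{j}^{*}/\gamma_{j}$ of type $3$, so that $U_{j} = \{0_{j}, 1_{j}\}$ with $0_{j} \not\equiv_{\gamma_{j}} 1_{j}$. Since $\alg{A}/\gamma_{j}$ is subdirectly irreducible, any $\langle a,b \rangle \notin \gamma_{j}$ generates over $\gamma_{j}$ a congruence containing the monolith, so the minimal-set machinery of \cite{Hobby_McKenzie} Chapter 2 provides $g \in \Pol{1}{\alg{A}}$ with $g(A) \subseteq U_{j}$ and $g(a) \neq g(b)$; conversely $g(A) \subseteq U_{j}$ and $\langle a,b \rangle \in \gamma_{j}$ force $g(a) = g(b)$. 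Reading this off coordinatewise (a single $g$ separating one coordinate of $\aI{a}$ from that of $\aI{b}$ already gives $g^{I}(\aI{a}) \neq g^{I}(\aI{b})$) yields
\[
\langle \aI{a}, \aI{b} \rangle \in \gamma_{j}^{I} \iff \bigwedge_{g \in \Pol{1}{\alg{A}},\ g(A) \subseteq U_{j}} g^{I}(\aI{a}) = g^{I}(\aI{b}),
\]
a finite conjunction of polynomial equations in $\alg{D}$ with diagonal parameters; call it $\chi_{j}$. This $\chi_{j}$ survives division by $\Theta$: if $\langle \aI{a}, \aI{b} \rangle \in \gamma_{j}^{I}$, then for each such $g$ the values $g^{I}(\aI{a}), g^{I}(\aI{b})$ lie in $U_{j}^{I}$ and are $\gamma_{j}^{I}$-equivalent, but $\gamma_{j}^{I}$ is equality on $U_{j}^{I}$ (as $0_{j} \not\equiv_{\gamma_{j}} 1_{j}$), so $g^{I}(\aI{a}) = g^{I}(\aI{b})$ outright and $\chi_{j}$ holds in $\alg{D}/\Theta$; and if $\langle \aI{a}, \aI{b} \rangle \notin \gamma_{j}^{I}$, then the separating $g$ gives $g^{I}(\aI{a}) \neq g^{I}(\aI{b})$ with both values in $U_{j}^{I}$, whence --- since $\Theta \leq \gamma_{j}^{I}$, which is trivial on $U_{j}^{I}$ --- the pair $\langle g^{I}(\aI{a}), g^{I}(\aI{b}) \rangle$ is not in $\Theta$, so $\chi_{j}$ fails in $\alg{D}/\Theta$. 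Thus $\chi_{j}$ defines $\gamma_{j}^{I}/\Theta$ in $\alg{D}/\Theta$.

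\textbf{The main obstacle} is the affine case, where $\gamma_{j}^{*}/\gamma_{j}$ has type $2$ and the minimal set $U_{j}$ need not carry $\gamma_{j}$ trivially, so that $\not\equiv_{\gamma_{j}}$ cannot be cashed in for $\neq$ so cheaply. The remedy is to compose the minimal-set polynomials with the Maltsev-difference polynomial $d_{j} \in \Pol{3}{\alg{A}}$ supplied by the module structure of the trace --- reducing the question to detecting nonzero elements of that module --- and then to re-run the two implications above while tracking that $\Theta$ lies below $\gamma_{j}^{I}$ at every step. Once that is in place, the remaining points are routine: that the fixed finite stock of polynomials and minimal sets attached to $\alg{A}$ suffices as witnesses independently of $I$, $\alg{D}$, and $\Theta$, and that Proposition \ref{prop:power of minimal set} and the clone embedding following it make $U_{j}^{I}$ and the polynomials $g^{I}$ available inside every $\alg{D}$ and every quotient $\alg{D}/\Theta$.
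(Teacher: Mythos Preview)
Your decomposition $\sigma=\bigcap_j\gamma_j$ and your treatment of the boolean $\gamma_j$'s are fine, but the affine sketch has a real gap. Once $\gamma_j\prec\gamma_j^{*}$ is of type~2, the minimal set $U_j$ need not carry $\gamma_j$ trivially, so after forming $d_j(g(a),g(b),0_j)$ you still have to express ``this element lies in the $\gamma_j$-block of $0_j$ in $U_j$''. That block is just a fixed finite subset $Z=\{0_j,s_1,\dots,s_k\}$ of $U_j$, but it is in general \emph{not} the range of any idempotent polynomial of $\alg{A}$ (take $\alg{A}=\mathbb{Z}_4$, $\gamma_j$ the unique proper nontrivial congruence: then $U_j=\mathbb{Z}_4$, $Z=\{0,2\}$, and the only idempotent polynomials are constants and the identity), so Proposition~\ref{prop:power of minimal set} does not give you $Z^I\cap D$. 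The obvious fallback, writing ``$\aI{w}\in Z^I$'' as a disjunction $\bigvee_l \aI{w}=\aI{s}_l$ against diagonal parameters, asserts that all coordinates of $\aI{w}$ equal the \emph{same} $s_l$; this badly under-defines $\gamma_j^{I}$ in the subpower. Nor can you sidestep this by negating and using a snag condition tailored to the single minimal set $e_j$: in the $\mathbb{Z}_4$ example $e_j=\mathrm{id}$, and $\langle 0,2\rangle\in\gamma_j$ is nonetheless separated by $e_jf=\mathrm{id}$ with the Mal'cev identities holding, so that condition detects $\sigma$, not $\gamma_j$.

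The paper avoids the whole problem by never attempting to define any individual $\gamma_j$. Instead it defines the \emph{complement} of $\sigma$ in one shot, using the snag characterization from Chapter~7 of \cite{Hobby_McKenzie}: $\langle x,y\rangle\notin\sigma$ iff there exist an idempotent $e\in\Pol{1}{\alg{A}}$ whose range carries a Mal'cev polynomial $p$, and some $f\in\Pol{1}{\alg{A}}$, with $ef(x)\neq ef(y)$ and the Mal'cev equalities $p(ef(y),ef(x),ef(x))=ef(y)$, $p(ef(x),ef(x),ef(y))=ef(y)$ holding. This is a finite quantifier-free disjunction over the (finitely many) pairs $(e,f)$; it is true exactly when $\Cg{}{x}{y}$ has a type~$2$ or~$3$ cover below it (which, by the empty-tail property of Fact~\ref{fact:basics}\eqref{enum:emptytail}, is where the Mal'cev $p$ comes from), and the snag theory guarantees it is false whenever every such cover has type~1. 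Being quantifier-free it transfers to subpowers, and the paper then checks directly that it is stable under factoring by any $\Theta\leq\sigma^I$. The upshot is that working with $\sigma$ globally --- rather than with each $\gamma_j$ --- is what makes the affine case go through without having to pin down $\gamma_j|_{U_j}$.
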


\begin{proof}
  The argument comes from the theory of snags (see \cite{Hobby_McKenzie} Chapter 7). Let \(E(\alg{A})\) denote the collection of all idempotent polynomials with nontrivial range, and for each \(e \in E(\alg{A})\) choose \(p \in \Pol{3}{\alg{A}}\) which is Malcev on the image of \(e\) if any such polynomial exists; if none, then let \(p\) be second projection. Then we have that a pair \(\langle x, y \rangle\) fails to belong to \(\sigma\) iff there is a congruence cover \(\alpha \stackrel{2,3}{\prec} \beta\) below \(\Cg{}{x}{y}\) iff the following first-order formula is satisfied: \begin{align*}\bigvee_{e \in E(\alg{A})} \bigvee_{f \in \Pol{1}{\alg{A}}} & ef(y) = p(ef(y),ef(x),ef(x)) = p(ef(x),ef(x),ef(y)) \\ &\quad \neq p(ef(x),ef(x),ef(x)) = ef(x) \end{align*}The formula is clearly false if every cover below \(\langle x,y \rangle\) has type 1, while a cover of boolean or affine type will guarantee the formula's truth, since the minimal sets of that cover have empty tails and hence Malcev polynomials. This proves that the indicated formula defines \(\sigma\) in \(\alg{A}^1\), and its truth is preserved by factoring out by congruences under \(\sigma\).
	
  Now since the defining formula is quantifier-free, it is preseved in subpowers. Finally, if \(\aI{x} \equiv_{\sigma^I} \aI{y}\), \(e \in E(\alg{A})\), \(p(v_1,v_2,v_3) = v_2\) and \(f \in \Pol{1}{\alg{A}}\), \[p(ef(\aI{y}),ef(\aI{x}),ef(\aI{x})) = p(ef(\aI{x}),ef(\aI{x}),ef(\aI{y})) = p(ef(\aI{x}),ef(\aI{x}),ef(\aI{x})) = ef(\aI{x}) \]which is preserved under factoring out \(\Theta\). On the other hand, if \(x^i \not \equiv_\sigma y^i\), then the polynomials which witness \[ef(y^i) \equiv_\theta p(ef(y^i),ef(x^i),ef(x^i)) \not \equiv_\theta ef(x^i) \](\(\theta\) being the projection of \(\Theta\) into the \(i\)th coordinate) also witness it in \(\alg{D}\).
\end{proof}

\begin{defn} Let \(\alg{A} \) be any algebra, \(U \subseteq A\), and \(\sigma\) be the strongly solvable radical of \(\alg{A}\). We write
  \begin{enumerate}
  \item \( \PolGrp{\alg{A}}{U} := \Pol{1}{\alg{A}_{|U}} \cap \mathfrak{S}(U) \) for the group of permutations of \(U\) realized as polynomials of \(\alg{A}\), and
  \item \( \TwinGrp{\alg{A}}{U} \) for the subgroup consisting of those \(f \in \PolGrp{\alg{A}}{U} \) such that for some term \(t(v_0, \ldots, v_n) \) and some \( \vec{d} \equiv_\sigma \vec{e} \) we have \[\alg{A}_{|U} \models v_0 = t(v_0, \vec{e}) \; \& \; f(v_0) = t(v_0, \vec{d}) \](Such a permutation is known as a \( \sigma \)-twin of the identity.)
  \end{enumerate}
\end{defn}

A straightforward computation shows that \( \TwinGrp{\alg{A}}{U} \) is normal in \( \PolGrp{\alg{A}}{U} \).

Note that there is nothing special about the solvable radical in this context; we can define \( \alpha \)-twins for any congruence \( \alpha \), but since we will be exclusively concerned with \( \sigma \)-twins in this investigation, we will leave the definition more specialized so as to avoid needing a third parameter in the symbol \(\TwinGrp{\alg{A}}{U} \).

\begin{prop}\label{prop:Pi acts transitively}
  Let \( \alg{A} \) be a finite algebra. If \(\bot_A \stackrel{1}{\prec} \mu \) in \( \Con{\alg{A}} \) and \(U\) is \( (\bot,\mu) \)-minimal, then
  \begin{enumerate}
    \item \( \PolGrp{\alg{A}}{U} \) acts transitively by polynomial isomorphisms on the set of traces inside \(U\);
    \item the action of \( \PolGrp{\alg{A}}{U} \) on the body of \(U\) has at most two orbits; 
    \item if some \(f \in \PolGrp{\alg{A}}{U} \) nontrivially permutes some trace, then \( \PolGrp{\alg{A}}{U} \) acts transitively on the body of \(U\).
  \end{enumerate}
\end{prop}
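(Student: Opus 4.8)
The plan is to establish (1) directly from congruence generation and then read off (2) and (3) from the structure of the polynomial‑permutation group of a single trace. For (1), fix traces $N,N'$ in $U$, distinct $a\ne b$ in $N$ and distinct $a'\ne b'$ in $N'$. Because $\mu$ is an atom, $\mu=\Cg{\alg{A}}{a}{b}$, so $\langle a',b'\rangle$ is produced by a chain $a'=c_0,c_1,\dots,c_m=b'$ with $\{c_l,c_{l+1}\}=\{h_l(a),h_l(b)\}$ for suitable $h_l\in\Pol{1}{\alg{A}}$. Every link lies in $\mu$, so all $c_l$ are $\mu$‑equivalent to $a'$, and applying the idempotent $e$ (where $U=e(A)$) puts each $e(c_l)$ into $(a'/\mu)\cap U=N'$. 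Since $e(c_0)=a'\ne b'=e(c_m)$, some link has $e(c_j)\ne e(c_{j+1})$; then $e\circ h_j$ is a polynomial self‑map of $U$ whose restriction to $N$ takes the two distinct values $e(c_j),e(c_{j+1})$, so by the standard dichotomy for self‑maps of a minimal set it must be a permutation of $U$, i.e.\ an element $g\in\PolGrp{\alg{A}}{U}$. A polynomial carries $\mu|_U$‑classes onto $\mu|_U$‑classes, so $g(N)$ is the $\mu|_U$‑class of $e(c_j)$, namely $N'$; thus $g|_N\colon N\to N'$ is a polynomial isomorphism and transitivity on traces follows.

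For (2) and (3) I would fix one trace $N$ and observe that, since every element of $\PolGrp{\alg{A}}{U}$ permutes the traces and, by (1), all traces lie in one orbit, the orbits of $\PolGrp{\alg{A}}{U}$ on the body correspond bijectively to the orbits on $N$ of the group $H=\{\,g|_N : g\in\PolGrp{\alg{A}}{U},\ g(N)=N\,\}$. Moreover $H=\PolGrp{\alg{A}}{N}$: any polynomial permutation $p$ of $\alg{A}_{|N}$ is the restriction of some $\tilde p\in\Pol{1}{\alg{A}}$ with $\tilde p(N)\subseteq N$, whence $e\circ\tilde p$ is a self‑map of $U$ that fixes $N$ setwise and does not collapse it, hence (same dichotomy) lies in $\PolGrp{\alg{A}}{U}$ and restricts to $p$. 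So it suffices to count the orbits of $\PolGrp{\alg{A}}{N}$ acting on $N$.

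By the Fundamental Theorem $\alg{A}_{|N}$ is polynomially equivalent to a finite $G$‑set and (by standard minimal‑set theory) is simple; hence $\PolGrp{\alg{A}}{N}=G$ and the polynomial operations of $\alg{A}_{|N}$ are just the $G$‑translations, projections, and constants. If $\card{N}\ge 3$, then were $G$ not transitive the orbit partition — or, when $G$ is trivial, any proper nontrivial partition of $N$, which then is a congruence since $\alg{A}_{|N}$ is polynomially a bare set — would be a congruence strictly between $\bot$ and $\top$, contradicting simplicity; so $G$ is transitive and there is one orbit. If $\card{N}=2$, then $G$ is trivial (two orbits) or all of $\mathfrak{S}(N)$ (one orbit). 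In every case the number of orbits is at most two, which is (2); and if some $f\in\PolGrp{\alg{A}}{U}$ permutes a trace nontrivially, then $\PolGrp{\alg{A}}{N}=H$ is nontrivial, so by the dichotomy just obtained $G$ is transitive on $N$ and therefore $\PolGrp{\alg{A}}{U}$ is transitive on the body, which is (3).

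The step I expect to carry the weight is the structural input on which the rest rests: the dichotomy that a polynomial self‑map of a $(\bot,\mu)$‑minimal set is a permutation or collapses every trace (used twice), the resulting identification $H=\PolGrp{\alg{A}}{N}$, and the simplicity of the algebra induced on a trace. I would cite these from the established theory of minimal sets rather than reprove them; everything else is bookkeeping with orbits and the chains witnessing congruence generation.
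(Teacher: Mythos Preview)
Your argument is correct. Part (1) is essentially the paper's own argument: both obtain a chain from congruence generation, find a link not collapsed inside $U$, and invoke the permutation/collapse dichotomy for polynomial self-maps of a minimal set.

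For (2) and (3), however, you take a genuinely different and more elaborate route than the paper. You pass to the setwise stabilizer $H$ of a single trace $N$, identify $H$ with $\PolGrp{\alg{A}}{N}$, and then appeal to the simplicity of $\alg{A}_{|N}$ and the structure of $G$-sets to count $H$-orbits on $N$. This is sound, but it imports more tame-congruence machinery than is needed. The paper instead observes that the chain built in (1) already does the work: fixing $a_1\neq b_1$ in one trace, every link $\{u_j,u_{j+1}\}$ of the chain is literally $\{f_j(a_1),f_j(b_1)\}$ for some $f_j\in\PolGrp{\alg{A}}{U}$, so the arbitrary body element $b_2$ at the end of the chain lies in $\mathcal{O}(a_1)\cup\mathcal{O}(b_1)$. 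That is (2) in one line. For (3), if some $f$ moves a point $a_1$ of some trace, take $b_1=f(a_1)$; then $\mathcal{O}(a_1)=\mathcal{O}(b_1)$ and the body has a single orbit. Your approach buys a cleaner conceptual picture (orbits on the body correspond to orbits of the trace-stabilizer on the trace), at the cost of invoking simplicity of the trace and the classification of its polynomial clone; the paper's approach is shorter and uses nothing beyond the chain already constructed.
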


\begin{proof}
  That \( \PolGrp{\alg{A}}{U} \) acts on traces is an easy consequence of the fact that \( \mu \) is a congruence of the algebra.

  To transitivity: \(\mu\) is generated by any of its nontrivial pairs, so let \(N_i \subseteq U\) (\(i = 1,2\)) be traces containing elements \(a_i \neq b_i\). Then we can string a chain of elements \[ a_2 = u_0 \neq u_1 \neq \cdots \neq u_{m+1} = b_2\]where \( \{u_j, u_{j+1} \} = \{f_j(a_1),f_j(b_1)\} \) for some \(f_j \in \PolGrp{\alg{A}}{U} \). Then \(f_m(N_1) = N_2 \). This argument actually shows that \(b_2 \in \PolGrp{\alg{A}}{U}(a_1) \cup \PolGrp{\alg{A}}{U}(b_1)\), which proves the second and third statements.
\end{proof}

\begin{lemma}\label{lemma:twins act trivially}
  Let \(\alg{S}\) be a finite subdirectly irreducible algebra with type-1 monolith \(\mu\) and strongly solvable radical \( \sigma \) satisfying \( \TC{\sigma}{\sigma}{\mu} \). Let \(U = e(S)\) be a \( (\bot,\mu) \)-minimal set. If \( \TwinGrp{\alg{S}}{U} \) nontrivially permutes some trace, then \( \HSP{\alg{S}} \) is hereditarily finitely undecidable.
\end{lemma}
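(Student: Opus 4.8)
The plan is to prove the statement directly, by semantically interpreting the class of all graphs (or, if the bookkeeping so demands, the class $\EQtwo$) into the class of finite diagonal subpowers of $\alg{S}$, and hence into the finite members of $\HSP{\alg{S}}$; the construction is modeled on that of Lemma~\ref{lemma:sigma centralizes minimal sets}, but is necessarily more involved, since the combinatorial gadget here comes from a \emph{permutation} rather than from a collapsing polynomial.

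First I would convert the twin hypothesis into a workable binary witness. By definition of $\TwinGrp{\alg{S}}{U}$ there are a term $t(v_0,\vec{v})$ and tuples $\vec{d}\equiv_\sigma\vec{e}$ with $t(v_0,\vec{e})=v_0$ and $t(v_0,\vec{d})=f(v_0)$ on $U$, where $f$ moves some element $n_0$ of a trace $N\subseteq U$ to $n_1=f(n_0)\neq n_0$. Composing with $e$, connecting $\vec{e}$ to $\vec{d}$ by a chain of single-coordinate moves through type-1 traces (available because $\sigma=\SSRad{\alg{S}}$ is $\stackrel{ss}{\sim}$-related to $\bot$, so every $\sigma$-class is connected through such traces), and passing to the first step at which the accumulated map is nontrivial on the body of $U$, one arrives at a binary polynomial $p(x,v_0)$, a type-1 trace $N^{\ast}$ containing $\alpha^{\ast}$-inequivalent elements $d^{\ast},e^{\ast}$ (for some cover $\alpha^{\ast}\prectype{1}\beta^{\ast}\leq\sigma$), and a permutation $g\in\PolGrp{\alg{S}}{U}$ moving a body element, such that $p(e^{\ast},v_0)=v_0$ and $p(d^{\ast},v_0)=g(v_0)$ for all $v_0$ in the body of $U$. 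By the last part of Proposition~\ref{prop:Pi acts transitively} the group $\PolGrp{\alg{S}}{U}$ then acts transitively on the body; note also that $g(v)\equiv_\sigma v$ for every $v$, because $d^{\ast}\equiv_\sigma e^{\ast}$ and $\sigma$ is a congruence.

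Given a graph $\mathbb{G}=\langle V,E\rangle$ with $\card{V}$ large enough, I would build $\alg{D}(\mathbb{G})\leq\alg{S}^{I}$ on $I=V\sqcup\{\infty\}$, generated by the diagonal together with a vertex element $\chi_v$ equal to $d^{\ast}$ on $\{v,\infty\}$ and $e^{\ast}$ elsewhere, an edge element $\chi_{v_1v_2}$ equal to $d^{\ast}$ on $\{v_1,v_2,\infty\}$ and $e^{\ast}$ elsewhere, and a reference element assembled from $n_0,n_1$. The sets $(N^{\ast})^{I}\cap D$ and $U^{I}\cap D$ are definable by Proposition~\ref{prop:power of minimal set}, and --- the new ingredient compared with Lemma~\ref{lemma:sigma centralizes minimal sets} --- Lemma~\ref{lemma:sigma_definable} furnishes a definable predicate for $\sigma^{I}$-equivalence on $\alg{D}$, which replaces the ad hoc handling of $\mu$-blocks there. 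Following the template of Lemma~\ref{lemma:sigma centralizes minimal sets}: (a) Maroti's Lemma (Lemma~\ref{lemma:Maroti}), applied to maps of products of $\beta^{\ast}$-classes into the type-1 minimal set, shows every element of $(N^{\ast})^{I}\cap D$ assumes at most two values, supported on $I$, on some $\{v,\infty\}$, or on some $\{v_1,v_2,\infty\}$ with $v_1\Edge v_2$; (b) one isolates a definable set $\Gamma$ of ``$g$-active'' points together with a definable preorder $\ll$ on it, built from $p$, the diagonal, and the $\sigma^{I}$-predicate, whose quotient has height two, with $\bot$ at the bottom, the edges at level one and the vertices at level two; and (c) the incidence relation is recovered as: $v_1\Edge v_2$ iff some level-one element of $\Gamma$ is $\ll$ both $\chi_{v_1}$ and $\chi_{v_2}$.

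I expect the main obstacle to be twofold. In the reduction step, one must check that the maps occurring along the $\sigma$-connecting chain are sufficiently well-behaved --- in particular that the ``first active'' step genuinely produces a permutation, and that the (possibly nonempty) tails of the relevant minimal sets do not spoil the identity $p(e^{\ast},-)=\mathrm{id}$ on the body; subdirect irreducibility and the $(\bot,\mu)$-minimality of $U$ are what make this go through. And in the interpretation, since the twin acts by permutations, the convenient ``collapse a congruence class'' phenomenon exploited in Lemma~\ref{lemma:sigma centralizes minimal sets} is unavailable, so $\Gamma$ and $\ll$ must be designed so that the fine distinction of coordinates effected by $g$ is detected through the $\sigma^{I}$-predicate of Lemma~\ref{lemma:sigma_definable}; as usual, the converse directions --- that the only elements of $\alg{D}(\mathbb{G})$ of the prescribed shape are the intended $\chi_v$ and $\chi_{v_1v_2}$ --- are verified using Maroti's Lemma, the hypothesis $\TC{\sigma}{\sigma}{\mu}$, and the lower bound on $\card{V}$.
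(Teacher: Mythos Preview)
Your outline captures the general spirit of the argument (interpret graphs via a preorder on a definable set inside some algebra built from \(\alg{S}\)), but it misses the two structural innovations on which the paper's proof rests, and without them the ``main obstacle'' you correctly identify is not actually resolved.

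First, the paper does \emph{not} work in a diagonal subpower. It explicitly says the plan here is ``a bit more complicated'': one interprets \(\mathbb{G}\) into a \emph{quotient} \(\alg{C}[\mathbb{G}]=\alg{D}(\mathbb{G})/\Theta\), where \(\Theta\leq\sigma^{I}\) is a hand-built congruence that is never shown to be definable. The index set is doubled, \(I=V^{\pm}\sqcup\{\infty\}\), and the generators are not characteristic elements in a single trace as you propose, but rather elements of \(U^{I}\) all of whose coordinates lie in a single \(\TwinGrp{\alg{S}}{U}\)-orbit, carrying ``spikes'' (a \(\mu\)-inequivalence between \(v^{+}\) and \(v^{-}\)) at one or two \(V\)-blocks. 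The congruence \(\Theta\) is then defined to identify pairs with the same spike pattern and label; it is precisely this collapsing-by-fiat that substitutes for the missing ``collapse a congruence class'' phenomenon. Your sketch stays in the subpower, and you do not propose any replacement for \(\Theta\); the sentence ``\(\Gamma\) and \(\ll\) must be designed so that the fine distinction \(\ldots\) is detected through the \(\sigma^{I}\)-predicate'' does not say how, and the paper's authors evidently could not make that work either.

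Second, your reduction to a single binary witness \(p(x,v_{0})\) with \(p(e^{\ast},-)=\mathrm{id}\) and \(p(d^{\ast},-)=g\) is not how the paper proceeds, and it is not clear that it suffices. The paper never isolates one twin; instead it takes a \emph{universal term} \(T(v_{0},\ldots,v_{n})\) realizing every element of \(\TwinGrp{\alg{S}}{U}\), and the preorder \(\ll\) is defined by existentially quantifying over \(f\in\PolGrp{\alg{S}}{U}\) and \(\aI{g}\in\TwinGrp{\alg{C}}{\Gamma}\) and then universally over \(\aI{h}\in\TwinGrp{\alg{C}}{\Gamma}\), checking whether \(\aI{h}\) moves \(\aI{g}f(\aI{x})\) off its \(\Theta\)-class only when it also moves \(\aI{y}\). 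The fact that \(\TwinGrp{\alg{C}}{\Gamma}\) realizes the full product \((\TwinGrp{\alg{S}}{U})^{V\sqcup\{\infty\}}\) (shown via \(T\) and the \(\sigma^{I}\)-predicate) is what lets the preorder detect spike positions coordinate by coordinate. A single \(g\) would detect only ``\(g\) is nontrivial at some coordinate'', which does not give support-inclusion. Your Maroti-lemma step (a) is the right idea for controlling \(U^{I}\cap D\), and the paper does prove an analogue (its Claim on the structure of \(T\)), but the argument there uses the orbit structure of the generators, not a simple two-value dichotomy.
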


\begin{proof}
  The last statement of Proposition \ref{prop:Pi acts transitively} ensures that \( \PolGrp{\alg{S}}{U} \) acts transitively on the body of \(U\); the same may not be true of the induced action of \( \TwinGrp{\alg{S}}{U} \), but elementary group theory shows that \( \PolGrp{\alg{S}}{U} / \TwinGrp{\alg{S}}{U} \)	acts in a well-defined and transitive way on the orbits of the action by \( \TwinGrp{\alg{S}}{U} \). Since the action of \( \PolGrp{\alg{S}}{U} \) is transitive, we will use the symbol \( \mathcal{O}(a) \) exclusively to refer to the orbit of the element \(a \in \mathrm{Body}(U) \) under the action by \( \TwinGrp{\alg{S}}{U} \).

  \begin{claim}
    For each \(c \in \mathrm{Body}\), \[ | \mathcal{O}(c) \cap N | > 1 \]where \(N\) is the trace containing \(c\).
  \end{claim}
	
  Let \(g(a) = b \equiv_{\mu \setminus \bot} a\) and \(f(c) = a\), where \( g \in \TwinGrp{\alg{S}}{U} \) is the hypothesized nontrivial permutation of a trace and \( f \in \PolGrp{\alg{S}}{U} \). Then \( f^{-1} \circ g \circ f(c) \equiv_{\mu \setminus \bot} c \), which proves the claim.

  By Lemma \ref{lemma:sigma centralizes minimal sets}, we may assume that \( \TC{\sigma}{\mu_{|U}}{\bot} \). This immediately implies that if \( t(v_0, \ldots, v_n) \) is any term and \( \vec{c} \equiv_\sigma \vec{d} \), and if \(t(U,\vec{c}), t(U,\vec{d}) \subseteq U \) then these two polynomials are either both permutations of \(U\) or both collapse traces into points.

  Our plan is a bit more complicated this time around. Instead of semantically embedding graphs into diagonal subpowers of \(\alg{S}\), we will embed them into algebras \( \alg{C}[\mathbb{G}] = \alg{D}(\mathbb{G})/\Theta \), where \( \alg{D}(\mathbb{G}) \leq \alg{S}^I \) is a diagonal subpower of \( \alg{S} \) and \( \Theta \leq \sigma^I \). We will not attempt to show that \( \Theta \) is a definable congruence, uniformly or otherwise.

  Fix your favorite graph \( \mathbb{G} = \langle V, E \rangle \). Define \( V^\pm = \{ v^+,v^- \colon v \in V \} \) (the disjoint union of two copies of \(V\)), and set \( I = V^\pm \sqcup \{ \infty \} \); each of the sets \( \{ v^+, v^- \}\) as well as \( \{ \infty \} \) will be called a ``vertex block'' or ``\(V\)-block''. Let \( \alg{D} = \alg{D}(\mathbb{G}) \leq \alg{S}^I\) be generated by the set \( \bar{\Gamma} \) which is the disjoint union of the following three sets: \begin{itemize}
  \item \( \Gamma_0 \) is the set of those \( \aI{x} \in U^I \) which are constant on each \(V\)-block and constant\(\pmod \sigma \) on all of \(I\).
  \item \( \Gamma_V \) is the set of those \( \aI{x} \in \alg{S}^I \) such that for some \(a \in \mathrm{Body} \), \( x^i \in (a/\sigma) \cap \mathcal{O}(a) \) for all \(i \in I \), and for one \( v \in V \), \(x^{v^+} \equiv_{\mu \setminus \bot} x^{v^-}\), while for all \(w \neq v \), \(x^{w^+} = x^{w^-} \). For convenience, if \( \aI{x} \) and \(v\) are as just described, we write \( \mathrm{Label}(\aI{x}) = \langle v,x^{v^+} \rangle \).
  \item \( \Gamma_E \) is like \( \Gamma_V \); but instead of having one nonconstant vertex block, each point will have two, at the blocks of \(v\) and \(w\), where \( v \Edge w \), and write \( \mathrm{Label}(\aI{x}) = \langle v, x^{v^+}, w, x^{w^+} \rangle \).
  \end{itemize}

  We will refer to the non-constant vertex blocks as ``spikes''.

  Observe that since each generator is constant modulo \(\sigma\), every element of \(\alg{D}\) is too.
	
  \begin{claim}\label{claim:structure of T}
    \( \alg{D} \cap U^I \subset \bar{\Gamma}\), and for every polynomial \( \aI{f} \in \PolGrp{\alg{D}}{U} \) and every \( v \in V \), the \(v^+\) component of \(\aI{f}\) is the same function as the \(v^-\) component.
  \end{claim}

  (As is our convention, \( \PolGrp{\alg{D}}{U} \) should really more precisely be \( \PolGrp{\alg{D}}{U^I \cap D} \), but that would be cumbersome.)
	
  Both parts of the claim are consequences of Maroti's Lemma. To the first: let \( \aI{y} = et(\Gamma_0, \Gamma_V, \Gamma_E) \) be a typical element of \( \alg{D} \cap U^I \). There is one special input place where this term is sensitive to changes by \( \mu \); at all other places, flatten out all the spikes so that \( \aI{y} = et'(\Gamma_0,\aI{x}) \) where \( \aI{x} \in \Gamma \) is the element at the special place. Then if \( \aI{x} \in \Gamma_0 \) or if \(et'(\Gamma_0,\cdot) \) is not injective on \(U^I\) then at each coordinate \(t'({\Gamma_0}_{|i},\cdot) \) collapses \(\mu\) into points; under those hypotheses, \( \aI{y} \in \Gamma_0 \).
	
  On the other hand, if \( et'(\Gamma_0,\cdot) \) permutes \(U^I\), then \( \aI{y} \) has the same spike pattern that \( \aI{x} \) had (since every element of \(\Gamma_0\) is constant on V-blocks); furthermore, if \( \aI{x} \in \Gamma_V \cup \Gamma_E \), we can conclude that \( \aI{y} \) takes all its values from one \(\TwinGrp{\alg{S}}{U}\)-orbit, since all the coordinatewise polynomials \(et'({\Gamma_0}_{|i},\cdot) \) are \(\sigma\)-twins, and hence all in the same coset mod \( \TwinGrp{\alg{S}}{U} \). But this means that \( \aI{y} \in \bar{\Gamma} \) already.

  Similarly for the second part of the claim: let \( f(v_0) = et(v_0, \bar{\Gamma}) \in \PolGrp{\alg{D}}{U}\); then it is not possible for the special variable to be anything except the first. The claim follows, since the other parameters only vary up to \( \mu \) on vertex blocks.

  In fact, let \( T(v_0, \ldots, v_n) \) be a universal term for \( \TwinGrp{\alg{S}}{U} \), i.e. there exist pairwise-\(\sigma\) tuples \( \{\vec{d}_g \colon g \in \TwinGrp{\alg{S}}{U} \} \) so that \( g(v_0) = T(v_0, \vec{d}_g) \) for all \(g\). (We leave it to the reader to verify that such a term exists.) Then this term allows us to realize the full product \( {\TwinGrp{\alg{S}}{U}}^{V \sqcup \{ \infty \} } \) as polynomial permutations of \( U^I \); it follows that \( \PolGrp{\alg{D}}{U} \) is isomorphic to the inverse image of the diagonal subgroup under the canonical projection \[ {\PolGrp{\alg{S}}{U}}^{V \sqcup \{ \infty \}} \longrightarrow (\PolGrp{\alg{S}}{U} / \TwinGrp{\alg{S}}{U})^{V \sqcup \{ \infty \}} \]

  For the remainder of this proof, let \( \Gamma = U^I \cap \bar{\Gamma} = U^I \cap D \).

  We still have to define the congruence \(\Theta\). This is done as follows: \( \Theta \) will be generated by identifying those pairs \( \langle \aI{x}, \aI{y} \rangle \) such that \begin{itemize}
  \item \( \aI{x}, \aI{y} \in \Gamma_0\) and for all \( i \in I \), \( x^i \equiv_\mu y^i \), or
  \item \( \aI{x}, \aI{y} \in \Gamma_V\), \( \mathrm{Label}(\aI{x}) = \mathrm{Label}(\aI{y}) = \langle v,a \rangle \) and for all \(i \neq v^+ \), \( x^i \equiv_\mu y^i \), or
  \item \( \aI{x}, \aI{y} \in \Gamma_E\), \( \mathrm{Label}(\aI{x}) = \mathrm{Label}(\aI{y}) = \langle v,a,w,b \rangle \), and for all \(i \neq v^+,w^+ \), \( x^i \equiv_\mu y^i \).
  \end{itemize}and we set \( \alg{C} = \alg{C}[\mathbb{G}] = \alg{D}/\Theta \). We will usually write, e.g., \( \Gamma \) instead of \( \Gamma / \Theta \) when context makes it unambiguous.

  \begin{claim}
    \( \Theta_{|\Gamma} \) consists of just the generating pairs and no more.
  \end{claim}

  To see this, let \( \langle \aI{x}, \aI{y} \rangle \) be a generating pair and \( \aI{f} \in \Pol{1}{\alg{D}_{|U}} \). Then \( \langle \aI{f}(\aI{x}), \aI{f}(\aI{y}) \rangle \) is clearly a generating pair if \(\aI{f}\) collapses \(\mu\) to points, or if \( \aI{x} \) and \( \aI{y} \) belong to \( \Gamma_0 \), so let \( \aI{f} \in \PolGrp{\alg{D}}{U} \). Then if \( \aI{x}, \aI{y} \in \Gamma_V \) with \( \mathrm{Label}(\aI{x}) = \mathrm{Label}(\aI{y}) = \langle v,a \rangle \) then \begin{align*}
    a = x^{v^+} = y^{v^+} &\Rightarrow f^v(a) = f^v(x^{v^+}) = f^v(y^{v^+}) \\
    x^i \equiv_\mu y^i &\Rightarrow f^i(x^i) \equiv_\mu f^i(y^i)
  \end{align*}so \( \langle \aI{f}(\aI{x}), \aI{f}(\aI{y}) \rangle \) is again a generating pair. The proof for generating pairs from \( \Gamma_E \) is identical.

  By Lemma \ref{lemma:sigma_definable}, \( \sigma^I \) is a uniformly definable congruence; it follows that quantification over any of the groups \( \TwinGrp{\alg{D}}{\Gamma}, \PolGrp{\alg{D}}{\Gamma}, \TwinGrp{\alg{C}}{\Gamma}, \PolGrp{\alg{C}}{\Gamma} \) is uniformly first-order in the respective algebra. Of course, we also have that \( \Gamma = U^I \) (respectively \( U^I / \Theta \)) is a definable subset of both algebras as well, since it consists of precisely the fixed points of the polynomial retraction \(e\).

  \begin{claim}
    \begin{itemize}
      \item If \(g \in \TwinGrp{\alg{S}}{U} \), \(a \equiv_\sigma b\) and \(g(a) \equiv_\mu a \) then \(g(b) \equiv_\mu b \).
      \item \( \Gamma_0 \cap U^I \) is uniformly definable (using at most \(n \cdot \card{\TwinGrp{\alg{S}}{U}}\) parameters) in \( \alg{C} \).
    \end{itemize}
  \end{claim}

  The first part is true because \( \TC{\sigma}{\sigma}{\mu} \): \begin{align*}
    a = T(a,\vec{d}_{\mathrm{id}}) &\equiv_\mu T(a,\vec{d}_g) = g(a) \\ &\Downarrow \\
    b = T(b,\vec{d}_{\mathrm{id}}) &\equiv_\mu T(b,\vec{d}_g) = g(b)
  \end{align*}where \(T(v_0, \ldots, v_n) \) is the universal term for \( \TwinGrp{\alg{S}}{U} \) defined above. To the second: for each \( g \in \TwinGrp{\alg{S}}{U} \) let \( \vec{\aI{c}}_g \) be constants so that \[ T(\cdot, \vec{\aI{c}}_g) = \mathrm{id}_{|V} \oplus g_{|\infty} \]Then for \( \aI{x} \in U^I \) we have \[ T(\aI{x},\vec{\aI{c}}_g) \equiv_\Theta \mathrm{x} \; \iff \; g(x^\infty) \equiv_\mu x^\infty \]Hence \[\aI{x} \in \Gamma_0 \; \Rightarrow \; \forall g \in \TwinGrp{\alg{S}}{U} \;\left( T(\aI{x},\vec{\aI{c}}_g) \equiv_\Theta \aI{x} \; \rightarrow \; T(\aI{x},\vec{\aI{d}}_g) \equiv_\Theta \aI{x} \right) \](where \( \vec{\aI{d}}_g \) are the obvious diagonal elements), while if \( \aI{x} \in \Gamma_V \) (resp. \( \Gamma_E \)) with label \( \langle v,a \rangle \) (resp. \( \langle v,a,w,b \rangle \)) and \(g(a) \equiv_{\mu \setminus \bot} a \) then \( g(x^\infty) \equiv_\mu x^\infty \) so \[ T(\aI{x},\vec{\aI{c}}_g) \equiv_\Theta \aI{x} \text{ and } T(\aI{x},\vec{\aI{d}}_g) \not \equiv_\Theta \aI{x} \]This proves the claim.

  We are almost done: for the last step, define a preorder \(\ll\) on \( \Gamma \setminus \Gamma_0 \) by \begin{align*}
    \aI{x} \ll \aI{y} \iff &\exists f \in \PolGrp{\alg{S}}{U} \: \exists \aI{g} \in \TwinGrp{\alg{C}}{\Gamma} \: \left[ \aI{g} f (\aI{x}) \equiv_\sigma \aI{y} \: \& \right. \\ &\quad \left. \forall \aI{h} \in \TwinGrp{\alg{C}}{\Gamma} \: \left[ \aI{h} \aI{g} f (\aI{x}) \not \equiv_\Theta \aI{g} f (\aI{x}) \rightarrow \aI{h}(\aI{y}) \not \equiv_\Theta \aI{y} \right] \right] \end{align*}

  \begin{claim} 
    \begin{enumerate}
    \item If \( \aI{x}, \aI{y} \in \Gamma_V \) (resp. \( \Gamma_E \)) are labeled by the same vertex (resp. edge), they are \( \ll \)-equivalent.
    \item If \( \aI{x}, \aI{y} \in \Gamma_V \) (resp. \( \Gamma_E \)) are labeled by different vertices (resp. edges), they are \( \ll \)-incomparable.
    \item If \( \aI{x} \in \Gamma_E \) and \( \aI{y} \in \Gamma_V \) then \( \aI{x} \not \ll \aI{y} \).
    \item If \( \aI{x} \in \Gamma_V, \aI{y} \in \Gamma_E \), then \( \aI{x} \ll \aI{y} \) iff \( \aI{x} \) is labeled by one of the endpoints of the edge which labels \( \aI{y} \).
    \end{enumerate}
  \end{claim}

  This claim will complete the proof of the theorem, since up to \( \ll \)-biequivalence, vertices of \( \mathbb{G} \) correspond precisely to \( \ll \)-classes at level zero, edges to classes at level one, and two vertices are joined iff there is a class properly dominating both.

  \begin{enumerate}
  \item Say \( \aI{x}, \aI{y} \in \Gamma_E \), \( \mathrm{Label}(\aI{x}) = \langle v,a_1,w,b_1 \rangle \) and \( \mathrm{Label}(\aI{y}) = \langle v,a_2,w,b_2 \rangle \). Then \( \mathcal{O}(a_j) = \mathcal{O}(b_j) \) (\(j \in \{1,2 \} \)), and we can choose \( f \in \PolGrp{\alg{S}}{U} \) so that \(f \mathcal{O}(a_1) = \mathcal{O}(a_2) \). Then we can choose \( \{g^i\}_{i \in I} \in \TwinGrp{\alg{S}}{U} \) so that \( g^v f(a_1) = a_2 \), \( g^w f(b_1) = b_2 \), and \( g^i f(x^i) \equiv_\mu y^i \) for all other \(i\), and set \( \aI{g} = \bigoplus g^i \). Then in fact \(\aI{g} f (\aI{x}) \equiv_\Theta \aI{y} \) so \( \aI{x} \ll \aI{y} \) holds automatically. The proof is the same for \( \Gamma_V \) except easier.
  \item Say \( \aI{x} \) has a spike at a V-block where \( \aI{y} \) does not, say at \(v\). Then for every \( f \in \PolGrp{\alg{S}}{U} \) and every \( \aI{g} \in \TwinGrp{\alg{C}}{\Gamma} \), \(\aI{g} f (\aI{x}) \) has a spike at \(v\), which \(\aI{y}\) does not. Assume \( \aI{g} f (\aI{x}) \equiv_\sigma \aI{y} \). Choose \( h \in \TwinGrp{\alg{S}}{U} \) such that \( h g^v f (x^{v^+}) \equiv_{\mu \setminus \bot} g^v f (x^{v^+}) \); then \( h(y^{v^+}) \equiv_\mu y^{v^+} \). Let \( \aI{h} \in \TwinGrp{\alg{C}}{\Gamma} \) be \(h\) on \( \{v^\pm\} \) and the identity on all other vertex blocks; then \[ \aI{hg} f (\aI{x}) \not \equiv_\Theta \aI{g} f (\aI{x}) \; \& \: \aI{h}(\aI{y}) \equiv_\Theta \aI{y} \]
  \item The same as in (b).
  \item The direction (\( \Rightarrow \)) is the same as in (b). For (\( \Leftarrow \)), assume that \( \mathrm{Label}(\aI{x}) = \langle v,a_1 \rangle \), \(\mathrm{Label}(\aI{y}) = \langle v,a_2,w,b \rangle \). Choose \( f \in \PolGrp{\alg{S}}{U} \) with \( f(a_1) = a_2 \), and for \(i \neq v^{\pm}\) choose \( g^i \in \TwinGrp{\alg{S}}{U} \) so that \(g^i f(x^i) \equiv_\mu y^i \), \(g^v = \mathrm{id} \), \( \aI{g} = \bigoplus_i g^i \); then we have \(\aI{z} := \aI{g}f(\aI{x}) \equiv_\mu \aI{y} \) and \( z^{v^+} = y^{v^+} \). Consequently, if \( \aI{h}(\aI{z}) \not \equiv_\Theta \aI{z} \) then either \[ h^i(z^i) \not \equiv_\mu z^i \] for some \(i \in I \), in which case \(h^i(y^i) \not \equiv_\mu y^i \), or \[ h^v(y^{v^+}) = h^{v}(z^{v^+}) \neq z^{v^+} = y^{v^+}\]so in either case \(\aI{h}(\aI{y}) \not \equiv_\Theta \aI{y} \).
  \end{enumerate}The claim and the Lemma are proven.
\end{proof}

\begin{lemma}\label{lemma:C(sigma,sigma;bot)}
  Let \( \alg{S} \) be a finite subdirectly irreducible algebra with unary-type monolith \(\mu\) and strongly solvable radical \(\sigma\) satisfying \( \TC{\sigma}{\sigma}{\mu} \), \( \TC{\sigma}{\mu}{\bot} \), and \( \TC{\mu}{\sigma}{\bot} \) but not \( \TC{\sigma}{\sigma}{\bot} \). Then \( \HSP{\alg{S}} \) is hereditarily finitely undecidable.
\end{lemma}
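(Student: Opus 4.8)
As with the two preceding lemmas, the plan is to semantically interpret a hereditarily finitely undecidable class --- graphs will serve --- into diagonal subpowers of $\alg S$, after disposing of the easy cases. Since $\alg S$ is subdirectly irreducible with unary-type monolith, $\sigma$ is abelian over $\mu$, and $\sigma$ is not abelian over $\bot$, Lemmas~\ref{lemma:sigma centralizes minimal sets} and~\ref{lemma:twins act trivially} both apply: were $\TC{\sigma}{\mu_{|U}}{\bot}$ to fail for some $(\bot,\mu)$-minimal set $U$, or were $\TwinGrp{\alg S}{U}$ to move a trace, we would already be done, so assume throughout that $\TC{\sigma}{\mu_{|U}}{\bot}$ holds and $\TwinGrp{\alg S}{U}$ fixes every $(\bot,\mu)$-trace pointwise, for every $(\bot,\mu)$-minimal $U$; by Fact~\ref{fact:basics}, $\typset{\alg S}\subseteq\{1,3\}$.

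The first real step is to localize the failure of $\TC{\sigma}{\sigma}{\bot}$. Using the simplification \eqref{eq:TCsimplification} on the $\sigma$-side, fix a term $t$, elements $a_0\equiv_\sigma a_1$, and tuples $\vec b_0\equiv_\sigma\vec b_1$ with $t(a_0,\vec b_0)=t(a_0,\vec b_1)$ while $t(a_1,\vec b_0)\neq t(a_1,\vec b_1)$. Because $\TC{\sigma}{\sigma}{\mu}$ holds, those last two values are $\mu$-related, so after composing $t$ with an idempotent polynomial onto a $(\bot,\mu)$-minimal set $U$ we may assume $\mathrm{range}(t)\subseteq U$, with $t(a_1,\vec b_0)$ and $t(a_1,\vec b_1)$ distinct elements of a common trace $N\subseteq U$. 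Since $\sigma$ strictly exceeds $\mu$ here, no further reduction of $\vec b_0,\vec b_1$ to a trace pair is available; the multivariable polynomial $t(x,\vec y)$ has to be carried through the whole construction. The relevant tameness facts are in place: by $\TC{\sigma}{\mu_{|U}}{\bot}$, for $\sigma$-related tuples $\vec c\equiv_\sigma\vec d$ the polynomials $t(\cdot,\vec c)$ and $t(\cdot,\vec d)$, restricted to $U$, are \emph{both} permutations of $U$ or \emph{both} collapse the $\mu_{|U}$-blocks; and by Maroti's Lemma~\ref{lemma:Maroti}, in any diagonal subpower of $\alg S$ every polynomial sending $U^I$ into $U^I$ is governed, on each block of $I$, by the $\mu$-data of just one coordinate.

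Next I would run the interpretation along the lines of Lemma~\ref{lemma:twins act trivially}. For $\mathbb G=\langle V,E\rangle$ take $I=V^{\pm}\sqcup\{\infty\}$, let $\alg D=\alg D(\mathbb G)\leq\alg S^I$ be generated by the diagonal together with ``spiked'' generators --- one family indexed by the vertices and one by the edges, each placing an $a_0\to a_1$ spike on the corresponding vertex block(s), together with auxiliary generators carrying the $\vec b_0$-versus-$\vec b_1$ data and with $\Gamma_0$-generators that are constant on $V$-blocks and $\sigma$-constant on $I$ --- and then pass to $\alg C=\alg D/\Theta$, where $\Theta\leq\sigma^I$ identifies two equally-labelled spiked generators that agree modulo $\mu$ off their spikes, just as before. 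Lemma~\ref{lemma:sigma_definable} makes $\sigma^I/\Theta$ --- hence quantification over $\PolGrp{\alg C}{U}$ and $\TwinGrp{\alg C}{U}$ --- uniformly first-order, and Proposition~\ref{prop:power of minimal set} makes $U^I/\Theta$ definable. The structural core, which I expect to be the main obstacle, is to show --- invoking Maroti's Lemma, the triviality of $\TwinGrp{\alg S}{U}$ on traces, $\TC{\sigma}{\mu_{|U}}{\bot}$ to transport permutational-versus-collapsing behaviour coherently across $\sigma$-related parameters, and $\TC{\sigma}{\sigma}{\mu}$ to force every escaping discrepancy to be visible already modulo $\mu$ (hence inside a trace of $U$) --- that no polynomial of $\alg C$ can smear out a spike, merge two differently-labelled elements, or manufacture a spurious edge; concretely, that $\alg D\cap U^I$ is exactly the prescribed family of spiked elements and that $\Theta$ adds nothing to it beyond the defining identifications. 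With those invariants in hand, the witness $t$, applied coordinatewise with the two families of generators occupying its $x$- and $\vec y$-slots, fires precisely at the indices of $I$ where two spikes coincide, so one can define a height-two preorder $\ll$ on the labelled elements whose bottom $\ll$-classes recover $V$, whose next $\ll$-classes recover $E$, and with $v\Edge w$ expressed by the existence of a class dominating the classes of $v$ and $w$ --- exactly as in the closing Claim of Lemma~\ref{lemma:twins act trivially}. Hence $\HSP{\alg S}$ is hereditarily finitely undecidable.
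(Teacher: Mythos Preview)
Your reduction is right: after invoking Lemmas~\ref{lemma:sigma centralizes minimal sets} and~\ref{lemma:twins act trivially} you may assume $\TC{\sigma}{\mu_{|U}}{\bot}$ holds and that $\TwinGrp{\alg S}{U}$ fixes every trace pointwise, and the localization producing $t$, $a_0,a_1$, $\vec b_0,\vec b_1$, and the trace $M\ni m_0,m_1$ inside $U$ is exactly what the paper does.

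The gap is in the construction. You propose to transplant the Lemma~\ref{lemma:twins act trivially} architecture --- encode $\mathbb G$ in the \emph{generators} of $\alg D$ via ``$a_0\to a_1$ spikes'' on vertex blocks, then quotient by a $\Theta$ that glues equally-labelled spikes agreeing mod~$\mu$ off the spike. But the structural control in Lemma~\ref{lemma:twins act trivially} (that $\alg D\cap U^I\subset\bar\Gamma$, and that $\Theta_{|U^I}$ adds only the prescribed pairs) rested on two things: Maroti's Lemma, which constrains $\mu$-variation of maps \emph{into} $U$, and the nontrivial action of $\TwinGrp{\alg S}{U}$ on traces, which furnished the detection mechanism separating $\Gamma_0$ from $\Gamma_V\cup\Gamma_E$. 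Here your spikes are $\sigma$-differences living in the $\sigma$-class $A=a_0/\sigma$, not $\mu$-differences inside $U$; Maroti's Lemma says nothing about $A^I\cap D$, polynomials can move elements around inside $A$ rather freely, and you have just assumed away the twin action that did the detecting. There is no reason the only $a_0/a_1$-patterns in the generated subalgebra are the ones you put in, so the ``no polynomial can smear out a spike'' step --- which you flag as ``the main obstacle'' --- is not merely hard but is where this template actually fails.

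The paper's construction is architecturally different in a way your plan does not anticipate. The subpower $\alg D$ is taken to be \emph{all} $\sigma$-constant elements of $\alg S^I$ (so $\alg D$ depends only on $|V|$, not on $\mathbb G$), and the graph is encoded entirely in $\Theta$: one glues ${m_1}_{|v^+}\oplus{m_0}_{|\mathrm{else}}$ to ${m_1}_{|v^-}\oplus{m_0}_{|\mathrm{else}}$ for each vertex, and the analogous two-block pair for each edge. Since these generating pairs live in $U^I$, control of $\Theta_{|U^I}$ \emph{is} available via the twin-triviality assumption (Claim~\ref{claim:twins act the same}). The interpretation then runs through a device absent from your sketch: for $\ell$-tuples $\aI y\equiv_\sigma\aI b_0$ one works with equalizer sets $\Eset{\alg S}{b_0}{y^i}\subset A$, a first-order relation $\aI y_1\propto\aI y_2$ expressing that these equalizers are full at every $i\ne\infty$, and --- the real crux, Claim~\ref{claim:rectangular Psets} --- the definability in $\alg C$ of the product set $P(\aI y)=\bigl(\bigoplus_{i\ne\infty}\Eset{\alg S}{b_0}{y^i}\bigr)\oplus\bigl(A\setminus\Eset{\alg S}{b_0}{b_1}\bigr)$. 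The preorder $\ll$ is reverse inclusion of these $P$-sets on a definable family $\mathcal B$ of $\ell$-tuples, and vertices and edges are recovered from its first two levels. None of this equalizer-set machinery is in your plan, and I do not see a way to bypass it with the spiked-generator approach.
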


\begin{proof}
  Choose a package \begin{align*}
    c = t_0(a_0,\vec{b}_0) &= t_0(a_0,\vec{b}_1) \\ &\text{but} \\
    m_0 = t_0(a_1,\vec{b}_0) &\neq t_0(a_1,\vec{b}_1) = m_1
  \end{align*}witnessing \(\neg \TC{\sigma}{\sigma}{\bot} \), where \(a_0 / \sigma = a_1 / \sigma =: A \) and \(\vec{b}_0 \equiv_\sigma \vec{b}_1 \). Since \( \TC{\sigma}{\sigma}{\mu} \), \(m_0 \equiv_\mu m_1 \), and we may suppose that the range of \( t_0(v_0, \ldots, v_\ell) \) is included in a \( (\bot,\mu) \)-minimal set \(U\). Denote the trace containing the \(m_j\) by \(M\).

  We will be working with diagonal subpowers \( \alg{X} \leq \alg{S}^I \) and their quotients \(\alg{Y} = \alg{X}/\Theta \), where \( \Theta \leq \sigma = \sigma^I \cap X^2 \in \Con{\alg{X}} \). Lemma \ref{lemma:sigma_definable} once again implies that \( \sigma \) is a definable congruence in all such \( \alg{Y} \).

  We will wherever possible refer to elements of \(\alg{Y}\) with \( \aI{x} \) rather than \( \aI{x} / \Theta \), with the understanding that \( \aI{x} \in S^I \) is one representative. (Of course, this will necessitate showing that certain properties are well-defined.)

  For such algebras \( \alg{Y} \), and \( \vec{\aI{y}}_1, \vec{\aI{y}}_2 \equiv_\sigma \vec{\aI{b}}_0 \) define \[ \Eset{\alg{Y}}{\vec{\aI{y}}_1}{\vec{\aI{y}}_2} = \left\{ \aI{x} \equiv_\sigma \aI{a}_0 \colon \alg{Y} \models t_0(\aI{x},\vec{\aI{y}}_1) = t_0(\aI{x},\vec{\aI{y}}_2) \right\} \]In particular, we have \[ \Eset{\alg{S}}{\vec{b}_0}{\vec{b}_1} \subsetneq A \]and there is no loss of generality in assuming that the \( \vec{b}_j \) are chosen so that their equalizer set is maximal for being properly included in \(A\).

  We will be using \(\ell\)-tuples extensively, so to avoid a proliferation of vector notation we will reserve the letters \(b, y, z\) for \(\ell\)-tuples and \(a,x\) for single elements.
	
  The plan is as follows: We want to interpret the class of graphs with at least three vertices into \( \HSP{\alg{S}} \). Given such a graph \( \mathbb{G} = \langle V, E \rangle \), we will choose an index set \(I\) and a diagonal subpower \( \alg{D} \leq \alg{S}^I \), which will depend only on \(V\), and then a congruence \( \Theta \in \Con{\alg{D}} \) below \( \sigma^I \) (in fact, below \( \mu^I \)), which will depend on both \(V\) and \(E\), and set \( \alg{C} = \alg{D} / \Theta \). \(\Theta\) will be sparse in a sense we will make precise. Then we will define a set \( \mathcal{B} \subset C^\ell \), and show that a preorder \(\ll\) recovering the index set \(I\) is definable there; vertices will interpret as unions of two \(\ll\)-biequivalence classes, and the edge relation from \(\mathbb{G}\) will be first-order definable on these vertices. Here ``definable'' will include reference to \( |A| + 1 \) parameters (in addition to the diagonal).

  We begin with a graph \( \mathbb{G} = \langle V, E \rangle \), and set \(I = V^\pm \sqcup \{\infty\}\) as in Lemma \ref{lemma:twins act trivially}. Define \( \alg{D} \leq \alg{S}^I \) to be the subalgebra consisting of all elements which are constant modulo \( \sigma \). By the same logic applied in Claim \ref{claim:structure of T}, \( \PolGrp{\alg{D}}{U} \) consists of those \( \aI{f} \in (\PolGrp{\alg{S}}{U})^I \) such that all \( f^i \) belong to the same coset modulo \( \TwinGrp{\alg{S}}{U} \). (Here the coordinate functions \( f^{v^+},f^{v^-} \) may be different.) The relation \( \TC{\sigma}{\mu}{\bot} \) implies that a polynomial \( \aI{f}(v_0) = t(v_0, \vec{\aI{d}}) \) whose image is contained in \(U^I\) is either a permuation of \(U\) at all coordinates or collapses traces to points at all coordinates. We note for future reference that
	
  \begin{claim}\label{claim:twins act the same}if \(f_1,f_2 \in \PolGrp{\alg{S}}{U}\) belong to the same coset modulo \( \TwinGrp{\alg{S}}{U} \), and if \( f_1(M) = M = f_2(M) \) then by Lemma \ref{lemma:twins act trivially} \({f_1}_{|M} = {f_2}_{|M} \)
  \end{claim}In particular, this is true if these are the coordinate functions of some \( \aI{f} \in \PolGrp{\alg{D}}{U} \).

  Let \( \alg{C} = \alg{D} / \Theta \), where \(\Theta\) is the congruence on \( \alg{D} \) generated by identifying \begin{align*}
    {m_1}_{|\{v^+\}} \oplus {m_0}_{|I \setminus \{v^+\}} &\equiv_\Theta {m_1}_{|v^-} \oplus {m_0}_{|I \setminus \{v^-\}} & (v \in V) \\
    {m_1}_{|\{v^+,w^+ \} } \oplus {m_0}_{|I \setminus \{v^+,w^+ \}} &\equiv_\Theta {m_1}_{|\{v^-,w^- \} } \oplus {m_0}_{|I \setminus \{v^-,w^- \}} & (v \Edge w)
  \end{align*}

  \begin{claim}
    \begin{enumerate}
    \item \(\Theta \leq \mu^I \), and if \( \aI{x}_1 \equiv_\Theta \aI{x}_2 \) then \( x_1^\infty = x_2^\infty \).
    \item \( \Theta_{|U^I} \) has blocks of cardinality 1 and 2 only.
    \item If \( \aI{x}_1, \aI{x}_2 \in U^I \) and \( \aI{x}_1 \equiv_\Theta \aI{x}_2 \), then the set of coordinates where they differ is either empty, one \(V\)-block \( \{v^+,v^-\} \), or two \(V\)-blocks \( \{ v^+, v^-, w^+, w^- \} \) where \( v \Edge w \).
    \end{enumerate}	
  \end{claim}

  The first statement is clear. To see (b), first observe that if \( \aI{f} \in \Pol{1}{\alg{D}_{|U}} \setminus \PolGrp{\alg{D}}{U} \) then \[ \aI{f}({m_1}_{|\{v^+\}} \oplus {m_0}_{|I \setminus \{v^+\}}) = \aI{f}( {m_0}_{|I} ) = \aI{f}({m_1}_{|\{v^-\}} \oplus {m_0}_{|I \setminus \{v^-\}}) \]so it suffices to consider images of generating pairs under permutations \( \aI{f} \in \PolGrp{\alg{D}}{U} \). Next, since \( \PolGrp{\alg{S}}{U} / \TwinGrp{\alg{S}}{U} \) acts on orbits and since we may assume that \(\mathcal{O}(m_0) \neq \mathcal{O}(m_1)\), we may conclude that any image \( \aI{f}({m_1}_{|\{v^+\}} \oplus {m_0}_{|I \setminus \{v^+\}}) \) takes values in one orbit at all coordinates except \( v^+ \) and in a different orbit there, and similarly for the other elements involved in the generating pairs. We prove the claim for generators of the vertex type; the edge-type argument is no different.

  Given any putative \( \Theta_{|U} \)-block of more than two elements, we can find a subset of three elements of the form \begin{align*}
    \aI{x}_1 = \aI{f}_1 ({m_1}_{|\{v^+\}} \oplus {m_0}_{|I \setminus \{v^+\}}) &= \aI{f}_2 ({m_1}_{|\{v^+\}} \oplus {m_0}_{|I \setminus \{v^+\}}) = \aI{x}_2 \\
    \aI{y}_1 = \aI{f}_1 ({m_1}_{|\{v^-\}} \oplus {m_0}_{|I \setminus \{v^-\}}) &\stackrel{?}{=} \aI{f}_2 ({m_1}_{|\{v^-\}} \oplus {m_0}_{|I \setminus \{v^-\}}) = \aI{y}_2
  \end{align*}or vice versa. The first line shows that \( \aI{f}_2^{-1} \circ \aI{f}_1 (M^I) = M^I\); but since \( \aI{f}_2^{-1} \circ \aI{f}_1 \in \TwinGrp{\alg{D}}{U} \), it must fix \(M^I\) pointwise. Hence \( \aI{y}_1 = \aI{y}_2 \).

  Looking a little more closely at the argument, we see that in fact a pair of unequal elements \( \aI{x}_1, \aI{x}_2 \in U^I\) are \(\Theta\)-related iff they are the image of a generating pair under some \( \aI{f} \in \PolGrp{\alg{D}}{U} \). Claim (c) follows immediately.

  With this claim in hand, it is well-defined to speak of \( x^\infty \) for \( \aI{x} \in C \). Furthermore, by Claim \ref{claim:twins act the same}, the image of any member of a generating pair under \( \aI{f} \in \PolGrp{\alg{D}}{U} \) cannot be a constant element. (In other words, the constant elements of \( U^I \) are isolated modulo \( \Theta \).)

\addtocounter{claim}{1}
  Throughout the remainder of the proof, any \(\ell\)-tuple \(\aI{y}\) or \(\aI{z}\) will be assumed to be \(\sigma\)-congruent to \(\aI{b}_0\), and to satisfy the condition \[\aI{c} = t_0(\aI{a}_0,\aI{b}_0) \equiv_\Theta t_0(\aI{a}_0,\aI{y}) \tag{\arabic{section}.\arabic{thm}.\arabic{claim}}\label{eq:E(b0,y)} \](which is clearly first-order in \( \alg{C} \)). Since \(\aI{c}\) is isolated, this is in fact an equality. (For instance, every \(\ell\)-tuple from \( \{b_0,b_1\}^I \) satisfies this condition, and our life would be much easier if we could work with just that set. The following can be read as a way of coming as close to this as feasible.)

  \begin{claim}
    Define a parameter \( \mathfrak{b} = {b_1}_{|\infty} \oplus {b_0}_{|I \setminus \{ \infty \}} \) which will be fixed throughout the remainder of the proof. The predicates \[ \Eset{\alg{S}}{y^\infty}{b_0} = A \] and \[ \Eset{\alg{S}}{y^\infty}{b_1} = A \](in the free variable \( \aI{y}\)) are definable using \(\mathfrak{b}\) together with \( |A| \) other parameters.
  \end{claim}

  This is because \begin{align*}
    \Eset{\alg{S}}{y^\infty}{b_0} = A &\iff \bigwedge_{a \in A} a_{|\infty} \oplus {a_0}_{|I \setminus \{ \infty \}} \in \Eset{\alg{C}}{\aI{b}_0}{\aI{y}} \\
    \Eset{\alg{S}}{y^\infty}{b_1} = A &\iff \bigwedge_{a \in A} a_{|\infty} \oplus {a_0}_{|I \setminus \{ \infty \}} \in \Eset{\alg{C}}{\aI{b}_1}{\aI{y}}
  \end{align*}

  We will not name or even make explicit mention of the parameters \( a_{|\infty} \oplus {a_0}_{|I \setminus \{ \infty \}} \) any more, but they are implicitly present in all that follows. 
	
  The next claim does most of the heavy lifting in this lemma.
	
  \begin{claim}\label{claim:rectangular Psets}
    Suppose \(\aI{y}\) satisfies condition (\ref{eq:E(b0,y)}) and that \( \Eset{\alg{S}}{y^\infty}{b_1} = A \). Then the set \[ P(\aI{y}) := \left( \bigoplus_{i \neq \infty} \Eset{\alg{S}}{b_0}{y^i} \oplus \left( A \setminus \Eset{\alg{S}}{b_0}{b_1} \right) \right) / \Theta \]is a definable subset of \(\alg{C}\).
  \end{claim}

  To show this, we will need one auxiliary definition which will be repeatedly useful:
	
  \begin{defn*}
    If \( \Eset{\alg{S}}{y_1^\infty}{b_0} = A = \Eset{\alg{S}}{y_2^\infty}{b_1} \), write \( \aI{y}_1 \propto \aI{y}_2 \) if the following equivalent conditions are satisfied: \begin{enumerate}
    \item \(\Eset{\alg{S}}{y_1^i}{y^i_2} = A \) for all \( i \neq \infty \) \label{eq:propto_a}
    \item \( \Eset{\alg{C}}{\aI{b}_0}{\mathfrak{b}} \subseteq \Eset{\alg{C}}{\aI{y}_1}{\aI{y}_2} \) \label{eq:propto_b}
    \end{enumerate}
  \end{defn*}

  To see that these conditions are in fact equivalent, in the direction \( \eqref{eq:propto_a} \Rightarrow \eqref{eq:propto_b} \), if \( t_0(\aI{x},\aI{b}_0) \equiv_\Theta t_0(\aI{x},\mathfrak{b}) \), then \[ t_0(x^\infty,y_1^\infty) = t_0(x^\infty,b_0) = t_0(x^\infty,b_1) = t_0(x^\infty,y_2^\infty) \]so that \( t_0(\aI{x},\aI{y}_1) \) is in fact equal to \( t_0(\aI{x},\aI{y}_2) \). Conversely, fix \( i \neq \infty \) and \(a \in A \). We know that \begin{align*}
    \aI{c} = t_0(\aI{a}_0,\aI{y}_1) &= t_0(\aI{a}_0,\aI{y}_2) \\
    &\text{and} \\ t_0({a}_{|i} \oplus {a_0}_{|I \setminus \{i\}},\aI{b}_0) &= t_0({a}_{|i} \oplus {a_0}_{|I \setminus \{i\}},\mathfrak{b}) \\ &\text{hence} \\
    t_0({a}_{|i} \oplus {a_0}_{|I \setminus \{i\}},\aI{y}_1) &\equiv_\Theta t_0({a}_{|i} \oplus {a_0}_{|I \setminus \{i\}},\aI{y}_2)
  \end{align*}and these elements do not differ except possibly at \(i\); hence they are in fact equal, showing that \[ t_0(a,y_1^i) = t_0(a,y_2^i) \]Note that condition \eqref{eq:propto_b} is clearly first-order.

  Now to the proof of Claim \ref{claim:rectangular Psets}: let \( \aI{y} \) be as in the statement, and let \(\aI{z}\) be the tuple which agrees with \(b_0\) at \(\infty\) and with \(\aI{y}\) everywhere else, so \( \aI{z} \propto \aI{y} \). 
	
  Now assume further that \( \aI{x} \in P(\aI{y}) \). Then \begin{align*}
    t_0(\aI{x},\aI{b}_0) &= t_0(\aI{x},\aI{z}) \text{ and}\\
    t_0(\aI{x},\mathfrak{b}) &= t_0(\aI{x},\aI{y}) \text{ and}\\
    t_0(\aI{x},\aI{b}_0) &\not \equiv_\Theta t_0(\aI{x},\mathfrak{b})
  \end{align*}
	
  We have shown \begin{align*}
    \aI{x} \in P(\aI{y}) \Rightarrow \exists \aI{z} \equiv_\sigma \aI{b}_0 \; &\Eset{\alg{S}}{z^\infty}{b_0} = A \text{ and } \aI{z} \propto \aI{y} \text{ and}\\
    &\aI{x} \in \Eset{\alg{C}}{\aI{b}_0}{\aI{z}} \cap \Eset{\alg{C}}{\mathfrak{b}}{\aI{y}} \text{ and}\\
    &\aI{x} \notin \Eset{\alg{C}}{\aI{b}_0}{\mathfrak{b}}
  \end{align*}Next, we show that the converse holds as well.

  Assume the following: \begin{align}\stepcounter{claim}
    \aI{x} &\in A^I \text{ but not in } P(\aI{y}) \tag{\EQnum}\label{eq:z.1}\\ \stepcounter{claim}
    \aI{z} &\equiv_\sigma \aI{b}_0 \text{ with } \Eset{\alg{S}}{z^\infty}{b_0} = A \tag{\EQnum} \label{eq:z.2}\\ \stepcounter{claim}
    \aI{z} &\propto \aI{y} \tag{\EQnum}\label{eq:z.3} \\ \stepcounter{claim}
    \aI{x} &\in \Eset{\alg{C}}{\aI{b}_0}{\aI{z}} \cap \Eset{\alg{C}}{\mathfrak{b}}{\aI{y}} \tag{\EQnum} \label{eq:z.4}		
  \end{align}We must show that \( \aI{x} \in \Eset{\alg{C}}{\aI{b}_0}{\mathfrak{b}} \).

  By (\ref{eq:z.1}), we know that for some \(i \neq \infty \), \(x^i \notin \Eset{\alg{S}}{b_0}{y^i} \). By (\ref{eq:z.3}), \( \Eset{\alg{S}}{y^i}{z^i} = A \) for all \(i \neq \infty \).
	
  Working in \(\alg{D}\), define elements \begin{equation*} \begin{matrix}
      \aI{u}_{00} = t_0(\aI{x},\aI{b}_0) & t_0(\aI{x},\aI{z}) = \aI{u}_{01} \\
      \aI{u}_{10} = t_0(\aI{x},\mathfrak{b}) & t_0(\aI{x},\aI{y}) = \aI{u}_{11} 		
  \end{matrix} \end{equation*}Our assumptions imply the following: \begin{align} \stepcounter{claim}
    u_{10}^\infty &= u_{11}^\infty &\text{ since } \Eset{\alg{S}}{y^\infty}{b_1} = A \tag{\EQnum} \label{eq:u.1}\\ \stepcounter{claim}
    u_{00}^\infty &= u_{01}^\infty &\text{ by } (\ref{eq:z.2}) \tag{\EQnum} \label{eq:u.2}\\ \stepcounter{claim}
    i \neq \infty \Rightarrow u_{00}^i &= u_{10}^i &\text{ (obvious)} \tag{\EQnum} \label{eq:u.3}\\ \stepcounter{claim}
    i \neq \infty \Rightarrow u_{01}^i &= u_{11}^i &\text{ by } (\ref{eq:z.3}) \tag{\EQnum} \label{eq:u.4}\\ \stepcounter{claim}
    \aI{u}_{00} &\equiv_\Theta \aI{u}_{01} &\text{ by } (\ref{eq:z.4}) \tag{\EQnum} \label{eq:u.5}\\  \stepcounter{claim}
    \aI{u}_{10} &\equiv_\Theta \aI{u}_{11} &\text{ by } (\ref{eq:z.4}) \tag{\EQnum} \label{eq:u.6}\\ \stepcounter{claim}
    \aI{u}_{10} &\neq \aI{u}_{11} &\text{ by } (\ref{eq:z.1}) \tag{\EQnum} \label{eq:u.7}
  \end{align}Together, these imply that \( \aI{u}_{00} \neq \aI{u}_{01} \) also.

  Choose \( \aI{f} \in \PolGrp{\alg{D}}{U} \) so that \( \{ \aI{f}(\aI{u}_{10}),\aI{f}(\aI{u}_{11}) \} \) is a generating pair for \(\Theta\), and let \(\aI{w}_{ij} = \aI{f}(\aI{u}_{ij}) \). Then (\ref{eq:u.1})-(\ref{eq:u.7}) are still true of the \(\aI{w}_{ij}\). By definition, \( \aI{w}_{10}, \aI{w}_{11} \in M^I \); the same is true of \( \aI{w}_{00}, \aI{w}_{01} \), which is shown as follows: for \( i \neq \infty \), \( w_{0j}^i = w_{1j}^i \in M \), while at \(\infty\) we can use \( \TC{\sigma}{\sigma}{\mu} \) to get \begin{align*}
    f^\infty t_0 (a_0,b_0) &= f^\infty t_0(a_0,b_1) \\ &\Downarrow \\
    w_{01}^\infty = w_{00}^\infty = f^\infty t_0 (x^\infty,b_0) &\equiv_\mu f^\infty t_0 (x^\infty,b_1) = w_{10}^\infty \in M
  \end{align*}

  Similarly, we may choose \( \aI{g} \in \PolGrp{\alg{D}}{U} \) so that \( \{ \aI{g}(\aI{w}_{00}),\aI{g}(\aI{w}_{01}) \} \) is a generating pair for \( \Theta \), whose nontriviality is guaranteed by (\ref{eq:u.7}). But we have \( g^i(M) = M\) for all \( i \in I \), so we may assume (by Claim \ref{claim:twins act the same}) that \( g^i = g^j = g \) for all \(i,j \in I \).

  Now: since \( \{ \aI{w}_{10}, \aI{w}_{11} \} \) form a generating pair for \(\Theta\) and since \( |V| \geq 3 \), there exists \(v \in V \) so that \(w_{10}^{v^+} = w_{11}^{v^+} \). This value cannot be \(m_1\), so we have \[ w_{00}^{v^+} = w_{10}^{v^+} = m_0 = w_{11}^{v^+} = w_{01}^{v^+} \]Hence \[ g(w_{00}^{v^+}) = g(m_0) = g(w_{01}^{v^+}) \]which implies \(g(m_0) = m_0 \) (since \( \{ \aI{g}(\aI{w}_{00}), \aI{g}(\aI{w}_{01}) \} \) are a generating pair). But then \begin{align*}
    \left( \aI{g}(\aI{w}_{00}) \right)^\infty &= m_0 = \left( \aI{g}(\aI{w}_{01}) \right)^\infty \\ &\Downarrow \\
    w_{00}^\infty &= m_0 = w_{01}^\infty = w_{10}^\infty = w_{11}^\infty \\ &\Downarrow \\
    \aI{w}_{00} &= \aI{w}_{10} \\ &\Downarrow \\ 
    \aI{u}_{00} &= \aI{u}_{10} \\ &\Downarrow \\ 
    \aI{x} &\in \Eset{\alg{C}}{\aI{b}_0}{\mathfrak{b}}
  \end{align*}This completes the proof of Claim \ref{claim:rectangular Psets}.

  The foregoing claim implies that the mapping \[\aI{y} \mapsto \Eset{\alg{S}}{b_0}{y^i} \]on the set of those points \( \aI{y} \equiv_\sigma \aI{b}_0 \) such that \[ \Eset{\alg{S}}{y^\infty}{b_1} = A \]is invariant modulo \(\Theta\). Let \( \aI{y} \) be such a point. For any \( a \in A \), \( \aI{a} \in \Eset{\alg{C}}{\mathfrak{b}}{\aI{y}} \) iff \( a \) belongs to all the factor sets \( \Eset{\alg{S}}{b_0}{y^i} \) (\(i \neq \infty \)). It follows that the set \( \mathcal{B} \) of those \( \aI{y} \) such that \[ \Eset{\alg{S}}{b_0}{b_1} \subseteq \Eset{\alg{S}}{b_0}{y^i} \text{ for all } i \neq \infty \text{ and } \Eset{\alg{S}}{y^\infty}{b_1} = A \]that is, those \( \aI{y} \) such that \[\Eset{\alg{S}}{b_0}{y^i} \in \left\{\Eset{\alg{S}}{b_0}{b_1}, A \right\} \text{ for all } i \neq \infty \]is definable (by asserting that \( \aI{a} \in \Eset{\alg{C}}{\mathfrak{b}}{\aI{y}} \) for each \(a \in \Eset{\alg{S}}{b_0}{b_1} \)). We may define a preorder on \( \mathcal{B} \) by \[ \aI{y}_1 \ll \aI{y}_2 \iff P(\aI{y}_2) \subseteq P(\aI{y}_1) \](Note the reverse inclusion.) Because we chose \( \Eset{\alg{S}}{b_0}{b_1}\) maximal, the associated partial order is isomorphic to the boolean algebra with \( 2|V| \) atoms. Indeed, each tuple \( {b_1}_{|i,\infty} \oplus {b_0}_{|I \setminus \{i,\infty\}} \) sits at \(\ll\)-level 1; we denote the elements at \(\ll\)-levels one and two by \( \mathcal{B}_1 \) and \( \mathcal{B}_2 \) respectively. Let \( \mathrm{WHO}(v_0) \) be a formula (in the parameters we have already mentioned) asserting that \(v_0 \in \mathcal{B}_1 \).

  For \( \aI{y} \in \mathcal{B}_1 \), let \( \chi(\aI{y}) \) denote the (unique) coordinate \(i \neq \infty\) such that \( \Eset{\alg{S}}{b_0}{y^i} = \Eset{\alg{S}}{b_0}{b_1} \subsetneq A \). If \( \chi(\aI{y}) \in \{v^+,v^-\} \) we set \( |\chi|(\aI{y}) = v \).

  Assume that \( |\chi|(\aI{y}_1) = |\chi|(\aI{y}_2) \). Then either \( \chi(\aI{y}_1) = \chi(\aI{y}_2) \), which we know to be definable, or for some \( v \in V \) we have \( \chi(\aI{y}_1) = v^+ \) and \( \chi(\aI{y}_2) = v^- \) (or vice versa). Define \[ \aI{b}^+ = {b_1}_{|v^+, \infty} \oplus {b_0}_{|\mathrm{else}} \qquad \aI{b}^- = {b_1}_{|v^-, \infty} \oplus {b_0}_{|\mathrm{else}} \]Then \( \aI{b}^+, \aI{b}^- \in \mathcal{B}_1\), \( \chi(\aI{y}_1) = \chi(\aI{b}^+) \), and \( \chi(\aI{y}_2) = \chi(\aI{b}^-) \). Next define \[ \aI{z}^+ = {b_1}_{|v^+} \oplus {b_0}_{|\mathrm{else}} \qquad \aI{z}^- = {b_1}_{|v^-} \oplus {b_0}_{|\mathrm{else}} \]Then \( \aI{z}^+ \propto \aI{b}^+ \), \( \aI{z}^- \propto \aI{b}^- \), and \[ t_0 (\aI{a}_1, \aI{z}^+) = {m_1}_{|v^+} \oplus {m_0}_{|\mathrm{else}} \equiv_\Theta {m_1}_{|v^-} \oplus {m_0}_{|\mathrm{else}} = t_0 (\aI{a}_1, \aI{z}^-)\]We have shown that for \( \aI{y}_1, \aI{y}_2 \in \mathcal{B}_1 \), \begin{align*}
    |\chi|(\aI{y}_1) = |\chi|(\aI{y}_2) \Rightarrow \alg{C} \models \: &\chi(\aI{y}_1) = \chi(\aI{y}_2) \text{ or } \\
    &\exists v_3, v_4, v_5, v_6, \mathrm{WHO}(v_3) \: \& \: \mathrm{WHO}(v_4) \: \&   \\
    & \quad \chi(\aI{y}_1) = \chi(v_3) \: \& \: \chi(\aI{y}_2) = \chi(v_4) \: \& \\
    & \quad v_5 \propto v_3 \: \& \: v_6 \propto v_4 \: \& \\
    & \quad t_0(a_1,v_5) = t_0(a_1,v_6)
  \end{align*}Let the last formula be denoted \(\mathrm{EQ}(\aI{y}_1,\aI{y}_2)\), with the understanding that the variables \(v_3\) through \(v_6\) are really \(\ell\)-tuples.

  \begin{claim}
    The converse holds too; that is, the formula \( \mathrm{EQ}(v_1,v_2) \) defines the equivalence relation \( |\chi|(v_1) = |\chi|(v_2) \) on \(\mathcal{B}_1\).
  \end{claim}

  To show this, let \( \chi(\aI{y}_1) = v^+ \), say, and \( \chi(\aI{y}_2) \notin \{ v^+, v^- \} \); we must show \( \neg \mathrm{EQ}(\aI{y}_1, \aI{y}_2) \). To this end, let \( \aI{y}_3, \aI{y}_4 \in \mathcal{B}_1 \) with \( \chi(\aI{y}_3) = \chi(\aI{y}_1) \), \( \chi(\aI{y}_4) = \chi(\aI{y}_2) \), and let \( \aI{z}_5 \propto \aI{y}_3 \), \( \aI{z}_6 \propto \aI{y}_4 \). Then for \( i \neq \infty \), \( t_0(a_1, z_5^i) = t_0(a_1, y_3^i) \) and \( t_0(a_1, z_6^i) = t_0(a_1, y_4^i) \) by the definition of the relation \( \propto \). Hence \begin{align*}
    \text{If } i = \infty \text{ then } &t_0(a_1, z_5^i) = t_0(a_1, b_0) = t_0(a_1, z_6^i) \\
    \text{If } i = v^+ \text{ then } &t_0(a_1, z_5^i) = t_0(a_1, y_3^i) \neq t_0(a_1, b_0) = t_0(a_1, y_4^i) = t_0(a_1, z_6^i) \\
    \text{If } i = \chi(\aI{y}_2) \text{ then } &t_0(a_1, z_5^i) = t_0(a_1, y_3^i) = t_0(a_1, b_0) \neq t_0(a_1, y_4^i) = t_0(a_1, z_6^i) \\
    \text{Otherwise } &t_0(a_1, z_5^i) = t_0(a_1, y_3^i) = t_0(a_1, b_0) = t_0(a_1, y_4^i) = t_0(a_1, z_6^i)
  \end{align*}We have that \( t_0(\aI{a}_1, \aI{z}_5) \) differs from \( t_0(\aI{a}_1, \aI{z}_6) \) in exactly two coordinates, which do not form a V-block; hence these two elements are not \(\Theta\)-congruent. This proves the claim.

  All that remains is to show that the edge relation is recoverable, so suppose \( v \Edge w \), \(|\chi|(\aI{y}_1) = v \) and \(|\chi|(\aI{y}_2) = w \). Let \( \chi(\aI{y}_v^+) = v^+, \chi(\aI{y}_v^-) = v^-, \chi(\aI{y}_w^+) = w^+, \chi(\aI{y}_w^-) = w^- \), and define \[ \aI{b}_{vw}^+ = {b_1}_{|v^+,w^+,\infty} \oplus {b_0}_{|\mathrm{else}} \qquad \aI{b}_{vw}^- = {b_1}_{|v^-,w^-,\infty} \oplus {b_0}_{|\mathrm{else}}\]We have \( \aI{b}_{vw}^+ , \aI{b}_{vw}^- \in \mathcal{B}_2\), \( \aI{y}_v^+, \aI{y}_w^+ \ll \aI{b}_{vw}^+ \), and \( \aI{y}_v^-, \aI{y}_w^- \ll \aI{b}_{vw}^- \). Next define \[ \aI{z}_{vw}^+ = {b_1}_{|v^+,w^+} \oplus {b_0}_{|\mathrm{else}} \qquad \aI{z}_{vw}^- = {b_1}_{|v^-,w^-} \oplus {b_0}_{|\mathrm{else}} \]Then \( \aI{z}_{vw}^+ \propto \aI{b}_{vw}^+ \), \( \aI{z}_{vw}^- \propto \aI{b}_{vw}^- \), and \[ t_0(\aI{a}_1, \aI{z}_{vw}^+) = {m_1}_{|v^+,w^+} \oplus {m_0}_{|\mathrm{else}} \equiv_\Theta {m_1}_{|v^-,w^-} \oplus {m_0}_{|\mathrm{else}} = t_0(\aI{a}_1, \aI{z}_{vw}^-) \]We have shown that for \( \aI{y}_1, \aI{y}_2 \in \mathcal{B}_1 \), \begin{align*}
    |\chi|(\aI{y}_1) \Edge |\chi|(\aI{y}_2) \; \Rightarrow \; &\exists v_3, \ldots, v_{10} \; \bigwedge_{3 \leq j \leq 6} v_j \in \mathcal{B}_1 \; \& \; \bigwedge_{7 \leq j \leq 8} v_j \in \mathcal{B}_2 \; \& \\
    &|\chi|(v_3) = |\chi|(v_4) = |\chi|(\aI{y}_1) \neq |\chi|(\aI{y}_2) = |\chi|(v_5) = |\chi|(v_6) \; \& \\
    &\chi(v_3) \neq \chi(v_4) \; \& \: \chi(v_5) \neq \chi(v_6) \; \& \\
    &v_3, v_5 \ll v_7 \; \& \; v_4,v_6 \ll v_8 \; \& \\
    &v_9 \propto v_7 \; \& \; v_{10} \propto v_8 \; \& \; t_0(a_1, v_9) = t_0(a_1, v_{10})
  \end{align*}Call this formula \( \mathrm{EDGE}(\aI{y}_1, \aI{y}_2) \) (again all variables \(v_3\) through \(v_{10} \) are secretly \(\ell\)-tuples).

  \begin{claim}
    The converse holds too; that is, the formula \( \mathrm{EDGE}(v_1,v_2) \) recovers the edge relation of \( \mathbb{G} \) on \( \mathcal{B}_1 / |\chi| \). 
  \end{claim}
	
  The proof is similar to the last claim's. Assume \( |\chi|(\aI{y}_1) \neq |\chi|(\aI{y}_2) \) and \( |\chi|(\aI{y}_1) \not \Edge |\chi|(\aI{y}_2) \). Let \( \aI{y}_3, \ldots, \aI{y}_8, \aI{z}_9, \aI{z}_{10} \) be as in the statement. Then since \( \aI{z}_9 \propto \aI{y}_7 \) and \( \aI{z}_{10} \propto \aI{y}_8 \), for all \( i \neq \infty \) we have \[ t_0(a_1, y_7^i) = t_0(a_1, z_9^i) \quad t_0(a_1, y_8^i) = t_0(a_1, z_{10}^i) \]By assumption, \( \Eset{\alg{S}}{b_0}{z_9^\infty} = A = \Eset{\alg{S}}{b_0}{z_{10}^\infty} \), so in particular \[ t_0(a_1, z_9^\infty ) = t_0(a_1, b_0) = t_0(a_1, z_{10}^\infty) \]Now for \( i \in V^\pm \) \begin{align*}
    \text{If } i \in \{ \chi(\aI{y}_3), \chi(\aI{y}_5) \} \\ \qquad \text{ then } & t_0(a_1, z_9^i) = t_0(a_1, y_7^i) \neq t_0(a_1, b_0) = t_0(a_1, y_8^i) = t_0(a_1, z_{10}^i) \\
    \text{If } i \in \{ \chi(\aI{y}_4), \chi(\aI{y}_6) \} \\ \qquad \text{ then } & t_0(a_1, z_9^i) = t_0(a_1, y_7^i) = t_0(a_1, b_0) \neq t_0(a_1, y_8^i) = t_0(a_1, z_{10}^i) \\
    \text{Otherwise } \\ & t_0(a_1, z_9^i) = t_0(a_1, y_7^i) = t_0(a_1, b_0) = t_0(a_1, y_8^i) = t_0(a_1, z_{10}^i) \\
  \end{align*}Hence \( t_0(\aI{a}_1, \aI{z}_9) \) differs from \( t_0(\aI{a}_1, \aI{z}_{10}) \) on a set of precisely four coordinates \( \{ v^+, v^-, w^+, w^- \} \) where \( v \not \Edge w \). It follows that \[ t_0(\aI{a}_1, \aI{z}_9) \not \equiv_\Theta t_0(\aI{a}_1, \aI{z}_{10}) \]which proves the Claim and the Lemma.
\end{proof}

\begin{lemma}\label{lemma:sigma abelian}
  The strongly solvable radical of every finite algebra lying in a finitely decidable variety is abelian. 
\end{lemma}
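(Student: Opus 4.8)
The plan is to establish the contrapositive in its sharpest form: \emph{if $\alg{A}$ is a finite algebra whose strongly solvable radical $\sigma = \SSRad{\alg{A}}$ is not abelian, then $\HSP{\alg{A}}$ is hereditarily finitely undecidable.} This suffices, for if $\alg{A}$ lies in a finitely decidable variety $\var{V}$ then $\var{V} \supseteq \HSP{\alg{A}}$ would be a finitely undecidable superclass, a contradiction. I argue by induction on $\card{A}$. Fix a witness $m_0 \neq m_1$ to the failure of $\TC{\sigma}{\sigma}{\bot}$ and choose $\theta \in \Con{\alg{A}}$ maximal for not collapsing $\langle m_0, m_1 \rangle$; since $\langle m_0, m_1 \rangle \in \sigma \setminus \theta$ and, by Theorem \ref{thm:SSsim is congruence}, type~$1$ survives joins and quotients, $\alg{A}/\theta$ is subdirectly irreducible with $\SSRad{\alg{A}/\theta} \geq (\sigma \lor \theta)/\theta > \bot$, and the witness package persists. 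Replacing $\alg{A}$ by $\alg{A}/\theta$ and invoking minimality, I may therefore assume $\alg{A} = \alg{S}$ is subdirectly irreducible; its monolith $\mu$ satisfies $\bot \prectype{1} \mu \leq \sigma$, and since a type-$1$ monolith forces $\typset{\alg{S}} \subseteq \{1,3\}$ (Fact \ref{fact:basics}) there are no affine covers, so $\sigma = \Rad{\alg{S}}$.

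Now I assemble the hypotheses of the three interpretation lemmas of this section. Because $[\mu,\sigma]$ carries only type-$1$ covers and the labelled congruence lattice above $\mu$ is unchanged on passing to $\alg{S}/\mu$, one has $\SSRad{\alg{S}/\mu} = \sigma/\mu$; as $\card{S/\mu} < \card{S}$, the induction hypothesis gives that $\sigma/\mu$ is abelian, i.e. $\TC{\sigma}{\sigma}{\mu}$. Thus $\sigma$ is abelian over $\mu$ but not over $\bot_S$, and Lemma \ref{lemma:sigma centralizes minimal sets} applies: were $\TC{\sigma}{\mu_{|U}}{\bot}$ to fail for some $(\bot,\mu)$-minimal $U$, Lemma \ref{lemma:sigma centralizes minimal sets} would give that $\HSP{\alg{S}}$ is hereditarily finitely undecidable, which is our goal; so we may assume $\TC{\sigma}{\mu_{|U}}{\bot}$ holds for every such $U$. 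The hypotheses of Lemma \ref{lemma:twins act trivially} now being in force, we may likewise assume that $\TwinGrp{\alg{S}}{U}$ acts trivially on every $(\bot,\mu)$-trace, since otherwise $\HSP{\alg{S}}$ is again hereditarily finitely undecidable.

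It remains only to promote $U$-local centralization to the global statements $\TC{\sigma}{\mu}{\bot}$ and $\TC{\mu}{\sigma}{\bot}$, for then Lemma \ref{lemma:C(sigma,sigma;bot)} — whose remaining hypotheses $\TC{\sigma}{\sigma}{\mu}$ and $\neg\TC{\sigma}{\sigma}{\bot}$ we already have — delivers the required hereditary finite undecidability and the induction is complete. For $\TC{\sigma}{\mu}{\bot}$: any failure of the one-variable form $t(a_1,\vec b_1)=t(a_1,\vec b_2)$, $t(a_2,\vec b_1)\neq t(a_2,\vec b_2)$ (with $a_1\equiv_\sigma a_2$) has, by $\TC{\sigma}{\sigma}{\mu}$, both outputs in a single $\mu$-class; pulling these into a trace and using the $G$-set description of the traces together with Maroti's Lemma (Lemma \ref{lemma:Maroti}) and the triviality of $\TwinGrp{\alg{S}}{U}$, one transports the failure onto a minimal set, contradicting $\TC{\sigma}{\mu_{|U}}{\bot}$; in other words $(\bot,\mu)$ is $\sigma$-coherent in the sense of Definition \ref{defn:coherent}. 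The companion relation $\TC{\mu}{\sigma}{\bot}$ is obtained dually, using that $\mu$ is strongly abelian and that, all covers above $\sigma = \Rad{\alg{S}}$ being boolean, no boolean neighbourhood centralizes $\mu$ (Lemma \ref{lemma:disjoint centralizer}); should the coherence argument stall, one instead passes to the strictly smaller quotient by the centralizer of $\mu$, which lies properly between $\mu$ and $\sigma$, and reapplies the induction hypothesis.

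The main obstacle is precisely this last promotion step — propagating centralization of (and by) $\mu$ from the minimal sets, where the earlier lemmas make it cheap, to all of $\alg{S}$. Each of the three interpretation constructions of this section was built to kill one of the alternative obstructions — non-commutation of $\sigma$ with itself modulo $\mu$, failure of $U$-local centralization, a nontrivial twin action — so that the unique surviving configuration is the tame one to which Lemma \ref{lemma:C(sigma,sigma;bot)} applies; the coherence analysis of type-$1$ minimal sets in finitely decidable varieties is what certifies that nothing else can survive.
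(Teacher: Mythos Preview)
Your overall architecture matches the paper's: reduce to a minimal subdirectly irreducible counterexample, use minimality to get $\TC{\sigma}{\sigma}{\mu}$, feed Lemmas \ref{lemma:sigma centralizes minimal sets} and \ref{lemma:twins act trivially} to secure $\TC{\sigma}{\mu_{|U}}{\bot}$ and triviality of the twin action on traces, and finish with Lemma \ref{lemma:C(sigma,sigma;bot)}. The gap is exactly where you flag it --- the promotion step --- but your sketch does not actually close it, and you have the two halves in the wrong order.

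For $\TC{\mu}{\sigma}{\bot}$ the paper gives a short direct argument that you are missing: take a failure with $a_1,a_2$ in a trace $N\subset U$ and $t$ mapping into $U$; the unary maps $f_j(v)=t(v,\vec b_j)$ cannot collapse traces (else no failure), so both lie in $\PolGrp{\alg{S}}{U}$ and are $\sigma$-twins; then $f_2^{-1}\circ f_1\in\TwinGrp{\alg{S}}{U}$ fixes $a_1$, hence carries $N$ to itself, hence by the trivial-twin-action hypothesis fixes $N$ pointwise --- contradicting $f_2^{-1}\circ f_1(a_2)\neq a_2$. Your appeal to ``duality'', to Lemma \ref{lemma:disjoint centralizer} (which concerns boolean neighbourhoods and the \emph{other} direction of centralization), and to a quotient by the centralizer of $\mu$ does not reproduce this; in particular the centralizer of $\mu$ is relevant to $\TC{\sigma}{\mu}{\bot}$, not to $\TC{\mu}{\sigma}{\bot}$, and passing to that quotient does not obviously preserve the failure of $\TC{\sigma}{\sigma}{\bot}$ that drives the induction.

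For $\TC{\sigma}{\mu}{\bot}$ you assert $\sigma$-coherence of $(\bot,\mu)$ by gesturing at Maroti's Lemma, but coherence is not free here: the $\mu$-shifted parameters $\vec b_1,\vec b_2$ need not lie in any minimal set, and Maroti's Lemma alone does not let you transport an arbitrary $\TC{\sigma}{\mu}{\bot}$ failure into $U$. The paper obtains coherence as a \emph{consequence} of the previous step, citing Kearnes's theorem that $[\mu,\sigma]=\bot$ forces $(\bot,\mu)$ to be $\sigma$-coherent; only then does $\TC{\sigma}{\mu_{|U}}{\bot}$ upgrade to $\TC{\sigma}{\mu}{\bot}$. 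So the correct logical flow is $\TC{\mu}{\sigma}{\bot}$ (via twins) $\Rightarrow$ coherence (via \cite{Kearnes93}) $\Rightarrow$ $\TC{\sigma}{\mu}{\bot}$, and both of your paragraphs on this step need to be replaced accordingly.
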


\begin{proof}
  Let \( \alg{S} \) be a counterexample of minimum possible cardinality, with strongly solvable radical \( \sigma \). We aim for a contradiction.

  \begin{claim}
    \( \alg{S} \) is subdirectly irreducible.
  \end{claim}
	
  To see this, let \begin{align*}
    t(a_1, \vec{b}_1) &= t(a_1, \vec{b}_2) \\ &\text{but} \\ t(a_2, \vec{b}_1) &\neq t(a_2, \vec{b}_2)
  \end{align*}witness \( \neg \TC{\sigma}{\sigma}{\bot_S} \), and let \( \alpha \) be maximal for separating \( t(a_2, \vec{b}_1) \) from \( t(a_2, \vec{b}_2) \). Then \( \alpha \) is meet-irreducible and \( \alpha \lor \sigma \) is strongly solvable over \( \alpha \). The same failure of the term condition shows that the strongly solvable radical of \( \alg{S} / \alpha \) is not abelian, which is incompatible with \( \bot_S < \alpha \). This proves the claim.

  Let \( \mu \) denote the monolith of \( \alg{S} \). Again by minimality, we also have that \( \TC{\sigma}{\sigma}{\mu} \). Of course, since \( \sigma \) is nontrivial, the monolith has unary type. By Lemma \ref{lemma:zeta solvable}, the centralizer of \(\mu\) is a strongly solvable congruence. We have that \(\alg{S}\) satisfies all the hypotheses of Lemma \ref{lemma:sigma centralizes minimal sets}, but by assumption, \( \HSP{\alg{S}} \) is not finitely undecidable; hence we must have that for all \( (\bot_S,\mu) \)-minimal sets \(U\), \( \TC{\sigma}{\mu_{|U}}{\bot} \).

  Now by Lemma \ref{lemma:twins act trivially}, we have that for any \( (\bot,\mu) \)-minimal set \(U\), the action of \( \TwinGrp{\alg{S}}{U} \) inside any trace \( N \subset U \) is trivial.

  \begin{claim}\label{claim:[mu,sigma] = bot}
    \( \TC{\mu}{\sigma}{\bot} \); equivalently, \( [\mu, \sigma] = \bot \).
  \end{claim}

  Suppose otherwise. Choose a witnessing package \begin{align*}
    t(a_1, \vec{b}_1) &= t(a_1, \vec{b}_2) \\ &\text{but} \\
    t(a_2, \vec{b}_1) &\neq t(a_2, \vec{b}_2)
  \end{align*}such that \(a_1, a_2\) belong to some trace \(N\) inside a \( (\bot, \mu) \)-minimal set \(U\) and the polynomial \( t(v_0, \ldots, v_k ) \) respects \(U\). Then it is not possible for either of the functions \[ f_i(v_0) = t(v_0, \vec{b}_i) \](\(i = 1,2\)) to collapse traces to points; hence these two funtions are twin elements of \( \PolGrp{\alg{S}}{U} \).
	
  But then the first line (equality) says that \( f_2^{-1} \circ f_1 (a_1) = a_1 \), implying that \( f_2^{-1} \circ f_1 (N) = N \); but the second line yields \( f_2^{-1} \circ f_1 (a_2) \neq a_2 \). This contradiction proves the claim.

  By Theorem 4.5 of \cite{Kearnes93}, Claim \ref{claim:[mu,sigma] = bot} implies that \(\mu\) is \(\sigma\)-coherent. We have already shown that the hypothesis of the coherence property, \( \bigwith_N \TC{\sigma}{\mu_{|N}}{\bot} \), holds; hence we have both \( \TC{\mu}{\sigma}{\bot} \) and \( \TC{\sigma}{\mu}{\bot} \).

  This shows that \( \alg{S} \) satisfies all the hypotheses of Lemma \ref{lemma:C(sigma,sigma;bot)}. Since our assumption was that \( \HSP{\alg{S}} \) is not finitely undecidable, we must have \( \TC{\sigma}{\sigma}{\bot} \). But this contradicts our choice of \( \alg{S} \) as a counterexample.
\end{proof}

\begin{lemma}\label{lemma:nonSA implies nonA}
  If \(\alg{F}\) is a finite algebra with a strongly solvable congruence which is abelian but not strongly abelian, then \(HS(\alg{F}^2)\) contains an algebra with a strongly solvable congruence which is not abelian.
\end{lemma}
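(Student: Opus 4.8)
The plan is to run a minimal‑counterexample argument and then build, inside $\HS{\alg{F}^2}$, a subalgebra of a square modulo a single carefully chosen principal congruence, whose effect is to turn a strongly solvable congruence non‑abelian. Let $\alg{F}$ be a counterexample of least cardinality, with strongly solvable abelian non‑strongly‑abelian congruence $\sigma$, and fix a witness $t(\vec a_1,\vec b_1)=t(\vec a_2,\vec b_2)$, $t(\vec a_1,\vec b_1)\neq t(\vec a_1,\vec b_3)$ to the failure of the strong term condition, all tuples $\sigma$-related. Using that $\stackrel{ss}{\sim}$ is a lattice congruence (Theorem~\ref{thm:SSsim is congruence}), that $\TC{R}{\beta}{\gamma}$ depends only on the congruence generated by $R\cup\gamma$, and that $\TC{\sigma}{\sigma}{\bot}\Rightarrow\TC{\sigma}{\sigma}{\mu}$, one checks that for any $\theta$ maximal for separating $t(\vec a_1,\vec b_1)$ from $t(\vec a_1,\vec b_3)$ the algebra $\alg{F}/\theta\in\HS{\alg{F}^2}$ again carries a strongly solvable abelian non‑strongly‑abelian congruence; hence $\alg{F}$ is subdirectly irreducible, and its monolith $\mu$ has type $1$ (since $\bot\stackrel{1}{\prec}\mu\le\sigma\le\SSRad{\alg{F}}$). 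If $\sigma$ were not strongly abelian over $\mu$ we could pass to the still‑smaller $\alg{F}/\mu$ (on which $\sigma/\mu$ is strongly solvable, abelian, non‑strongly‑abelian, and nontrivial, as $\sigma=\mu$ would make $\sigma$ strongly abelian over $\bot$), a contradiction; so we may assume $\sigma$ is strongly abelian over $\mu$. The easy‑to‑use form of the strong term condition then forces the outputs of the witness into one $\mu$-class; composing $t$ with an idempotent polynomial we may take its range inside a $(\bot,\mu)$-minimal set, and by the remark after \eqref{eq:TCsimplification} we may take $\vec a_1,\vec a_2$ to be single elements $a_1\equiv_\sigma a_2$. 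Write $e:=t(a_1,\vec b_1)=t(a_2,\vec b_2)$, $f:=t(a_1,\vec b_2)$, $g:=t(a_2,\vec b_1)$; abelianness gives $e\neq f$ and $e\neq g$, and $e,f,g$ are pairwise $\mu$-congruent.

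Now for the construction. Inside $\alg{F}^2$ let $\alg{B}$ be the subalgebra generated by the diagonal together with the tuple of pairs $(b^{k}_1,b^{k}_2)$ taken from the coordinates of $\vec b_1,\vec b_2$ (write this generator informally as $(b_1,b_2)$). Every element of $\alg{B}$ has $\sigma$-related coordinates, so $\alg{B}$ is a diagonal subpower of $\alg{F}$ with $\alg{B}/\eta_i\cong\alg{F}$. Letting $T$ be the term of $\alg{B}$ that applies $t$ coordinatewise, the elements $(e,f)=T((a_1,a_1),(b_1,b_2))$ and $(g,e)=T((a_2,a_2),(b_1,b_2))$ lie in $\alg{B}$, and since $e\neq g$ they are distinct. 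Put $\Theta:=\Cg{\alg{B}}{(e,f)}{(g,e)}$. Because $e,f,g$ are pairwise $\mu$-congruent, $\Theta\le\widehat\mu:=(\mu\times\mu)\cap B^2\le\widehat\sigma:=(\sigma\times\sigma)\cap B^2$. Since $\sigma\le\SSRad{\alg{F}}$ we have $\sigma\times\sigma\le\SSRad{\alg{F}^2}$, i.e.\ $\sigma\times\sigma$ is strongly solvable on $\alg{F}^2$; strong solvability of a congruence passes to subalgebras and to quotients (immediate from the refinement definition, strongly abelian congruences being so inherited, together with Theorem~\ref{thm:SSsim is congruence}), so $\widehat\sigma$ is strongly solvable on $\alg{B}$ and its image $\tau:=\widehat\sigma/\Theta$ is strongly solvable on $\alg{C}:=\alg{B}/\Theta\in\HS{\alg{F}^2}$.

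Next I would verify that $\tau$ is not abelian. Take $\vec A_1:=(a_1,a_1)\equiv_{\widehat\sigma}(a_2,a_2)=:\vec A_2$ and $\vec B_1:=(b_1,b_1)\equiv_{\widehat\sigma}(b_1,b_2)=:\vec B_2$, all pairs being $\tau$-related in $\alg{C}$. In $\alg{B}$ one computes $T(\vec A_1,\vec B_2)=(e,f)$ and $T(\vec A_2,\vec B_2)=(g,e)$, which are $\Theta$-identified, whereas $T(\vec A_1,\vec B_1)=(e,e)$ and $T(\vec A_2,\vec B_1)=(g,g)$. Thus, with the $\vec B$-tuples in the role of the first two variables of the term condition and the $\vec A$-tuples in the role of the rest, $T$ exhibits a failure of $\TC{\tau}{\tau}{\bot}$ in $\alg{C}$ — so $\alg{C}$ is the algebra we want — provided only that $(e,e)\not\equiv_\Theta(g,g)$.

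The entire proof therefore reduces to this sparseness claim, and it is the step I expect to be the main obstacle: strong solvability alone cannot suffice, since $\sigma$ is merely abelian and so a single generating pair could a priori propagate far. The argument should take place inside the single $\widehat\mu$-block $W=(e/\mu)^2\cap B$ that contains every element in play: each unary polynomial of $\alg{B}$ acts on pairs as $(u,v)\mapsto(p(u,\vec\pi_0),p(v,\vec\pi_1))$ with $\vec\pi_0\equiv_\sigma\vec\pi_1$, and because the domain is a product of $\mu$-classes and the range may be arranged inside a minimal set, Maroti's Lemma (Lemma~\ref{lemma:Maroti}) sharply limits which such polynomials can carry $(e,f)$ to $(g,e)$; tracking the two coordinates separately along a hypothetical $\Theta$-chain from $(e,e)$ to $(g,g)$ and using $\TC{\sigma}{\sigma}{\bot}$ to transport equalities between $\sigma$-congruent parameter tuples should produce a contradiction. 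This is the same kind of sparse‑congruence bookkeeping as in Lemmas~\ref{lemma:twins act trivially} and~\ref{lemma:C(sigma,sigma;bot)}, and I would expect it to need a further maximality normalization of the witness — choosing $t,\vec b_1,\vec b_2$ so that $\Cg{\alg{F}}{e}{f}$ is as small as possible — to make it close.
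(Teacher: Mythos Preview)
Your architecture is essentially the paper's: reduce so that $\sigma$ is strongly abelian over the (unique) atom $\mu$ below it, take a witness $e=t(a_1,\vec b_1)=t(a_2,\vec b_2)$, $f=t(a_1,\vec b_2)$, $g=t(a_2,\vec b_1)$, build a diagonal subalgebra of $\alg{F}^2$, collapse the pair $\{(e,f),(g,e)\}$ (up to coordinate swap) by a principal congruence, and exhibit the term‑condition failure contingent on $(e,e)\not\equiv(g,g)$. The one substantive difference is which off‑diagonal element you adjoin: you take $(\vec b_1,\vec b_2)$, the paper takes the single pair $\binom{a_1}{a_2}$. That choice matters for the step you flagged as the obstacle.

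The isolation claim is the whole lemma, and your sketch for it is the genuine gap. The paper does \emph{not} use Maroti's Lemma or any twin‑group bookkeeping here; the argument is a few lines. First, abelianness of $\sigma$ gives, for every unary polynomial $p$, the equivalence $p(e)=p(g)\iff p(f)=p(e)$ (shift $\vec b_1\leftrightarrow\vec b_2$ in $p\circ t$). Second, one case‑splits on whether the witness can be chosen with $a_1\equiv_\mu a_2$. If not, then for all $m_1\equiv_\mu m_2$ and $\vec u_1\equiv_\sigma\vec u_2$ one has $s(m_1,\vec u_1)=s(m_2,\vec u_2)\Rightarrow s(m_1,\vec u_1)=s(m_1,\vec u_2)=s(m_2,\vec u_1)$; if so, then $\mu$ is strongly abelian and the subalgebra sits inside $\mu$. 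In either case, if $\binom{e}{e}=\binom{q(e,\vec e_1)}{q(g,\vec e_2)}$ with $\vec e_1\equiv_\sigma\vec e_2$ (which is automatic since the subalgebra $\langle\Delta,\binom{a_1}{a_2}\rangle$ lies inside $\sigma$), the case hypothesis forces $q(e,\vec e_2)=e$, and then the displayed equivalence forces $q(f,\vec e_1)=e$; so $\binom{e}{e}$ is isolated. No minimal‑set analysis, no chain‑chasing.

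Your dual choice $(\vec b_1,\vec b_2)$ makes this harder: a unary polynomial of your $\alg{B}$ has the form $(u,v)\mapsto(P(u,\vec b_1),P(v,\vec b_2))$, and from $(e,e)=(P(e,\vec b_1),P(f,\vec b_2))$ abelianness only yields $P(g,\vec b_1)=P(e,\vec b_2)$, i.e.\ that the partner is diagonal, not that it equals $(e,e)$. The single generator $\binom{a_1}{a_2}$ is exactly what makes the parameters $\vec e_1,\vec e_2$ a $\sigma$‑twin pair and lets the case split bite. Switching your construction to the paper's generator, and replacing the Maroti route by the two‑line abelianness identity plus the $a_1\equiv_\mu a_2$ case split, closes the gap immediately.
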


\begin{proof}
  Let \(\sigma \in \Con{\alg{F}}\) be strongly solvable and abelian, but not strongly abelian. If \(\sigma\) is not abelian over some congruence beneath it, we are done; so without loss of generality \(\sigma\) is strongly abelian over every nontrivial congruence \(\bot < \alpha \leq \sigma\). (Else pass from \(\alg{F}\) to its quotient by a congruence maximal for \(\sigma\) not being strongly abelian over it.) We have that \[\sigma \times \sigma = \eta_1^{-1}(\sigma) \land \eta_2^{-1}(\sigma) \]is a strongly solvable congruence of \(\alg{F}^2\).
	
  Let \begin{align*}c_1 = t(a_1,\vec{b}_1) &\neq t(a_1,\vec{b}_2) = c_3 \\ c_2 = t(a_2,\vec{b}_1) &\neq t(a_2, \vec{b}_2) = c_1	
  \end{align*}witness the failure of strong abelian-ness of \(\sigma\) over \(\bot_F\). Since \( \sigma \) is strongly abelian over every nontrivial \( \alpha \leq \sigma \), it follows that \( c_1 \equiv_\alpha c_2 \equiv_\alpha c_3 \) for all such \( \alpha \); in particular, there is only one congruence atom \( \mu = \Cg{}{c_1}{c_2} = \Cg{}{c_1}{c_3} \) below \( \sigma \).

  \stepcounter{claim}
  Since \( \TC{\sigma}{\sigma}{\bot} \), for any polynomial \( p(x) \in \Pol{1}{\alg{F}} \) we have \begin{align*}
      p(c_1) = p(t(a_1, \vec{b}_1)) &= p(t(a_2, \vec{b}_1)) = p(c_2) \\ &\Updownarrow \tag{\theclaim}\label{eq:polys acting on ci} \\
      p(c_3) = p(t(a_1, \vec{b}_2)) &= p(t(a_2, \vec{b}_2)) = p(c_1)
    \end{align*}

  \stepcounter{claim}
  Our proof will proceed somewhat differently depending on whether \(a_1, a_2 \) could be chosen \( \mu \)-equivalent. If this is not possible, then for all polynomials \(s\) and all \( m_1 \equiv_\mu m_2 \) and \(\vec{u}_1 \equiv_\sigma \vec{u}_2\), \[\tag{\theclaim} \label{eq:a1 not mu to a2} s(m_1, \vec{u}_1) = s(m_2, \vec{u}_2) \; \Rightarrow \; s(m_1, \vec{u}_1) = s(m_1, \vec{u}_2) = s(m_2, \vec{u}_1) \]

  In both cases, let \( \alg{C} \leq \alg{F}^2 \) be the subalgebra generated by the diagonal together with \( \binom{a_1}{a_2} \). Then as subalgebras, \( \alg{C} \leq \sigma \), and if \( a_1 \equiv_\mu a_2 \) then \( \alg{C} \leq \mu \). Let \( \beta \in \Con{\alg{C}} \) be generated by identifying \( \binom{c_1}{c_2} \equiv_\beta \binom{c_3}{c_1} \). We will show that \( \sigma \times \sigma \) is not abelian over \( \beta \).

  \begin{claim}
    \( \binom{c_1}{c_1} \) is isolated mod \( \beta \); that is, there do not exist \(f \in \Pol{1}{\alg{F}}\) and \( \binom{e_{1i}}{e_{2i}} \in \alg{C} \) such that\[ \binom{c_1}{c_1} = \binom{f(c_1, \vec{e}_1)}{f(c_2, \vec{e}_2)} \neq \binom{f(c_3, \vec{e}_1)}{f(c_1, \vec{e}_2)} = \binom{d_1}{d_2} \]
  \end{claim}

  Suppose first that \( a_1 \) could not be chosen \( \mu \)-congruent to \( a_2 \). By equation \eqref{eq:a1 not mu to a2}, \( c_1 = f(c_2, \vec{e}_1) = f(c_1, \vec{e}_2) = d_2 \); it follows by equation \eqref{eq:polys acting on ci} that \( c_1 = f(c_1, \vec{e_1}) = f(c_3, \vec{e}_1) = d_1 \). This contradiction proves the first case of the claim.

  In the other case, assume that \( a_1 \equiv_\mu a_2 \), so that \( \alg{C} \) is a subalgebra of \(\mu\), which is a strongly abelian congruence. The equality \( f(c_1, \vec{e}_1) = f(c_2, \vec{e}_2) \) implies that \[ c_1 = f(c_2, \vec{e}_2) = f(c_2, \vec{e}_1) = f(c_1, \vec{e}_2) = d_2 \]Equation \eqref{eq:polys acting on ci} implies that \( f(c_3,\vec{e}_1) = c_1\) too.

  With the previous claim in place, the following failure of the term condition \begin{align*}
    \binom{c_1}{c_2} = t\left( \binom{a_1}{a_2}, \binom{\vec{b_1}}{\vec{b_1}} \right) &\equiv_\beta t\left( \binom{a_1}{a_2}, \binom{\vec{b_2}}{\vec{b_2}} \right) = \binom{c_3}{c_1} \\
    \binom{c_2}{c_2} = t\left( \binom{a_2}{a_2}, \binom{\vec{b_1}}{\vec{b_1}} \right) &\not \equiv_\beta t\left( \binom{a_2}{a_2}, \binom{\vec{b_2}}{\vec{b_2}} \right) = \binom{c_1}{c_1}\end{align*}shows that \(\sigma \times \sigma \) is not abelian over \(\beta\).
\end{proof}

We are ready to finish this section's main result:

\begin{proof}[Proof of Theorem \ref{MainThm:sigma strongly abelian}]
  By Lemma \ref{lemma:nonSA implies nonA}, if \( \alg{A} \) is any finite algebra whose strongly solvable radical is not strongly abelian, then \( \HSP{\alg{A}} \) contains a finite algebra whose strongly solvable radical is nonabelian. By Lemma \ref{lemma:sigma abelian}, such an algebra cannot lie in any finitely decidable variety.
\end{proof}

\section[Bounding SIs in \( \var{V} \)]{The finite residual bound on a finitely decidable variety}\label{sec:ResidualBound}
\setcounter{thm}{0}

We now are ready for the proof of Theorem \ref{MainThm:residual bound}. For the remainder of this section, fix a finitely generated, finitely decidable variety \(\var{V} \), say \( \var{V} = \HSP{\var{K}} \), where \( \var{K} \) is a finite set of finite algebras.

\begin{lemma}\label{lemma:type 3 bound}
  \( \var{V} \) contains only finitely many subdirectly irreducible finite algebras whose monolith is of boolean type.
\end{lemma}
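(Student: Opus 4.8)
By Fact~\ref{fact:basics}(iii), every finite subdirectly irreducible $\alg{S}\in\var{V}$ with boolean-type monolith $\mu$ has $\typset{\alg{S}}=\{3\}$; hence $\Rad{\alg{S}}=\SSRad{\alg{S}}=\bot_S$, every congruence cover of $\alg{S}$ is boolean, and by Fact~\ref{fact:basics}(iv) every minimal set of $\alg{S}$ is a two-element set realizing all four of its self-maps as polynomials. Because $\var{V}$ lives in a finite language there are only finitely many algebras of each finite size in that language, so the lemma is equivalent to a uniform bound $|S|\le N=N(\var{V})$ for all such $\alg{S}$. The plan is to obtain this bound by contradiction: assuming a sequence $\alg{S}_1,\alg{S}_2,\dots$ of pairwise non-isomorphic boolean-monolith SIs in $\var{V}$ with $|S_n|\to\infty$, I would show that some $\alg{S}_n$ admits a semantic interpretation of an undecidable class into its finite diagonal subpowers, so that $\HSP{\alg{S}_n}\subseteq\var{V}$ — and therefore $\var{V}$ itself — is finitely undecidable. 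It is worth noting at the outset that the expansion $\langle\alg{S};S\rangle$ naming every element has the same clone, congruences, minimal sets and types as $\alg{S}$; since all these minimal sets have empty tails and Mal'cev polynomials, $\langle\alg{S};S\rangle$ generates a congruence-modular (in fact arithmetical) variety, which is exactly why this boolean case is the one closest to the previously known, modular results.

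The construction would run parallel to those of Lemmas~\ref{lemma:disjoint centralizer}, \ref{lemma:twins act trivially}, \ref{lemma:C(sigma,sigma;bot)}, and especially Corollary~\ref{cor:sigma meet-irreducible}. Fix a two-element $(\bot,\mu)$-minimal set $U=\{0,1\}$ with $\mu=\Cg{}{0}{1}$; since every self-map of $U$ is a polynomial, $U^I\cap D$ carries a copy of the Boolean algebra $\alg{2}^I$ inside any diagonal subpower $\alg{D}\le\alg{S}^I$, this subset and its atoms are definable (Proposition~\ref{prop:power of minimal set}), and the product structure of $\alg{D}$ is first-order recoverable exactly as in the earlier arguments. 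Into such a $\alg{D}(\mathbb{G})$ — or into a quotient by a sparse congruence beneath $\mu^I$ — one would encode a finite graph $\mathbb{G}$ (or a pair of disjoint equivalence relations) by generators $\chi^\mu_i,\chi_e$ mimicking vertices and edges, and recover $\mathbb{G}$ from definable predicates. The only ingredient not already present in the earlier proofs is a combinatorial resource extracted from ``$|S_n|\to\infty$'': either a large antichain in $\Con{\alg{S}_n}$, from which (after passing to a suitable quotient, as in Corollary~\ref{cor:sigma meet-irreducible}) one interprets $\EQtwo$ almost verbatim; or, failing that, Idziak's structure theory for finitely decidable congruence-modular varieties (\cite{Idziak97}), which forces an algebra with thin congruence lattice and boolean monolith to be a Boolean-power-like object over a boundedly small base, directly contradicting $|S_n|\to\infty$.

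I expect the main obstacle to be precisely this dichotomy — converting unboundedness of $|S|$ into enough genuinely independent boolean-type data to code an arbitrary finite graph, given that a priori $\Con{\alg{S}}$ could be a long chain while $\alg{S}$ itself is enormous. The cleanest route is probably to cite \cite{Idziak97} to dispose of the ``thin but huge'' case (where Idziak's characterization already bounds such subdirectly irreducibles) and to handle the ``wide'' case by adapting the $\EQtwo$-interpretation of Corollary~\ref{cor:sigma meet-irreducible}; the remaining verifications — definability of the relevant subsets, sparseness of the quotient congruence, and correctness of the recovered structure — are of exactly the same character as those carried out at length in Sections~\ref{sec:Comparable} and \ref{sec:Abelian}.
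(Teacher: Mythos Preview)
Your proposal is not a proof but a research plan, and it heads in a direction quite unlike the paper's. You aim for a contradiction via an interpretation of graphs or $\EQtwo$ into large boolean-monolith SIs, and you acknowledge that the heart of the matter --- extracting enough ``independent boolean data'' from mere unboundedness of $|S|$, or else invoking \cite{Idziak97} to kill the linear case --- is exactly the step you do not carry out. As written, the dichotomy is asserted, not proved: you give no argument that a long chain in $\Con{\alg{S}}$ forces a bound on $|S|$, nor that a wide $\Con{\alg{S}}$ really yields the two incomparable boolean atoms your $\EQtwo$ template needs. So the proposal has a genuine gap at its central step.

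The paper's argument is entirely different and far shorter: it never touches interpretations or undecidability. It shows directly that every finite SI $\alg{S}\in\HSP{\var{K}}$ with boolean monolith already lies in $\HS{\var{K}}$, whence there are only finitely many. Write $\alg{S}\cong\alg{B}/\pi$ with $\alg{B}\leq\prod_{i=1}^p\alg{A}_i$, $\alg{A}_i\in\var{K}$, and $p$ minimal. Pick a $(\pi,\mu)$-minimal set $U=\{\aI{x},\aI{y}\}$ (two elements, empty tail) and set $\beta=\Cg{}{\aI{x}}{\aI{y}}$. A short Mal'cev-chain argument using the idempotent retraction onto $U$ shows $\Con{\alg{B}}=I[\bot,\pi]\sqcup I[\beta,\top]$: any $\theta\not\leq\pi$ must identify $\aI{x},\aI{y}$. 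Minimality of $p$ forces each $\hat\eta_i=\bigwedge_{j\neq i}\eta_j$ to lie above $\beta$; but if $p>1$ then $\langle\aI{x},\aI{y}\rangle\in\hat\eta_1\cap\hat\eta_2=\bot$, a contradiction. Hence $p=1$ and $\alg{S}\in\HS{\var{K}}$. No size bound is computed and none is needed: the conclusion is qualitative and immediate.
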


\begin{proof}
  We will show that in fact every finite subdirectly irreducible \[\alg{S} \in \HSP{\var{K}}\] with boolean-type monolith already belongs to \(\HS{\var{K}}\).
	
  So let \( \alg{S} \) be a quotient of \[\alg{B} \leq \prod_{i = 1}^p A_i\]where each \(\alg{A}_i \in \var{K}\) and \(p\) is the smallest number of factors for which such a representation exists; say \(\alg{S} \cong \alg{B}/\pi\), where \(\pi\) is meet-irreducible, with upper cover \(\mu\) such that \( \typ{\pi} {\mu} = 3\). The minimality of \(p\) implies that each \(\hat{\eta}_{i} = \bigwedge_{j \neq i} \eta_j\) has no congruence \(\theta\) above it such that \(\alg{B}/\theta \cong \alg{S}\); in particular, for each \(i\), \(\hat{\eta}_{i} \lor \pi \geq \mu\).

  Choose some \((\pi,\mu)\)-minimal set \(U = e(B)\). Then \(U\) has empty tail and only one trace, so \(U = \{\aI{x},\aI{y}\}\). Let \(\beta = \Cg{}{\aI{x}}{\aI{y}}\), and observe that \(\mu = \pi \lor \beta\).	

  \begin{claim}
    \(\Con{\alg{B}} = I[\bot,\pi] \sqcup I[\beta,\top]\).
  \end{claim}
	
  The disjointness is obvious. Let \(\theta \not \leq \pi\). Then \(\theta \lor \pi \geq \mu\), and in particular identifies \(\aI{x}\) and \(\aI{y}\). String a chain of elements between them: \[\aI{x} \equiv_\theta \aI{z}_1 \equiv_\pi \aI{z}_2 \equiv_\theta \cdots \equiv_\pi \aI{z}_n \equiv_\theta \aI{y}\]and hit this chain with \(e\): \[\aI{x} = e(\aI{x}) \equiv_\theta e(\aI{z}_1) \equiv_\pi e(\aI{z}_2) \equiv_\theta \cdots \equiv_\pi e(\aI{z}_n) \equiv_\theta e(\aI{y}) = \aI{y}\]The resulting chain is in \(U\), so the \(\pi\)-links are trivial, implying that \(\aI{x} \equiv_\theta \aI{y}\), as claimed.
	
  We have already seen that \(\hat{\eta}_i \not \leq \pi\) for any \(1 \leq i \leq p\); by the claim, each \(\hat{\eta}_i\) identifies \(\aI{x} \) and  \( \aI{y}\). But now observe that if \(p\) were to be greater than \(1\), we would have \[\langle \aI{x},\aI{y} \rangle \in \hat{\eta}_1 \cap \hat{\eta}_2 = \bot\]which would be absurd. Hence \(p = 1\) and the theorem follows. 
\end{proof}

\begin{lemma}\label{lemma:type 2 bound}
  \( \var{V} \) contains only finitely many subdirectly irreducible finite algebras whose monolith is of affine type.
\end{lemma}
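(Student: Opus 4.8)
The plan is to produce a bound on \( \card{S} \), depending only on \( \var{K} \), valid for every finite subdirectly irreducible \( \alg{S} \in \var{V} \) whose monolith \( \mu \) has affine type; since a finite language admits only finitely many algebras of each finite size up to isomorphism, this proves the lemma. First I would collect the structural input. By Fact~\ref{fact:basics}, \( \typset{\alg{S}} \subseteq \{2,3\} \), so \( \alg{S} \) omits the unary type and (by the remark following that Fact) modular commutator theory applies to \( \alg{S} \). By Idziak's Theorem~\ref{thm:type2_abelian_centralizer}, the centralizer \( \zeta \) of \( \mu \) is an abelian congruence, coincides with \( \Rad{\alg{S}} \), contains \( \mu \), and is comparable to every congruence of \( \alg{S} \); thus \( \Con{\alg{S}} = I[\bot,\zeta] \cup I[\zeta,\top] \), where the lower interval is abelian with unique atom \( \mu \) and the upper interval consists only of boolean-type covers (a cover above the solvable radical must have type \(3\), \(4\), or \(5\), and types \(4,5\) are excluded in \( \var{V} \)). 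Since an abelian congruence of a congruence-modular algebra has all its blocks isomorphic, as sets, to one fixed finite module \( \alg{M} \), we have \( \card{S} = \card{S/\zeta} \cdot \card{M} \), so it suffices to bound \( \card{S/\zeta} \) and \( \card{M} \) separately.

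Bounding the ``top'' quotient \( \alg{S}/\zeta \) should go through Lemma~\ref{lemma:type 3 bound}. Its congruence lattice \( I[\zeta,\top] \) has only boolean covers, so \( \alg{S}/\zeta \) is a subdirect product of subdirectly irreducible algebras with boolean-type monolith, each of which, by Lemma~\ref{lemma:type 3 bound}, belongs to \( \HS{\var{K}} \) and has size at most a constant \( c_3 = c_3(\var{K}) \). To upgrade this to a bound on \( \alg{S}/\zeta \) itself I would re-run the minimality-of-\(p\) count from the proof of Lemma~\ref{lemma:type 3 bound}: in a subdirect representation of \( \alg{S}/\zeta \) over \( \var{K} \) with the least number of factors, the two-element, empty-tailed minimal sets of the boolean covers again force the number of factors to be bounded in terms of \( \var{K} \). (Alternatively, observe that \( \alg{S}/\zeta \) lies in the congruence-distributive subvariety of \( \var{V} \) cut out by the condition \( \typset{\alg{A}} \subseteq \{3\} \), which is finitely generated and hence has a finite residual bound, and then bound the number of subdirect factors.) Either way, \( \card{S/\zeta} \le c(\var{K}) \).

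The main obstacle is bounding \( \card{M} \) — equivalently, bounding the subdirectly irreducible modules that can occur as \( \zeta \)-blocks of such an \( \alg{S} \). The ring \( R \) acting on \( \alg{M} \) is a homomorphic image of the ring of unary term operations of \( \alg{S} \) restricted to a \( (\bot,\mu) \)-trace, so \( \card{R} \) is bounded in terms of \( \var{K} \); but a finite ring can carry subdirectly irreducible modules of arbitrarily large size, so this is precisely the point at which the finite-decidability hypothesis must be used. I would proceed as in Section~\ref{sec:Abelian}: assuming that arbitrarily large subdirectly irreducible modules occur inside algebras of \( \var{V} \), build a diagonal subpower of \( \alg{S} \) in which polynomials coming from the non-abelian quotient \( \alg{S}/\zeta \) act on many mutually independent copies of one large module, and use them to semantically interpret the class of graphs (or \( \EQtwo \)) into \( \var{V}_{\mathrm{fin}} \), contradicting finite decidability. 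As a shortcut one may instead invoke Idziak's characterization of finitely decidable congruence-modular varieties \cite{Idziak97}, applied to the congruence-modular subvariety of \( \var{V} \) that governs the type-\(2\) and type-\(3\) behaviour of \( \alg{S} \), to obtain the module bound directly. Granting \( \card{M} \le c'(\var{K}) \), we conclude \( \card{S} = \card{S/\zeta} \cdot \card{M} \le c(\var{K})\, c'(\var{K}) \), which is the desired bound.
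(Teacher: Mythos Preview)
Your two-stage plan (bound $\card{\alg{S}/\zeta}$, then bound the $\zeta$-blocks) matches the paper's, but the execution has gaps on both sides, and one of them is a genuine mathematical error.

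For the top quotient: the remark after Fact~\ref{fact:basics} concerns \emph{varieties} omitting type~1, not single algebras, so it does not by itself give you Day terms for $\alg{S}$. The paper instead works in a minimal subdirect representation $\alg{S}=\alg{B}/\pi$, $\alg{B}\leq_s\prod_i\alg{A}_i$, shows $\sigma_1\times\cdots\times\sigma_p\leq\pi$, deduces that each $\alg{A}_i$ has trivial strongly solvable radical, and concludes via Lemma~\ref{lemma:subpowers omit type 1} that $\alg{B}$ has Day \emph{polynomials}. Then Theorem~10.1 of \cite{Freese_McKenzie} applies and yields $\alg{S}/\zeta\in\HS{\var{K}}$ outright---no need to re-run Lemma~\ref{lemma:type 3 bound} or to invoke a distributive subvariety. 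Your two proposed routes for this step are both incomplete: Lemma~\ref{lemma:type 3 bound} bounds SI algebras, and you have not shown $\alg{S}/\zeta$ is SI; the distributive-subvariety idea only bounds the SI quotients of $\alg{S}/\zeta$, not $\alg{S}/\zeta$ itself.

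The more serious problem is your module step. The assertion ``a finite ring can carry subdirectly irreducible modules of arbitrarily large size'' is simply false: by \cite{Kearnes91}, Theorem~1, every subdirectly irreducible module over a finite ring $R$ has cardinality at most $\card{R}$. So once $\card{R}$ is bounded, the module is bounded too, and no further appeal to finite decidability is needed. This is exactly the paper's route: it realizes the $\zeta$-class $Q\subseteq U$ of $0$ as a subdirectly irreducible module over the ring $R=\{f\in\Pol{1}{\alg{S}_{|Q}}:f(0)=0\}$, uses abelianness of $\zeta$ to show every $f\in R$ is determined by a term in $1+\ell$ variables (one per $\zeta$-class plus one free), so $\card{R}\le\card{\alg{F}_{\var{V}}(1+\ell)}$, applies Kearnes to get $\card{Q}\le\card{R}$, and finally injects an arbitrary $\zeta$-class into $Q^{\card{\alg{F}_{\var{V}}(1+\ell)}}$. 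Your proposed substitutes---an unspecified interpretation construction, or invoking \cite{Idziak97} wholesale---are unnecessary, and the second is uncomfortably close to assuming the result.
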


The proof adapts from, but corrects an error in, \cite{Snow_McKenzie} Section 12. 

\begin{proof}
  Let \( \alg{S} \in \HSP{\var{K}} \) be subdirectly irreducible with affine monolith; say \( \alg{S} = \alg{B}/\pi\), where \[ \alg{B} \leq_s \prod_{i=1}^p \alg{A}_i \qquad (\alg{A}_i \in \var{K})\]Without loss of generality \( \var{K} = \HS{\var{K}} \), and the representation is minimal in the sense that \( \alg{S} \) is not representable in this way by fewer than \(p\) factors from \( \var{K}\), and moreover if \( \beta_i \in \Con{\alg{A}_i} \) and \(\alg{S} \) is a quotient of a subalgebra of \( \prod_i \alg{A}_i/\beta_i \) then all \( \beta_i \) are trivial.

  \begin{claim}
    Let \( \sigma_i \) denote the strongly solvable radical of \(\alg{A}_i\), and \( \sigma_1 \times \cdots \times \sigma_p = \sigma \in \Con{\alg{B}}\). Then \(\sigma \leq \pi\).
  \end{claim}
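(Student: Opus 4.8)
The plan is to show that $\sigma$ is strongly solvable over $\bot_B$ in $\alg{B}$ — equivalently $\bot_B \stackrel{ss}{\sim} \sigma$ — and then to exploit the fact that a subdirectly irreducible algebra with monolith of affine type omits the unary tame congruence type, so that its strongly solvable radical is trivial; pushing $\sigma$ down through $\pi$ then collapses it.

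First I would check that $\eta_i \stackrel{ss}{\sim} \eta_i^{-1}(\sigma_i)$ holds in $\alg{B}$ for each $i$, where $\eta_i$ is the kernel of the $i$th projection. Since $\sigma_i = \SSRad{\alg{A}_i}$, the interval $[\bot,\sigma_i]$ of $\Con{\alg{A}_i}$ contains only covers of type $1$; and because the tame congruence type of a cover lying above $\eta_i$ is the same whether computed in $\alg{B}$ or in the quotient $\alg{B}/\eta_i \cong \alg{A}_i$ (here using that $\alg{B}$ is subdirect), the same is true of the interval $[\eta_i, \eta_i^{-1}(\sigma_i)]$ in $\Con{\alg{B}}$. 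Next, $\stackrel{ss}{\sim}$ is a lattice congruence of $\Con{\alg{B}}$ by Theorem \ref{thm:SSsim is congruence}, hence commutes with finite meets; so, using the identities $\bigwedge_i \eta_i = \bot_B$ and $\bigwedge_i \eta_i^{-1}(\sigma_i) = \sigma$,
\[ \sigma \;=\; \bigwedge_{i=1}^{p} \eta_i^{-1}(\sigma_i) \;\stackrel{ss}{\sim}\; \bigwedge_{i=1}^{p} \eta_i \;=\; \bot_B . \]
In particular $\sigma \leq \SSRad{\alg{B}}$, so the interval $[\bot_B,\sigma]$ has only type-$1$ covers.

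Then I would invoke Theorem \ref{thm:SSsim is congruence}(2) with the congruence $\pi$ in the role of $\gamma$: the interval $[\pi, \pi \vee \sigma]$ of $\Con{\alg{B}}$ likewise has only type-$1$ covers. Passing to $\alg{S} = \alg{B}/\pi$ (again types are preserved under quotients), the congruence $(\pi \vee \sigma)/\pi$ of $\alg{S}$ is joined to $\bot_S$ through type-$1$ covers, hence lies below $\SSRad{\alg{S}}$. But $\alg{S}$ is subdirectly irreducible with monolith of affine type, so Fact \ref{fact:basics} gives $\typset{\alg{S}} \subseteq \{2,3\}$; in particular $\alg{S}$ has no type-$1$ covers anywhere in its congruence lattice, whence $\SSRad{\alg{S}} = \bot_S$. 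Therefore $(\pi \vee \sigma)/\pi = \bot_S$, i.e. $\pi \vee \sigma = \pi$, which is exactly $\sigma \leq \pi$.

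I do not expect a serious obstacle here: the content is carried entirely by the two parts of Theorem \ref{thm:SSsim is congruence} (that $\stackrel{ss}{\sim}$ is a lattice congruence, and that an interval consisting of type-$1$ covers stays that way after meeting or joining with an arbitrary congruence) together with the structural fact that an affine-monolith subdirectly irreducible algebra omits type $1$. The only point needing any care is the transfer of tame congruence types between $\alg{B}$ and its quotients $\alg{B}/\eta_i$ and $\alg{B}/\pi$, which is standard. Note that the minimality of $p$ in the representation $\alg{B} \leq_s \prod_i \alg{A}_i$ plays no role in this particular claim.
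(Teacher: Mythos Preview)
Your proof is correct and follows essentially the same approach as the paper's: both arguments hinge on the fact that the interval $[\bot_B,\sigma]$ consists entirely of type-$1$ covers, that $\alg{S}$ omits type $1$ (Fact~\ref{fact:basics}), and that Theorem~\ref{thm:SSsim is congruence} forbids a strongly solvable interval from surviving the join with $\pi$. The paper compresses this into a single projectivity argument---exhibiting a type-$1$ cover $\alpha^- \prec \alpha^+$ below $\sigma$ that projects onto a type-$2$ or $3$ cover above $\pi$---whereas you unfold the same content via the lattice-congruence property of $\stackrel{ss}{\sim}$; your version is more explicit (in particular, you actually verify $\bot_B \stackrel{ss}{\sim} \sigma$ via the meet of the $\eta_i^{-1}(\sigma_i)$, which the paper takes for granted), but the substance is the same.
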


  Suppose this were false. Let \[ \bot_B \leq \alpha^- \prectype{1} \alpha^+ \leq \sigma \]such that \( \alpha^- \leq \pi \) but \( \pi \leq \beta^- \prectype{2,3} \beta^+ = \alpha^+ \lor \pi \). Then the covers \( \alpha^- \prectype{1} \alpha^+ \) and \( \beta^- \prectype{2,3} \beta^+ \) are projective, which is absurd (cf. Theorem \ref{thm:SSsim is congruence}).

  Our minimality assumption implies now that each \( \alg{A}_i\) in the representation of \( \alg{B} \) has trivial strongly solvable radical. By Lemma \ref{lemma:subpowers omit type 1}, \( \alg{B} \) has Day polynomials; hence the term condition on congruences of \( \alg{B} \) is symmetric in the first two variables. 

It follows by Theorem 10.1 of \cite{Freese_McKenzie} that \( \alg{S}/\zeta \in \HS{\var{K}}\), where \( \zeta \) denotes the centralizer of the monolith \(\mu\); in particular, \[ \card{\alg{S}/\zeta} \leq \max \{ \card{A} \colon \alg{A} \in \var{K} \} \]
  
  We will be done if we can show that there is also a bound on the number of elements of each \(\zeta\)-block. From now on we will forget about \(\alg{B}\) and work only in \(\alg{S}\). Let \(\{C_i = r_i/\zeta : 1 \leq i \leq \ell \}\) be an injective enumeration (with fixed representatives) of the \(\zeta\)-classes, \(C\) any fixed one of them, and \(U\) a \((\bot_S,\mu)\)-minimal set containing a monolith pair \(\{0,a\}\).

  As before, we have a Malcev polynomial \(m(v_1,v_2,v_3)\) on \(U\); furthermore, if \(Q \subseteq U\) denotes the \(\zeta\)-class of \(0\) in \(U\), then \(m\) respects \(Q\). Since the tail of \(U\) is empty, \(\alg{S}_{|U}\) is then an abelian Malcev algebra. By a standard argument, the operation \( m(x,y,z) = x - y + z \) defines an abelian group operation on \(Q\) under which \(0\) is the identity element.

  \begin{claim}
    The set of polynomial functions \[R = \{f(v) \in \Pol{1}{\alg{S}_{|Q}} : f(0) = 0 \} \]is a ring of endomorphisms of \(\alg{Q}\) (under pointwise addition and function composition), and the size of \(R\) is bounded independent of \(\alg{S}\).
  \end{claim}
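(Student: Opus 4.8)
The plan is to pass to the module description of the affine minimal set $U$ and then bound the ring it produces using only the fixed finite generating set $\var{K}$.

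\emph{The module picture and the ring structure.} Since $U$ has empty tail and $\bot_S \prec \mu$ has affine type, $\alg{S}_{|U}$ is an abelian Mal'cev algebra, hence polynomially equivalent to a module over a finite ring; concretely, writing $x+y = m(x,0,y)$ and $-x = m(0,x,0)$, this ring is $R_U = \{\,\rho \in \Pol{1}{\alg{S}_{|U}} : \rho(0)=0\,\}$ acting by endomorphisms on the abelian group $\langle U; +, -, 0\rangle$. Because $e$ fixes $U$, hence $Q$, pointwise, every unary polynomial of $\alg{S}_{|Q}$ is the restriction to $Q$ of some unary polynomial of $\alg{S}_{|U}$ that carries $Q$ into $Q$; and since $\zeta_{|U}$ is a congruence of $\alg{S}_{|U}$, the set $Q = (0/\zeta)\cap U$ is exactly the submodule it determines. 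Consequently $\alg{S}_{|Q}$ is again abelian Mal'cev with Mal'cev term $m$ and polynomially equivalent to a module; its unary polynomials are precisely the maps $x \mapsto \phi(x)+c$ with $c \in Q$ and $\phi$ a polynomially realized endomorphism of $\langle Q; +, -, 0\rangle$ fixing $0$. Hence $R$ is exactly the set of such $\phi$; it is closed under composition and under the pointwise sum $\phi_1+\phi_2 : x \mapsto m(\phi_1(x),0,\phi_2(x))$, again a polynomial, and the ring axioms are inherited from the module structure. This is the first assertion, and it displays $R$ as the image of $R_U$ under $\rho \mapsto \rho_{|Q}$, so $|R| \le |R_U|$.

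\emph{Bounding $|R_U|$.} It remains to bound $|R_U|$ in terms of $\var{K}$ alone. Recall $\alg{S} = \alg{B}/\pi$ with $\alg{B} \leq_s \prod_{i=1}^p \alg{A}_i$, each $\alg{A}_i \in \var{K} = \HS{\var{K}}$, and that by minimality and Lemma \ref{lemma:subpowers omit type 1} every $\alg{A}_i$ has trivial strongly solvable radical, so $\alg{B}$ has Day polynomials and the commutator theory of modular varieties is available in $\alg{B}$. Under the congruence correspondence the monolith cover $\bot_S \prec \mu$ lifts to an affine-type cover $\pi \prec \mu^{*}$ of $\alg{B}$ with the same associated ring $R_U$. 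In the modular lattice $\Con{\alg{B}}$ choose a projection kernel $\eta_j$ with $\mu^{*} \not\leq \pi \vee \eta_j$; by modularity $\pi \vee \eta_j \prec \mu^{*} \vee \eta_j$, this cover is perspective to $\pi \prec \mu^{*}$, hence also of affine type and with the same ring $R_U$, and factoring out $\eta_j$ displays $R_U$ as the ring of an affine-type prime quotient of $\alg{A}_j \in \var{K}$. Since $\var{K}$ is a fixed finite set of finite algebras, only finitely many finite rings arise this way, so $|R| \le |R_U|$ is bounded by the maximum of their cardinalities, a quantity depending only on $\var{K}$.

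\emph{Main obstacle.} The delicate step — and the place where the argument of \cite{Snow_McKenzie} needed repair — is the transfer in the second part: one must verify that the ring attached to the affine cover is genuinely invariant, first under the congruence correspondence $\alg{B} \to \alg{B}/\pi$ and then under the perspectivity between $[\pi,\mu^{*}]$ and $[\pi \vee \eta_j, \mu^{*} \vee \eta_j]$, so that it really is detected inside a member of $\var{K}$. This invariance is exactly what congruence modularity of $\alg{B}$ (via its Day polynomials) supplies, since perspective affine prime quotients carry isomorphic rings in the modular setting; once it is in place the counting is immediate. The remaining points — the module equivalence for $\alg{S}_{|U}$, and that $e$ fixes $Q$ pointwise so that $\alg{S}_{|Q}$ is a corestriction of $\alg{S}_{|U}$ to a submodule — are routine.
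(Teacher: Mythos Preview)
Your approach to bounding $|R|$ is genuinely different from the paper's, and as written it has a gap.

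The paper never goes back to $\alg{B}$ at this point. Instead it uses that $\zeta$ is abelian (Theorem~\ref{thm:type2_abelian_centralizer}) together with the term condition directly: given $f(x)=t(x,\vec{s})\in R$, one replaces the arbitrary parameter tuple $\vec{s}$ by the fixed tuple $\vec{r}$ of $\zeta$-class representatives, since from the trivial identity $t(0,\vec{s})-t(0,\vec{s})=t(0,\vec{r})-t(0,\vec{r})$ and $\TC{\zeta}{\zeta}{\bot}$ one deduces $f(x)=t(x,\vec{r})-t(0,\vec{r})$. Thus every element of $R$ is determined by a term in $1+\ell$ variables, giving $|R|\le|\alg{F}_{\var V}(1+\ell)|$, with $\ell=|S/\zeta|$ already bounded. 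This is short and avoids any appeal to perspectivity of prime quotients or to ring-invariance results.

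Your route---pushing the affine cover $\pi\prec\mu^{*}$ in $\alg{B}$ up to some $[\pi\vee\eta_j,\mu^{*}\vee\eta_j]$ and reading off the ring inside $\alg{A}_j$---is a reasonable idea, but the sentence ``choose a projection kernel $\eta_j$ with $\mu^{*}\not\le\pi\vee\eta_j$'' is not justified and can fail. Modularity alone does not prevent $\mu^{*}\le\pi\vee\eta_j$ for \emph{every} $j$: already in an $M_3$ configuration with $\pi=a$, $\mu^{*}=1$, $\eta_1=b$, $\eta_2=c$ one has $\pi\vee\eta_j=1=\mu^{*}$ for both $j$, so the one-step upward perspectivity collapses. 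Nothing you have assumed (minimality of $p$, meet-irreducibility of $\pi$, Day polynomials) rules this out at the lattice level. The step you flag as ``delicate''---invariance of the ring under perspectivity---is actually the easy part in the modular setting; the missing ingredient is the \emph{existence} of a non-collapsing transposition into some $[\eta_j,\top]$.

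This is repairable: pick the least $k$ with $\mu^{*}\wedge\eta_1\wedge\cdots\wedge\eta_k\le\pi$, set $\gamma=\eta_1\wedge\cdots\wedge\eta_{k-1}$, and use the two-step projectivity $[\pi,\mu^{*}]\searrow[\pi\wedge\gamma,\mu^{*}\wedge\gamma]\nearrow[(\pi\wedge\gamma)\vee\eta_k,(\mu^{*}\wedge\gamma)\vee\eta_k]$, which a modular-law computation shows is nontrivial and lands in $[\eta_k,\top]$. With that correction your argument goes through and yields a somewhat sharper bound (the ring of an affine prime quotient of some $\alg{A}_j$) than the paper's free-algebra bound; but as stated, the proof is incomplete.
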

	
  The only nontrivial piece of the first part is that each such \(f\) respects addition: \begin{align*}
    f(x) = f(x - y + y) + f(0) &= f(y - y + 0) + f(x) = f(x) \\ &\Downarrow \\
    f(x+y) = f(x - 0 + y) + f(0) &= f(y - 0 + 0) + f(x) = f(y) + f(x)
  \end{align*}The second comes from the fact that each \(f \in R\) is given by an \(\ell + 1\)-ary term operation in a uniform way: if \(f(x) = t(x,\vec{s})\) then \begin{align*}
    0 = t(0,\vec{s}) - t(0,\vec{s}) &= t(0,\vec{r}) - t(0,\vec{r}) \\ &\Downarrow \\
    t(x,\vec{s}) = t(x,\vec{s}) - t(0,\vec{s}) &= t(x,\vec{r}) - t(0,\vec{r})
  \end{align*}where \(\vec{r}\) denotes the chosen representatives of the \(\zeta\)-classes. Hence \(|R| \leq |\alg{F}_\var{V} (1 + \ell) |\).
  
  Now: for any \(s_1 \neq s_2 \in S\), there exists a polynomial \(f(v_0) = t(v_0,\vec{s})\) so that \(t(s_1,\vec{s}) = 0\) and \(t(s_2,\vec{s}) = a\). In particular, if \(s_1 = 0, s_2 \in Q\) then we may take \(f \in R\). 

  What this shows is that \( \alg{Q} \) is subdirectly irreducible as an \( R \)-module. By Theorem 1 of \cite{Kearnes91}, \( \card{Q} \leq \card{R} \).

  Now we are almost done: we have already noted that for each \(c,d \in C\) there exists a term \(t(v_0,\ldots,v_{|S|})\) with \(t(c,\vec{s}) = 0\), \(t(d,\vec{s}) = a\). One has 
  \begin{align*}
    et(d,\vec{s}) - et(d,\vec{s}) &= et(d,\vec{r}) - et(d,\vec{r}) \\ &\Downarrow \\
    a = et(d,\vec{s}) - et(c,\vec{s}) &= et(d,\vec{r}) - et(c,\vec{r})
  \end{align*}where all these values must lie in \(Q\). Hence the map 
  \begin{align*}
    C &\rightarrow \fourIdx{\alg{F}_\var{V}(1+\ell)}{}{}{}{Q} \\
    x &\mapsto \langle et(x,\vec{r}) \colon t \in \alg{F}_\var{V}(1+\ell) \rangle
  \end{align*}is injective.

  We have shown \[ \card{C} \leq \card{Q}^{\card{\alg{F}_{\var{V}}(1+\ell)}} \leq \card{R}^{\card{\alg{F}_{\var{V}}(1+\ell)}} \leq \card{\alg{F}_{\var{V}}(1+\ell)}^{\card{\alg{F}_{\var{V}}(1+\ell)}} \]which, combined with the fact that \[ \card{S} \leq \card{C} \cdot \max_{\alg{A} \in \mathcal{K}}(\card{A}) \]completes the proof.
\end{proof}

We will need the following technical lemma limiting the number of variables which can be independent (modulo a strongly abelian congruence) in a polynomial operation.

\begin{lemma}\label{lemma:few independent variables}
  Let \( \alg{A} \) be a finite algebra in a locally finite variety \( \var{V} \), and \( \beta \) a strongly abelian congruence on \( \alg{A} \), and \( t(v_0, \vec{v}_1, \ldots, \vec{v}_\ell) \) be any polynomial operation of \( \alg{A} \). Let \(M = \log\card{\alg{F}_{\var{V}}(\ell + 2)}\). Then there exist subsets \( \breve{v}_i \subset \vec{v}_i \) of size no more than \( M \), such that for any \( \beta \)-blocks \( B_1, \ldots, B_\ell \) the mapping \stepcounter{claim}\begin{align}
    A \times \vec{B}_1 \times \cdots \times \vec{B}_\ell &\rightarrow A \notag \\
    \langle a, \vec{b}_1, \ldots, \vec{b}_\ell \rangle &\mapsto t(a, \vec{b}_1, \ldots, \vec{b}_\ell) \label{align:fewvars}
  \end{align}depends only on the variables \(v_0 \) and \( \breve{v}_i \).
\end{lemma}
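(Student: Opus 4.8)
The plan is to prove the statement one block at a time. I will show that for each $i$ there is a set $\breve v_i \subset \vec v_i$ with $\card{\breve v_i} \le M$ such that for every $\beta$-block $B_i$ and every assignment of values to the remaining variables ($v_0$ arbitrary, and each $\vec v_j$, $j \ne i$, taken from a single $\beta$-block), altering the coordinates of $\vec v_i$ within $B_i$ while fixing those in $\breve v_i$ leaves the value of $t$ unchanged. Granting this for each $i$, the lemma follows by changing $\vec v_1$, then $\vec v_2$, \dots, then $\vec v_\ell$ in turn: the map on $A \times \vec B_1 \times \cdots \times \vec B_\ell$ then depends only on $v_0$ and on $\bigcup_i \breve v_i$, and $\card{\bigcup_i \breve v_i} \le \ell M$ is irrelevant — each $\breve v_i$ itself has size $\le M$, which is what the statement asks.

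Fix $i$ and write $\vec v_i = (u_1,\dots,u_k)$. The engine is a counting argument against an $(\ell+2)$-generated algebra. Let $J$ be the set of all tuples $w = (a; B^w_1,\dots,B^w_\ell; d^w_0,d^w_1; (c^w_j)_{j\ne i})$ with $a\in A$, each $B^w_j$ a $\beta$-block, $d^w_0,d^w_1\in B^w_i$, and $c^w_j\in B^w_j$; inside $\alg{A}^J$ form the subalgebra $\alg{U}$ generated by the $\ell+2$ elements $\mathbf x=(a)_w$, $\mathbf g_0=(d^w_0)_w$, $\mathbf g_1=(d^w_1)_w$, and $\mathbf h_j=(c^w_j)_w$ for $j\ne i$. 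As $\alg{U}\in\var{V}$ is $(\ell+2)$-generated, $\card{U}\le\card{\alg{F}_{\var{V}}(\ell+2)}=2^M$. For $T\subseteq\{1,\dots,k\}$ let $\rho(T)\in U$ be $t$ evaluated at $\mathbf x$ in $v_0$, the tuple over $\{\mathbf g_0,\mathbf g_1\}$ carrying $\mathbf g_1$ exactly in the coordinates of $T$, and the constant tuple $(\mathbf h_j,\dots,\mathbf h_j)$ in each $\vec v_j$; thus the $w$-coordinate of $\rho(T)$ is $t(a,\vec d^w_T,(c^w_2,\dots),\dots)$ where $\vec d^w_T$ has $d^w_1$ on $T$ and $d^w_0$ off $T$. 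Define $\breve v_i=\{\,u_p : \rho(T)\ne\rho(T\cup\{p\})\text{ for some }T\not\ni p\,\}$.

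Two things remain. First, $\card{\breve v_i}\le M$: if $u_{p_0},\dots,u_{p_m}$ are distinct elements of $\breve v_i$, I claim the $2^{m+1}$ elements $\rho(T)$, $T\subseteq\{p_0,\dots,p_m\}$, are pairwise distinct, so $2^{m+1}\le\card{U}\le 2^M$ and $m+1\le M$. This ``independence of essential coordinates'' is where the strong term condition for $\beta$ is used: toggling $\mathbf g_1\leftrightarrow\mathbf g_0$ in coordinate $p_r$ must change some $\rho(T)$, and the strong term condition forces the effect of that toggle to be insensitive to the toggles at the other $p_s$'s, so no two vertices of the cube can coincide. Second, $\breve v_i$ works: if $u_p\notin\breve v_i$ then $\rho(T)=\rho(T\cup\{p\})$ for all $T$, i.e. in $\alg{S}$, for every block $B_i$, every $d_0,d_1\in B_i$, every $a$, and every constant filling of the other blocks, switching coordinate $p$ of $\vec v_i$ from $d_0$ to $d_1$ does not move $t$. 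Using strong abelianness of $\beta$ one upgrades this to the full claim — that no such switch matters even when $\vec v_i$ takes many values in $B_i$ and the other $\vec v_j$ range over full tuples $B_j^{k_j}$ — by repeatedly applying the strong term condition to ``flatten'' the extraneous coordinates and the other blocks down to the standard form recorded by $\rho$.

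The main obstacle is this last upgrade: the strong term condition propagates only equalities, not inequalities, so the reduction of an arbitrary witness of dependence on $u_p$ to the two-valued, constant-other-block form tracked by $\rho$ must be arranged so that the inequality being followed is never on the hypothesis side of a strong-term-condition implication. The device is the ``rectangularity'' consequence that for $\beta$-related parameter tuples $\vec w\equiv_\beta\vec w'$ the maps $\vec b\mapsto t(\dots,\vec b,\dots;\vec w)$ and $\vec b\mapsto t(\dots,\vec b,\dots;\vec w')$ on $B_i^k$ either coincide identically or have disjoint ranges, together with the analogous rectangularity among the coordinates of $\vec v_i$ itself; these let one peel off the coordinates outside $\breve v_i$ and the full tuples in blocks $j\ne i$ one at a time. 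Getting that bookkeeping right — in particular confirming that flattening a coordinate cannot create a spurious dependence — is the only real work in the proof.
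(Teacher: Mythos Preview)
Your proposal is correct and rests on the same two ingredients as the paper's proof: a pigeonhole count against the \((\ell+2)\)-generated free algebra, and one invocation of the strong term condition to conclude that a variable in the symmetric difference of two colliding patterns is globally redundant. The paper's argument is more direct --- it simply observes that if \(k_i > M\) then two of the \(2^{k_i}\) substitution instances \(p_S\) (obtained by filling block \(i\) with two values and collapsing the other blocks to single variables) coincide as \((\ell+2)\)-variable terms, applies strong abelianness once to drop a variable, and inducts down --- whereas you package the same count inside the auxiliary algebra \(\alg{U}\) and argue via injectivity of the cube \(T \mapsto \rho(T)\) on the essential coordinates; the content is the same, and your ``upgrade'' step is exactly the paper's single application of the strong term condition.
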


\begin{proof}
  For simplicity, we show the case \(\ell = 2\). Let \(t(v_0, v_1^1, \ldots, v_1^{k_1}, v_2^1, \ldots, v_2^{k_2}) \) be our term, and let \( 2^{k_1} > \card{\alg{F}_{\var{V}}(4)} \).
	
  For \( S \subset \{ 1, \ldots, k_1 \} \) let \( p_S(v_0, x, y, v_2) \) be the substitution instance of \(t\) obtained by identifying all \(v_2^i\) to the single variable \(v_2\), and substituting \(x\) for \(v_1^i\) if \(i \in S\) and \(y\) if not. Then by Pigeonhole, there exist \( S \neq S' \) so that \( \var{V} \models p_S = p_{S'}\). Say \(k_1 \in S\) but not \(S'\); we claim that no mapping as in (\ref{align:fewvars}) can depend on \( v_1^{k_1} \).

  To see this, let \(a \in A\), \(b,c \in B_1 \), and \(d \in B_2\). Let \( q_S(v_0, x,y,v_1^{k_1}, v_2) \) be like \( p_S \), except that \(v_1^{k_1} \) is left unsubstituted, and likewise for \(q_{S'}\). Then \[q_S(a,b,c,b,d) = q_{S'}(a,b,c,c,d) \]But now since \( \beta\) is strongly abelian, if \( \vec{x} \equiv_\beta b \) and \( \vec{y} \equiv_\beta d\), the strong term condition gives that \[t(a,\vec{x},b,\vec{y}) = t(a,\vec{x},c,\vec{y}) \]so \(t\) is insensitive to changes modulo \(\beta\) in the \(v_1^{k_1}\) coordinate. Similarly, if \( 2^{k_2} > \card{\alg{F}_{\var{V}}(4)} \) then \(t\) is insensitive to changes mod \( \beta\) in some coordinate \(v_2^i\). The general result now follows by a downward induction.
\end{proof}

\begin{lemma}\label{lemma:type 1 bound}
  \( \var{V} \) contains only finitely many subdirectly irreducible finite algebras whose monolith is of unary type.
\end{lemma}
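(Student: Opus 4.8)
The plan is to bound $\card{\alg{S}}$ for a finite subdirectly irreducible $\alg{S}\in\var{V}$ with unary-type monolith $\mu$ in two stages, mirroring the proof of Lemma~\ref{lemma:type 2 bound} but with the module/commutator machinery replaced by $G$-set machinery and the strong term condition. Write $\sigma=\SSRad{\alg{S}}$; by Theorem~\ref{MainThm:sigma strongly abelian}, $\sigma=\Rad{\alg{S}}$ is the centralizer of $\mu$ and is a strongly abelian congruence, and by the transfer principles $\typset{\alg{S}}\subseteq\{1,3\}$. \emph{Stage (i): bound $\card{\alg{S}/\sigma}$.} Since $\var{V}$ is finitely decidable, Corollary~\ref{cor:sigma meet-irreducible} forces the interval $[\Rad{\alg{S}},\top_S]=[\sigma,\top_S]$ to be a chain, so $\sigma$ is either $\top_S$ or meet-irreducible with a unique upper cover. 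In the latter case $\alg{S}/\sigma$ is subdirectly irreducible; its monolith cannot have type $1$ (a type-$1$ cover immediately above $\sigma$ would violate $\sigma=\SSRad{\alg{S}}$) nor type $2$ (excluded from $\typset{\alg{S}}$ by Fact~\ref{fact:basics}), so it has boolean type, and Lemma~\ref{lemma:type 3 bound} bounds $\card{\alg{S}/\sigma}$ by a constant $N_3=N_3(\var{V})$. Let $\ell\le N_3$ be the number of $\sigma$-classes, fix representatives $r_1,\dots,r_\ell$, and put $\vec r=(r_1,\dots,r_\ell)$.

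\emph{Stage (ii): bound the $\sigma$-classes.} Fix a $(\bot_S,\mu)$-minimal set $U=e(S)$ and a trace $N\subseteq U$ containing a monolith pair $\{0,a\}$; since $\mu\le\sigma$ the trace $N$ lies inside one $\sigma$-class. First I would bound $\card{N}$. Since $\sigma$ is abelian, $\TC{\sigma}{\sigma}{\mu}$ holds, so Lemma~\ref{lemma:twins act trivially} applies and, $\HSP{\alg{S}}$ being finitely decidable, forces $\TwinGrp{\alg{S}}{U}$ to act trivially on every trace; moreover replacing each parameter of a polynomial permutation of $N$ by the representative of its $\sigma$-class alters that permutation only by an element of $\TwinGrp{\alg{S}}{U}$, hence not at all on $N$. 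Thus every element of $\PolGrp{\alg{S}}{N}$ is the restriction to $N$ of a $(1+\ell)$-ary term operation evaluated at $\vec r$, so $\card{\PolGrp{\alg{S}}{N}}\le\card{\alg{F}_{\var{V}}(1+\ell)}$; and since $\alg{S}_{|N}$ is term-equivalent to a $G$-set on which (by subdirect irreducibility, because $\mu=\Cg{}{0}{a}$) the group acts transitively, $\card{N}\le\card{\PolGrp{\alg{S}}{N}}$ is bounded. The same representative-substitution device, combined with Proposition~\ref{prop:Pi acts transitively} and the fact that $\TwinGrp{\alg{S}}{U}$ fixes $\mathrm{Body}(U)$ pointwise in a finitely decidable variety, bounds $\card{\mathrm{Body}(U)}$.

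To bound a $\sigma$-class $C$, I would coordinatize each $c\in C$ by the tuple of values $w(c,\vec r)$ with $w$ ranging over those elements of $\alg{F}_{\var{V}}(1+\ell)$ whose value on $C$ stays inside a fixed trace, and show the resulting map injects $C$ into a bounded power of a bounded set. For $c\ne d$ in $C$, subdirect irreducibility and minimal-set theory supply a polynomial $p(x)=e(t(x,\vec s))$ with $p(c)=0$, $p(d)=a$; because $\sigma$ is strongly abelian and $c\equiv_\sigma d$, the strong term condition in contrapositive form (shifting the distinguished variable between $c$ and $d$, and shifting the $\sigma$-block variables between $\vec s$ and its representative-slotted copy) shows that the $(1+\ell)$-ary term $w$ obtained from $t$ and $e$ by slotting each parameter into the variable of its $\sigma$-class satisfies $w(c,\vec r)\ne w(d,\vec r)$, while the strong term condition over $\mu$ together with $\TC{\sigma}{\mu_{|U}}{\bot}$ (Lemma~\ref{lemma:sigma centralizes minimal sets}) shows that $w(\cdot,\vec r)$ is constant modulo $\mu$ on $C$, so its values there lie in a single $\mu$-class. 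Lemma~\ref{lemma:few independent variables} enters to keep the relevant arity bounded, since the natural parameter tuples enumerate entire $\sigma$-classes. The resulting injection bounds $\card{C}$, whence $\card{\alg{S}}=\card{\alg{S}/\sigma}\cdot\max_C\card{C}$ is bounded, proving the lemma (and, with Lemmas~\ref{lemma:type 3 bound} and~\ref{lemma:type 2 bound}, Theorem~\ref{MainThm:residual bound}).

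I expect the main obstacle to be this image-control. In the affine case the separating polynomial could be retracted onto a single trace using the group subtraction, but a type-$1$ minimal set in a finitely decidable variety may have a nonempty tail, so there is no retraction of $U$ onto $N$; one must instead show that the coordinatizing values $w(x,\vec r)$, though not a priori confined to $U$, land in a $\mu$-class whose size is controlled — via the body bound or a further appeal to Maróti's Lemma (Lemma~\ref{lemma:Maroti}) — all while preserving the separation of $c$ from $d$. Fitting together these uses of the strong term condition, $\TC{\sigma}{\mu_{|U}}{\bot}$, Maróti's Lemma, and Lemma~\ref{lemma:few independent variables} consistently is the delicate part of the argument.
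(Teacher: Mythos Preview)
Your Stage (i) is correct and matches the paper exactly. Stage (ii), however, has a genuine gap at precisely the point you flag: image control. Your coordinatization $c \mapsto (w(c,\vec r))_w$ injects $C$ into $S^{|\alg{F}_{\var V}(1+\ell)|}$, which is circular unless you restrict to terms $w$ whose values on $C$ lie in a set of size bounded independently of $\alg{S}$. You propose restricting to $w$ whose image on $C$ stays inside a fixed trace, but you never show that every pair $c\ne d$ in $C$ is separated by such a $w$. The separating polynomial $p$ with $p(c)=0$, $p(d)=a$ hits $N$ at $c$ and $d$, but after replacing its parameters by $\sigma$-representatives the resulting $w(\cdot,\vec r)$ need not remain in $N$ (or even in a single $\mu$-class) on the rest of $C$; your invocations of $\TC{\sigma}{\mu_{|U}}{\bot}$ and Mar\'oti's Lemma control polynomials into $U$ under $\mu$-shifts of a single variable, not the image of an entire $\sigma$-class under an arbitrary polynomial. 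The preliminary bounds on $|N|$ and $|\mathrm{Body}(U)|$ are therefore not yet connected to the main injection.

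The paper sidesteps this entirely by coordinatizing with \emph{sets of terms} rather than tuples of values. After using Lemma~\ref{lemma:few independent variables} to cut the arity to $1+\ell M$ (but \emph{without} slotting representatives---parameters still range over $P=B_1^M\times\cdots\times B_\ell^M$), one sets
\[ G(b)=\{\,t\in\alg{F}_{\var V}(1+\ell M):\ \exists\,\vec p\in P,\ t(b,\vec p)=c\,\} \]
and proves $b\mapsto G(b)$ injective. If $G(b_1)=G(b_2)$ with $b_1\ne b_2$, pick $t,\vec p_1$ with $c=t(b_1,\vec p_1)\ne t(b_2,\vec p_1)$; then $t\in G(b_2)$ supplies $\vec p_2$ with $t(b_2,\vec p_2)=c$, and the $2\times 2$ array $t(b_i,\vec p_j)$ has equal diagonal but unequal first column, violating the strong abelianness of $\sigma$ directly. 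This gives $|B|\le 2^{|\alg{F}_{\var V}(1+\ell M)|}$ with no need to bound any trace, body, $\mu$-class, or twin group.
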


\begin{proof}
  Let \( \alg{S} \in \var{V} \) be subdirectly irreducible with unary-type monolith \[ \mu = \Cg{\alg{S}}{c}{d}\]We already know that \( \typset{\alg{S}} \subset \{1,3\} \). By Theorem \ref{MainThm:sigma strongly abelian}, the strongly solvable radical \(\sigma\) is a strongly abelian congruence. Either \( \sigma = \top_S \) or, by Corollary \ref{cor:sigma meet-irreducible}, \(\sigma\) is meet-irreducible with upper cover of boolean type. In either case, \[ \ell := \card{\alg{S}/\sigma} \leq M_{\mathrm{bool}} \](where \( M_{\mathrm{bool}} \) denotes the maximum cardinality of a finite SI in \( \var{V} \) with boolean-type monolith). Fix some enumeration \( \langle \vec{s}_1, \ldots, \vec{s}_\ell \rangle \) of \(S\) with each \( \sigma \)-block \(B_i\) enumerated together. We must now put a uniform bound on the size of \( \sigma \)-blocks.
	
  Let \(B\) be any \(\sigma\)-block. Since any unequal pair of elements generates a congruence above \(\mu\), we have that for any \( b \neq b' \in B \), there exists a unary polynomial \( p(v_0) = t(v_0, \vec{s}_1, \ldots, \vec{s}_\ell) \) such that \( p(b) = c \; \mathrm{iff} \; p(b') \neq c \). By Lemma \ref{lemma:few independent variables}, these terms depend (up to changes mod \(\sigma\)) on \(v_0\) and subsets \( \breve{s}_i \), \( 1 \leq i \leq \ell \), each of size no more than \( M := \log(\alg{F}_{\var{V}}(\ell + 2)) \). Let \( P = B_1^M \times \cdots \times B_\ell^M \).
	
  For \(b \in B\), we define a subset \( G(b) \subset \alg{F}_{\var{V}}(1 + \ell M)\) to consist of those terms \(t(x,\vec{y}) \) such that for some \( \vec{p} \in P \), \( t(b,\vec{p}) = c \).
	
  \begin{claim}
    The mapping \( b \mapsto G(b) \) is injective.
  \end{claim}We will be done once we have established the claim, since then \[ \card{B} \leq 2^{\card{\alg{F}_{\var{V}}(1 + \ell M)}} \]which is uniformly bounded in \( \var{V} \).
	
  To prove the claim, let \(b_1 \neq b_2\), and assume towards a contradiction that \( G(b_1) = G(b_2)\). At least one, and hence both, must be nonempty. Choose a term \(t\) and a \( \vec{p}_1 \in \Sigma \) so that \( c = t(b_1, \vec{p}_1) \neq t(b_2, \vec{p}_1 ) \). Then \( t \in G(b_1) = G(b_2) \), so we can choose \( \vec{p}_2 \in \Sigma \) so that \( t(b_2, \vec{p}_2) = c \). Hence we have a failure \[ \begin{matrix} c = t(b_1, \vec{p}_1) & t(b_1, \vec{p}_2) \\c \neq t(b_2, \vec{p}_1) & t(b_2, \vec{p}_2) = c \end{matrix} \]of the strong term condition, since the entries are equal along the diagonal but not along the rows and columns. This contradicts the strong abelianness of \(\sigma\).
\end{proof}

\begin{proof}[Proof of Theorem \ref{MainThm:residual bound}]
  Since \(\var{V} \) is locally finite, it is enough to prove that \( \var{V} \) contains only finitely many finite subdirectly irreducible algebras. (It is a well-known result, originally due to Quackenbush, that an infinite SI algebra in a locally finite variety has arbitrarily large finite SI subalgebras generated by a monolith pair together with other elements.) Since \( \var{V} \) is finitely decidable, it omits the semilattice and lattice types altogether; and Lemmas \ref{lemma:type 3 bound}, \ref{lemma:type 2 bound}, and \ref{lemma:type 1 bound} combine to show that there are only finitely many SIs in \( \var{V} \) with monoliths of the boolean, affine, or unary types.
\end{proof}

%
%


\begin{thebibliography}{MMT87}

\bibitem[Ber11]{Bergman11}
\bibname{Clifford Bergman}, \emph{Universal algebra: Fundamentals and selected topics},
  Chapman and Hall, Boca Raton, FL, 2011.

\bibitem[BS81]{Burris_Sanka}
\bibname{Stanley Burris \and Hantamantagouda Sankappanavar}, \emph{A course in universal
  algebra}, Graduate Texts in Mathematics, vol.~78, Springer-Verlag, New York,
  1981.

\bibitem[Ers72]{Ershov72}
\bibname{Yuri Ershov}, 'Elementary theories of groups', Doklady Akademii Nauk SSSR
  \textbf{203} (1972), 1240--1243.

\bibitem[FM87]{Freese_McKenzie}
\bibname{Ralph Freese \and Ralph McKenzie}, \emph{Commutator theory for congruence modular
  varieties}, Cambridge University Press, 1987.

\bibitem[HM88]{Hobby_McKenzie}
\bibname{David Hobby \and Ralph McKenzie}, \emph{The structure of finite algebras},
  Contemporary Mathematics, vol.~76, American Mathematical Society, Providence, RI, 1988.

\bibitem[Hod93]{Hodges}
\bibname{Wilfrid Hodges}, \emph{Model theory}, vol.~42, Cambridge University Press, 1993.

\bibitem[Idz86]{KIdziak86}
\bibname{Katarzyna Idziak}, 'Undecidability of {B}rouwerian semilattices', Algebra
  Universalis \textbf{22} (1986), 298--301.

\bibitem[Idz89a]{Idziak89i}
\bibname{Pawel Idziak}, 'Varieties with decidable finite algebras {I}: Linearity',
  Algebra Universalis \textbf{26} (1989), 234--246.

\bibitem[Idz89b]{Idziak89ii}
\bibname{Pawel Idziak}, 'Varieties with decidable finite algebras {II}: Permutability',
  Algebra Universalis \textbf{26} (1989), 247--256.

\bibitem[Idz97]{Idziak97}
\bibname{Pawel Idziak}, 'A characterization of finitely decidable congruence modular
  varieties', Transactions of the American Mathematical Society \textbf{349}
  (1997), no.~3, 903--934.

\bibitem[II88]{K+PMIdziak88}
\bibname{Katarzyna Idziak \and Pawel Idziak}, 'Decidability problem for finite
  {H}eyting algebras', Journal of Symbolic Logic \textbf{53} (September 1988),
  no.~3, 729--735.

\bibitem[IMV09]{IMV_Few_Models}
\bibname{Pawel Idziak \and Ralph McKenzie \and Matthew Valeriote}, 'The structure of
  locally finite varieties with polynomially many models', Journal of the
  American Mathematical Society \textbf{22} (2009), no.~1, 119--165.

\bibitem[IV01]{Idziak_Valeriote01}
\bibname{Pawel Idziak \and Matthew Valeriote}, 'A property of the solvable radical in
  finitely decidable varieties', Fundamenta Mathematicae \textbf{170} (2001),
  69--86.

\bibitem[Jeo99]{Jeong99}
\bibname{Joohee Jeong}, 'A decidable variety that is finitely undecidable', The
  Journal of Symbolic Logic \textbf{64} (1999), no.~2, 651--677.

\bibitem[Kea91]{Kearnes91}
\bibname{Keith~A Kearnes}, 'Residual bounds for varieties of modules', Algebra
  Universalis \textbf{28} (1991), no.~3, 448--452.

\bibitem[Kea93]{Kearnes93}
\bibname{Keith Kearnes}, 'An order-theoretic property of the commutator',
  International Journal of Algebra and Computation \textbf{3} (1993), no.~4,
  491--533.

\bibitem[Mal65]{Malcev60}
\bibname{A.I. Malcev}, 'On a correspondence between rings and groups', AMS
  Translations \textbf{45} (1965), 221--231, Russian original article in
  Mathematics: Sbornik 50 (1960).

\bibitem[MMT87]{McKenzie_McNulty_Taylor}
\bibname{Ralph McKenzie \and George McNulty \and Walter Taylor}, \emph{Algebras, lattices,
  varieties}, vol.~I, Wadsworth \& Brooks Cole, 1987.

\bibitem[MS05]{Snow_McKenzie}
\bibname{Ralph McKenzie \and John Snow}, 'Congruence modular varieties: commutator
  theory and its uses', Structural Theory of Automata, Semigroups, and
  Universal Algebra \textbf{207} (2005), 273--329.

\bibitem[MV89]{McKenzie_Valeriote}
\bibname{Ralph McKenzie \and Matthew Valeriote}, \emph{The structure of decidable locally
  finite varieties}, Progress in Mathematics, Birkh\"auser, Boston, 1989.

\bibitem[Ols91]{Olshanskii91}
\bibname{Alexander~Yu. Olshanskii}, \emph{Geometry of defining relations in groups},
  Mathematics and its applications (Soviet series), vol.~70, Kluwer Academic
  Publishers, Dordrecht, 1991.

\bibitem[Rot00]{Rothmaler00}
\bibname{Philipp Rothmaler}, \emph{Introduction to model theory}, Taylor and Francis, New York, 2000.

\bibitem[Szm55]{Szmielew55}
\bibname{Wanda Szmielew}, 'Elementary properties of abelian groups', Fundamenta
  Mathematicae \textbf{41} (1955), 203--271.

\bibitem[Val94]{Valeriote94}
\bibname{Matthew Valeriote}, 'On solvable congruences in finitely decidable
  varieties', Mathematical Logic Quarterly \textbf{40} (1994), 398--414.

\bibitem[VW92]{Valeriote_Willard}
\bibname{Matthew Valeriote \and Ross Willard}, 'Some properties of finitely decidable
  varieties', International Journal of Algebra and Computation \textbf{2}
  (1992), no.~1, 89--101.

\bibitem[Zam76]{Zamyatin76}
\bibname{A.P. Zamyatin}, 'Varieties of associative rings whose elementary theory is
  decidable', Doklady Akademii Nauk SSSR \textbf{229} (1976), no.~2, 276--279.

\bibitem[Zam78]{Zamyatin78}
\bibname{A.P. Zamyatin}, 'A non-abelian variety of groups has an undecidable elementary
  theory', Algebra i Logika \textbf{17} (1978), no.~1, 20--27.
\end{thebibliography}
\end{document}